\documentclass[10pt]{amsart}
\usepackage[utf8]{inputenc}
\usepackage{graphicx,amssymb,amsmath,amsthm,wrapfig}
\usepackage{caption}
\usepackage{subcaption}
\usepackage{indentfirst}
\usepackage{cancel}
\usepackage{lipsum}
\usepackage{epstopdf}
\usepackage{epsfig}
\usepackage[normalem]{ulem}
\usepackage{soul}
\usepackage{comment}
\usepackage{enumerate,color}   
\usepackage[left=1in,right=1in,top=1in,bottom=.8in]{geometry}
\usepackage[overload]{empheq}

\newcommand{\veps}{\varepsilon}
\newcommand{\ve}{\varepsilon}
\newcommand{\U}{\overline{U}}

\newcommand{\dd}{\partial}

\newcommand{\uuu}{\mathfrak{u}}
\newcommand{\wt}{\widetilde}

\numberwithin{equation}{section}
\usepackage{soul}
\everymath{\displaystyle}
\usepackage{todonotes}

\makeatletter

\renewcommand\subsubsection{\@secnumfont}{\bfseries}%
\renewcommand\subsubsection{\@startsection{subsubsection}{3}
  \z@{.5\linespacing\@plus.7\linespacing}{-.5em}%
  {\normalfont\bfseries}}
  
  \makeatother

 %
%
%
%
%
%
%

\theoremstyle{plain}
\newtheorem{theorem}{Theorem}[section]
\newtheorem{lemma}[theorem]{Lemma}
\newtheorem{proposition}[theorem]{Proposition}

\theoremstyle{definition}


\newtheorem*{defi*}{Definition}
\theoremstyle{remark}
\newtheorem{remark}{Remark}

\theoremstyle{remark}

\usepackage{etoolbox}
\makeatletter
\let\alignts@preamble\align@preamble
\patchcmd{\alignts@preamble}{\displaystyle}{\textstyle}{}{}
\patchcmd{\alignts@preamble}{\displaystyle}{\textstyle}{}{}

\def\alignts{\let\align@preamble\alignts@preamble\start@align\@ne\st@rredfalse\m@ne}

\makeatother

\allowdisplaybreaks

\title[Formation of singularities for the Euler-Poisson system]{Delta-shock for the pressureless Euler-Poisson system}

\author[J. Bae]{Junsik Bae}
\address[JB]{Department of Mathematical Sciences, Ulsan National Institute of Science and Technology, Ulsan, 44919, Korea}
\email{junsikbae@unist.ac.kr}
 
 \author[Y. Kim]{Yunjoo Kim}
\address[YK]{Department of Mathematical Sciences, Ulsan National Institute of Science and Technology, Ulsan, 44919, Korea}
\email{gomuli3@unist.ac.kr}

\author[B. Kwon]{Bongsuk Kwon}
\address[BK]{Department of Mathematical Sciences, Ulsan National Institute of Science and Technology, Ulsan, 44919, Korea}
\email{bkwon@unist.ac.kr}

\date{\today}

\subjclass{Primary: 	35Q35,  35Q53   Secondary:  	35Q31, 76B25}



\begin{document}

\begin{abstract}
%
%
%
We study singularity formation for the pressureless Euler-Poisson system of cold ion dynamics.
In contrast to the Euler-Poisson system with pressure, when its smooth solutions experience $C^1$ blow-up, the $L^\infty$ norm of the density becomes unbounded, which is often referred to as a delta-shock. We provide a constructive proof of singularity formation to obtain an exact blow-up profile and the detailed asymptotic behavior of the solutions near the blow-up point in both time and space. Our result indicates that at the blow-up time $t=T_\ast$, the density function is unbounded but is locally integrable with the profile of $\rho(x,T_\ast) \sim (x-x_*)^{-2/3}$ near the blow-up point $x=x_\ast$. This profile is not yet a \emph{Dirac measure}. 
On the other hand, the velocity function has $C^{1/3}$ regularity at the blow-up point. Loosely following our analysis, we also obtain an exact blow-up profile for the pressureless Euler equations. 

\noindent{\it Keywords}:
Euler-Poisson system; Boltzmann-Maxwell relation; Cold ion; Singularity; Delta-shock
\end{abstract}

\maketitle 

\section{Introduction}
We consider the pressureless Euler-Poisson system  in a non-dimensional form:
\begin{subequations}\label{EP-p}
	\begin{align}
		& \rho_t +  (\rho u)_x = 0, \label{EP_1-p} 
		\\ 
		& u_t  + u u_x = -  \phi_x, \label{EP_2-p} 
		\\
		& - \phi_{xx} = \rho - e^\phi, \label{EP_3-p} 
	\end{align}
\end{subequations} 
where $\rho>0$, $u$ and $\phi$ are the unknown functions of $(x,t) \in \mathbb{R}\times [-\ve, \infty)$ representing the ion density, the fluid velocity for ions, and  the electric potential, respectively. 
The pressureless Euler-Poisson system \eqref{EP-p} is a fundamental fluid model describing the dynamics of cold ions in an electrostatic plasma. 
In fact, \eqref{EP-p} is an ideal model for cold ions, for which  the ion pressure is neglected due to the assumption that   the ion temperature $T_i$
is much lower than the electron temperature $T_e$. 
We refer to \cite{Ch,Dav} for more detailed physicality of the model.

A question of global existence or finite-time blow-up of smooth solutions to \eqref{EP-p} naturally arises in
the study of the large-time dynamics of the Euler-Poisson system, for instance the stability of solitary waves \cite{HS}.
To our knowledge, no global existence result has been obtained except for the curl-free flow in 3D, see \cite{GP}. 
 In general, it is known that smooth solutions to nonlinear hyperbolic equations fail to exist globally in time when the gradient of initial velocity is negatively
large, \cite{J, L, TL}. 
For \eqref{EP-p}, it is proved that  under suitable conditions on the initial data, smooth solutions may leave the class of  $C^1$ in  finite time,  see   \cite{BCK, CKKT, Liu}. 
In fact, the study of  \cite{BCK, CKKT} proposes some sufficient conditions for the $C^1$ blow-up, that do not require the initial velocity to have a negatively large gradient. 
Due to the nature of the method of characteristics, only limited information about the blow-up solutions to \eqref{EP-p} is obtained, for instance, $
\| \rho(\cdot,t) \|_{L^\infty} \to \infty$  and  $\|u_x(\cdot,t)\|_{L^\infty}\to\infty$ as $t\to T_\ast$, 
%
%
%
%
%
where $T_\ast$ is the life span of the smooth solution. 
This $L^\infty$ norm blow-up for the density function, sometimes referred to as the delta-shock, does not occur in the presence of pressure for the Euler-Poisson system, see \cite{BCK, BKK}. 
For this exotic blow-up behavior, no detailed information, such as the spatial configuration, is obtained. 

In this paper, we provide a constructive proof of  singularity formation to obtain an exact blow-up profile and the asymptotic behavior of the blow-up solutions. 
More specifically, we find that the solution $\rho$ blows up like $\rho(x,t)  \sim (T_\ast-t + (x-x_\ast)^{2/3})^{-1}$ asymptotically near the blow-up point $(x_\ast, T_\ast)$, while the velocity $u(\cdot,t)$ has  $C^{1/3}$ regularity at $t=T_\ast$, i.e., a cusp-type singularity. 
To show this, we borrow the ideas of \cite{BSV, BSV2}, which are used to show the singularity formation for the multi-D Euler equations. 
Specifically we establish the global stability estimates in the self-similar variables by utilizing the self-similar blow-up profile for the Burgers equation 
\begin{equation}\label{1D-Burgers}
	\partial_t \uuu + \uuu \partial_x \uuu =0. 
\end{equation}

\subsection{Self-similar blow-up profile of the Burgers equation}
Introducing the self-similar variables
\begin{equation*}\label{change_var-1}
	y\left(x,t\right) := \frac{x}{\left( -t\right)^{3/2}}, \quad s(t) :=-\log\left(  - t \right), 
\end{equation*}
and a new unknown function $\mathfrak{U}(y,s)$ defined by  
\begin{equation*}\label{WZPhi-1}
	\mathfrak{U}(y,s) :=   e^{s/2}\uuu(x,t), 
\end{equation*}
we see from \eqref{1D-Burgers} that $\mathfrak{U}(y,s)$ satisfies
\begin{equation}\label{W-eq-0-1}
	\left(\partial_s-\frac{1}{2}\right)\mathfrak{U}+\left( \frac{3}{2} y+\mathfrak{U}\right)\mathfrak{U}_y =0. 
\end{equation}
It is known, see \cite{EF} for instance, that \eqref{W-eq-0-1} admits a  \emph{stable}  steady solution $\overline{U}(y)$ satisfying 
\begin{equation}\label{Burgers_SS}
	-\frac12 \overline{U}(y) + \frac32 y \overline{U}'(y) + \U (y) \U'(y) =0.
\end{equation}
Multiplying \eqref{Burgers_SS} by $\overline{U}^{-4}$
and integrating the resultant  with respect to $y$,  we find that 
\begin{equation}\label{Burgers_SS'}
	y+\overline{U} + c \overline{U}^3 =0,
\end{equation}
where    $c>0$ is a constant of integration. 
Thanks to the space-amplitude scaling invariance,   we can set $c=1$ (see Section~\ref{scaling-IC} for more details). Then, it is easy to check that      
\begin{equation*}
	y^{-1/3} \overline{U}(y) \to  -1 \quad \text{ and } 	\quad |y^{2/3} \overline{U}'(y)|\rightarrow \frac{1}{3}
	\quad 
	\text{as } |y| \rightarrow \infty, 
\end{equation*}
and 
\begin{equation*}
	\overline{U}(0)=0, \quad \overline{U}'(0)=-1, \quad \overline{U}''(0)=0, \quad \overline{U}'''(0)=6, \quad \overline{U}^{(4)}(0)=0. 
\end{equation*}
In fact, an explicit form of $\overline{U}$ can be obtained by directly solving \eqref{Burgers_SS}. 
However, it is not convenient to use it in our analysis  due to its complicated form. Instead, we make use of the quantitative properties of $\overline{U}$ which can be easily obtained from \eqref{Burgers_SS'}. We derive and give a list of the properties of $\overline{U}$ in Section~\ref{inequalities}.

%

\subsection{Assumptions on the initial data.}\label{Initial_subs}
We describe the initial conditions that lead to the finite-time $C^1$ blow-up. For simplicity, let the initial time $t=-\veps$, where $\veps$ is to be chosen sufficiently small. I.e., we consider the initial value problem for \eqref{EP-p} in $[-\veps,\infty)\times \mathbb{R}$, with the initial data
\begin{equation}\label{in-H-C}
	(\rho_0-1,u_0)(x) := (\rho-1,u)(x,-\veps)\in H^4(\mathbb{R})\times H^5(\mathbb{R}) \subset C^3(\mathbb{R})\times C^4(\mathbb{R}).
\end{equation}

We shall focus on the case in which $\partial_xu_0$ is negatively large and $\partial_xu_0$ attains its non-degenerate global minimum at $x=0$. More specifically, thanks to the structure of \eqref{EP-p}, without loss of generality,  we assume that 
\begin{equation}
	u_0(0)= 0, \quad \partial_xu_0(0)=-\veps^{-1}, \quad \partial_x^2u_0(0)=0, \quad \partial_x^3u_0(0)=6\veps^{-4}. \label{init_w_3-p}
\end{equation}
See Appendix~\ref{scaling-IC} for more details on how the initial conditions can be relaxed to \eqref{init_w_3-p}. 
We remark that  \eqref{init_w_3-p}  ensures that the new variable $U$, which will be defined in \eqref{WZPhi-p}, satisfies $\partial_y^nU(0,s_0)=\overline{U}^{(n)}(0)$ for $n=0,1,2,3$. Here,  $\overline{U}$ is the stable self-similar blow-up profile to the Burgers equation defined in \eqref{Burgers_SS}.

 We also assume that the first four derivatives of $u_0$ have the bounds as  
\begin{equation}\label{init_24-p}
	\|\partial_x u_0\|_{L^{\infty}}\leq \veps^{-1},\quad
	\|\partial_x^2u_0\|_{L^{\infty}}\leq \veps^{-5/2},\quad \|\partial_x^3u_0\|_{L^{\infty}}\leq 7\veps^{-4}, \quad \|\partial_x^4u_0\|_{L^{\infty}}\leq \veps^{-11/2}.
\end{equation}	
Furthermore, we impose the conditions concerning the asymptotic behavior of $\partial_xu_0$ and $\partial_x \rho_0$ 
near $x=0$ and $x=\infty$ as 
\begin{equation} \label{4.3a-p}
	\left|\varepsilon(\partial_x u_0)(x)-\overline{U}'\left(\frac{x}{\varepsilon^{3/2}}\right)\right|\leq \min\left\{\frac{\left(\frac{x}{\varepsilon^{3/2}}\right)^2}{40\left(1+\left(\frac{x}{\varepsilon^{3/2}}\right)^2\right)}, \frac{22}{25\left(8+\left(\frac{x}{\veps^{3/2}}\right)^{2/3}\right)}\right\}, 
\end{equation}
 
\begin{equation} \label{4.3a2-p}
	\limsup_{|x|\rightarrow \infty}  \left| x^{2/3} \partial_xu_0(x) \right| \leq  \frac{1}{2}, 
\end{equation}
and 
\begin{equation}
\veps|\rho_0(x)-1|\leq \frac{A}{	2\left( 8+\left(\frac{x}{\veps^{3/2}}\right)^{2/3} \right) }, \qquad \inf_{x\in\mathbb{R}}\rho_0(x)>0, \label{init_ttP-p}
\end{equation}
where 
\begin{equation}\label{Aeq}
	A:=\min\left\{\frac{1}{8\sup_{y\in\mathbb{R}} I}, \frac{ e^{-1/4}}{(\sup_{y\in\mathbb{R}} I)^2}\right\} \quad \text{with} \quad I(y):=  (y^{2/3} +1) \int_{-\infty}^\infty \frac{e^{-|y-y'|}}{ |y'|^{2/3}}
	dy'.
\end{equation} 
It is easy to check that $\sup_{y\in\mathbb{R}}I(y)>0$ is finite.

We remark that \eqref{4.3a-p}--\eqref{init_ttP-p} are the conditions of localization, which ensure that the singularity develops along the modulation curve where $u$ keeps its steepest gradient. 
In our analysis utilizing the maximum principle for the transport-type equations, the spatial localization of the solutions is essential to obtain global stability estimates. 
We also remark that these conditions allow $(\rho_0-1,u_0)$ to decay fast enough to $(0,0)$ as $|x| \to \infty$ so that it can also belong to appropriate Sobolev spaces $H^k(\mathbb{R})$. This is consistent with \eqref{in-H-C}.


	In what follows, $C^\beta(\Omega)$  denotes  the H\"older space with exponent $\beta$, and $[u]_{C^{\beta}(\Omega) }$ is the associated H\"older norm as 
\begin{equation*}
	[u]_{C^{\beta}(\Omega) } := \sup_{x,y\in \Omega, x\ne y} \frac{|u(x) - u(y) | }{|x-y|^{\beta}}.
\end{equation*} 
%
		\begin{theorem}\label{mainthm1}
	There is a constant $\ve_0>0$
	such that for each $\veps\in(0,\veps_0)$, if the initial data $(\rho_0-1, u_0)\in H^4(\mathbb{R})\times H^5(\mathbb{R})$ satisfies \eqref{in-H-C}--\eqref{init_ttP-p},
	then there is a unique smooth solution 
	$(\rho,u)\in ( C^3\times C^4)  \left([-\veps,T_\ast) \times \mathbb{R}\right)$
	to \eqref{EP-p} whose the maximal existence time $T_\ast$ is finite.  Furthermore, it holds that
	\begin{enumerate}[(i)]
		\item $\sup_{t<T_\ast} \left[ u(\cdot, t) \right]_{C^{\beta}}<\infty \quad \text{for}\quad \beta\leq 1/3$, $\qquad  \lim_{t\rightarrow T_\ast} \left[ u (\cdot, t) \right]_{C^{\beta}}=\infty \quad \text{for}\quad \beta> 1/3$;
		\item for $\beta>1/3$, the temporal blow-up rate is given by 
		\ \begin{equation*} \left[ u(\cdot, t) \right]_{C^\beta}\sim (T_\ast-t)^{-\frac{3\beta-1}{2}}
		\end{equation*} for all $t$ sufficiently close to $T_\ast$\footnote{Here $A(t) \sim B(t) $  means $C^{-1} B(t) \le A(t) \le C B(t)$ for some $C>0$ independent of $t$};
		\item 
the density function $\rho(x,t)$ blows up as   
		\begin{equation}\label{den-profile}
			|\rho(x,t)-1| \sim \frac{1}{(T_\ast-t) + (x-x_*)^{2/3}}
		\end{equation} 
		for all $(x,t)$ sufficiently close to $(x_*,T_\ast)$. Here $x_\ast$ is the  blow-up location, which is given by $x_\ast:= \xi(T_\ast)$ with $\xi$ defined in \eqref{modulation-new}--\eqref{Modul_init-p}. 
	\end{enumerate} 
\end{theorem}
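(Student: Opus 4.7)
The plan is to adapt the self-similar blow-up analysis of Buckmaster--Shkoller--Vicol to the Euler--Poisson coupling, treating the Poisson potential as a controlled perturbation of the pure Burgers evolution. First I introduce time-dependent modulation parameters $\xi(t)$, $\tau(t)$, $\kappa(t)$ tracking respectively the blow-up location, the time-to-blow-up, and the velocity amplitude, and pass to the self-similar variables
\[
y := \frac{x - \xi(t)}{(\tau(t) - t)^{3/2}}, \qquad s := -\log(\tau(t) - t),
\]
with rescaled unknowns $U(y,s) := e^{s/2}(u(x,t) + \kappa(t))$ together with corresponding rescalings of $\rho$ and $\phi$. The equation for $U$ is then a forced version of \eqref{W-eq-0-1}, the forcing being the rescaled electric field. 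The modulation is pinned down by five constraints of the form $U(0,s) = 0$, $\partial_y U(0,s) = -1$, $\partial_y^2 U(0,s) = 0$ together with orthogonality conditions fixing $\xi$ and $\tau$; these produce ODEs for $(\xi, \tau, \kappa)$. The normalization \eqref{init_w_3-p} is tailored so that at the initial rescaled time $s_0 = -\log\veps$, the profile $U(\cdot,s_0)$ matches $\overline{U}$ up to third order at $y=0$, providing the bootstrap initialization.

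The core of the argument is a bootstrap on $s \in [s_0, S]$. I would define a bootstrap set encoding: pointwise and weighted $L^\infty$ proximity of $U$ to $\overline{U}$ with decay in $(1 + y^2)^{-1}$ matching \eqref{4.3a-p}; a $\limsup$-type bound in $y^{2/3}$ matching \eqref{4.3a2-p}; an $I$-weighted bound on the rescaled density fluctuation using \eqref{Aeq}; and bounds on $\partial_y^k U$ up to fourth order. The damping from the $-\tfrac12$ coefficient in \eqref{W-eq-0-1}, together with the transport structure of \eqref{EP_1-p}--\eqref{EP_2-p}, drives each inequality to contract along the self-similar characteristics via the maximum principle. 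The Poisson contribution is handled by solving $-\phi_{xx} = \rho - e^\phi$ semilinearly: the Bessel-type Green's kernel is what produces the $I(y)$-weight, and the constant $A$ in \eqref{Aeq} is calibrated so that $\phi_x$ is reabsorbed into the damping budget for $U$. The modulation ODEs close by a continuity argument keeping $\xi$, $\tau$, $\kappa$ small perturbations of their limits.

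Once the bootstrap closes globally in $s$, the original time interval $[-\veps, T_\ast)$ corresponds to $s \in [s_0, \infty)$ with $T_\ast := \lim_{t \nearrow T_\ast} \tau(t)$ finite and $x_\ast := \lim_{t \nearrow T_\ast}\xi(t)$. For (i)--(ii), the identity $[u(\cdot,t)]_{C^\beta} = e^{(3\beta - 1)s/2}[U(\cdot,s)]_{C^\beta}$ combined with $\overline{U}(y) \sim -y^{1/3}$ --- so that $[\overline{U}]_{C^{1/3}} < \infty$ but $[\overline{U}]_{C^\beta} = \infty$ for $\beta > 1/3$ --- and the $C^\beta$ proximity $U \approx \overline{U}$ from the bootstrap yield both the Hölder regularity and the rate $(T_\ast - t)^{-(3\beta - 1)/2}$. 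For (iii), integrating \eqref{EP_1-p} along characteristics gives $\rho(x,t) = \rho_0(X_0)/\partial_x X_0$ where $X_0$ is the inverse Lagrangian map; the Jacobian inherits the self-similar scaling $\partial_x X_0 \sim e^{-s}(1 + |y|^{2/3}) \sim (T_\ast - t) + (x - x_\ast)^{2/3}$, yielding \eqref{den-profile}.

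The main obstacle is closing the bootstrap in the presence of the nonlocal, nonlinear Poisson coupling, since $\rho$ itself diverges in $L^\infty$ yet must feed back through $\phi_x$ without disturbing the Burgers-type mechanism. The saving grace is that $\phi_x$ is one spatial derivative smoother than $\rho$ and decays in the $I$-weighted norm, and the smallness of $A$ in \eqref{Aeq} is precisely what makes the Poisson forcing lose to the Burgers damping in every estimate. The weighted estimates for $\partial_y^3 U$ and $\partial_y^4 U$ --- where the damping weakens and commutators involving $\phi$ proliferate --- will require the most delicate bookkeeping, and this is where the calibration of the initial conditions \eqref{init_24-p}--\eqref{init_ttP-p} must be exploited most carefully.
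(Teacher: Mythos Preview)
Your outline matches the paper's strategy: modulated self-similar variables, a bootstrap around $\overline U$ closed by weighted maximum-principle arguments, and then reading off (i)--(iii) from the global-in-$s$ estimates. A few corrections are worth flagging. First, with three modulation parameters $(\tau,\kappa,\xi)$ there are exactly three pointwise constraints $U(0,s)=0$, $U_y(0,s)=-1$, $U_{yy}(0,s)=0$, not five, and no orthogonality conditions enter; the modulation ODEs are obtained simply by evaluating the $U$, $U_y$, $U_{yy}$ equations at $y=0$. Second, your intuition about where the bootstrap is hardest is inverted: the $\partial_y^4 U$ estimate is the easiest because the damping $\tfrac{11}{2}+5U_y/(1-\dot\tau)\ge\tfrac14$ is uniformly positive, whereas the delicate closures are the weighted first-derivative bounds on $U_y-\overline U'$ near $y=0$, where the damping $1+2\overline U'$ degenerates and one must borrow positivity from the weight and from explicit inequalities for $\overline U$. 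Third, for (iii) the paper does not pass through the Lagrangian Jacobian as you propose; it works directly with the rescaled density $P:=e^{-s}(\rho-1)$, for which the bootstrap already supplies the upper bound $(y^{2/3}+8)|P|\le A$, and the matching lower bound is obtained by integrating the $P$-equation along the characteristic $y\equiv 0$ (using $U_y(0,s)=-1$) to get $P(0,s)\ge\Lambda>0$ for large $s$, then extending to $|y|\le h$ via the uniform bound on $\|P_y\|_{L^\infty}$. Your Jacobian route is in principle viable but would require an additional step translating the $U_y$ asymptotics into Jacobian asymptotics along general characteristics.
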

%
Note that $\veps_0>0$ is a constant depending only on the initial data $\rho_0$ and $u_0$.
The proof of Theorem~\ref{mainthm1} is given in Subsection~\ref{thm-pf}. 
The study of \cite{BCK} finds that when $C^1$ blow-up occurs, the density function $\rho$ blows up in the $L^\infty$ norm, while the velocity function $u$ remains bounded. Due to the nature of the characteristic method, only limited information is obtained; for instance, 
\[
\lim_{t \nearrow T_\ast}\sup_{x \in \mathbb{R}}\rho(x,t) = +\infty \quad \text{ and } \quad \sup_{x \in \mathbb{R}} |u_x(x,t)| \sim \frac{1}{T_\ast-t}
\]
for all $t<T_\ast$ sufficiently close to $T_\ast$. 
In order to obtain  more detailed features of the singularity, such as its spatial configuration, we provide a constructive proof of the blow-up.
More specifically, 
 Theorem~\ref{mainthm1} demonstrates that a cusp-type singularity for the velocity $u$ develops from smooth initial data. The smooth solution $u$ exhibits $C^1$ blow-up in finite time, while it still belongs to $C^{1/3}$ at the blow-up time. 
When this occurs, on the other hand, the density function $\rho$ experiences a more exotic type of blow-up, called the delta-shock, for which an exact blow-up profile is obtained as in the assertion $(iii)$. 
See Figure 1 and 2 for the asymptotic density blow-up profiles described by \eqref{den-profile} and the numerical solution $\rho$ that blows up from smooth initial data, respectively. 
We remark that at the blow-up time $t=T_\ast$, the density function is unbounded but is locally integrable near the blow-up point $x_\ast$ with the profile of $\rho(x,T_\ast) \sim (x-x_*)^{-2/3}$. This is not a \emph{Dirac measure} yet. It is an intriguing conjecture that  the density function $\rho$ and the velocity function $u$ evolve into a Dirac measure as a measure-valued solution and a shock wave with a jump discontinuity, respectively, after the blow-up time $T_\ast$. 

As mentioned earlier, the studies of \cite{BCK, CKKT} provide certain sufficient conditions for the initial data that lead to $C^1$ blow-up in finite time, which do not require the velocity to have a large gradient. When $C^1$ blow-up occurs from these data, just a short time prior to the blow-up time, the solutions will satisfy \eqref{in-H-C}--\eqref{init_ttP-p} so that they will blow up as described in Theorem~\ref{mainthm1}. We conjecture that \emph{generic} $C^1$ blow-up solutions to \eqref{EP-p} are of this type.   

 We also remark that further observations of the approach in \cite{BCK} allow us to discover a wider class of initial data that lead to $C^1$ blow-up. We present the blow-up criterion and provide a short proof of the blow-up under the criterion in Appendix~\ref{BC-app} for future reference. This blow-up criterion turns out to be identical to the one given in Theorem 1.11 of \cite{CKKT}, but our observation makes the proof significantly simpler and shorter. 

%
%
%
%
%
%


\begin{figure}[h]
\begin{tabular}{cc}
\resizebox{60mm}{!}{\includegraphics{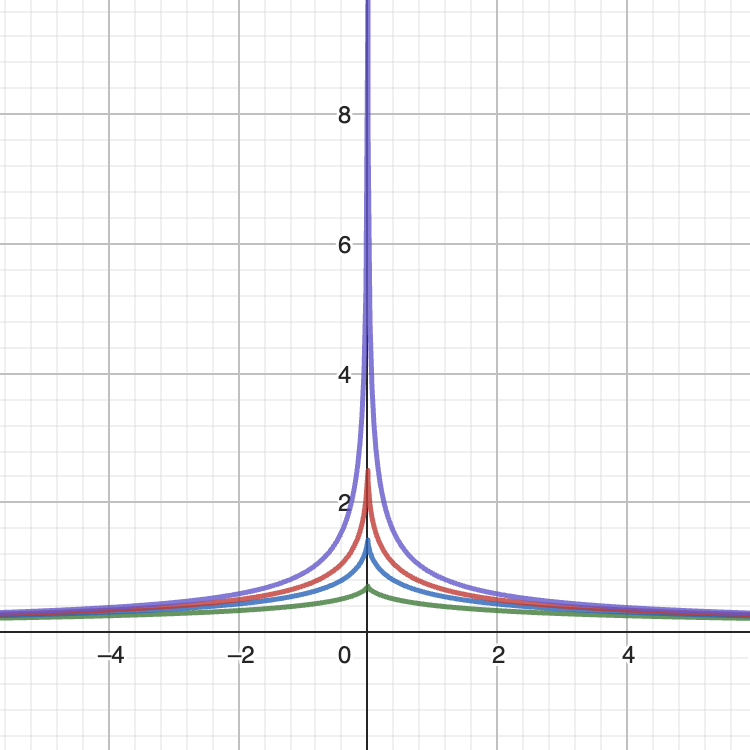}}  & \resizebox{85mm}{!}{\includegraphics{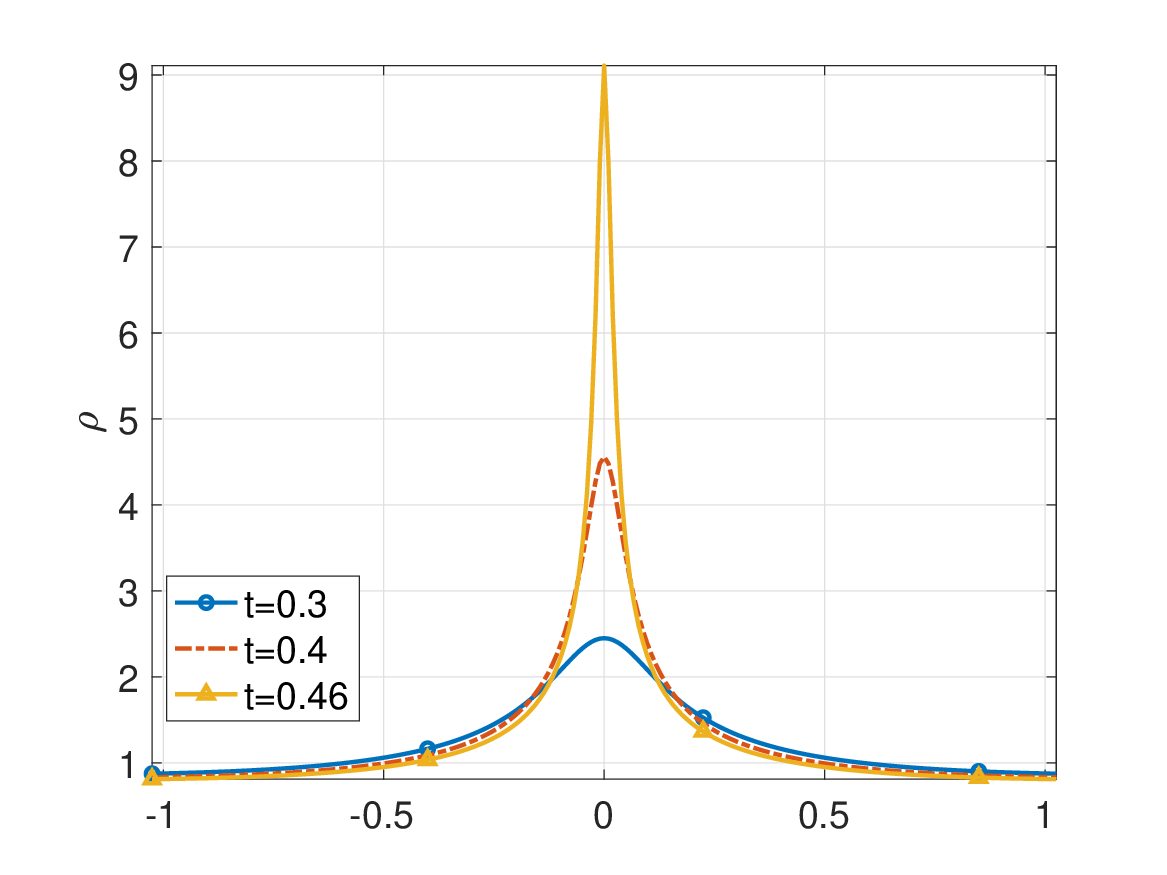}}
 \\
(a)
& (b) 
\end{tabular}
\caption{
  (a) the asymptotic blow-up profile for $\rho$ described by \eqref{den-profile} as $t\to T_\ast$.  (b) the numerical solution $\rho$ that blows up with the initial data  $\rho_0 = 1$ and  $u_0= -\text{sech}(2x)\text{tanh}(2x)$.}
\label{Fig2}
\end{figure}

%

\emph{Outline of the paper:} In Section~\ref{st-est}, using the bootstrap argument, we establish the global stability estimates in the self-similar variables. Using these estimates, we prove Theorem~\ref{mainthm1} at the end of this section. Section~\ref{sec3-boot} is devoted to the closure of the bootstrap assumptions, used to derive the global stability estimates. In Section~\ref{PE-blow-up}, we present a similar blow-up result for the pressureless Euler equations and provide a brief outline the proof.
In the Appendix, we provide some estimates for the Burgers blow-up profile and other miscellaneous materials used throughout our analysis.  

\section{Stability Estimates in the self-similar variables}\label{st-est}
In this section, we establish the global stability estimates in the self-similar time, of which we make use to prove Theorem~\ref{mainthm1}.
First, we rewrite \eqref{EP-p} in the self-similar variables incorporated with dynamic modulation functions.

\subsection{Self-similar variables and modulations.}
We define three dynamic modulation functions $(\tau,\kappa,\xi)(t) : [-\veps,\infty) \rightarrow \mathbb{R}$ satisfying a system of ODEs:
\begin{subequations}\label{modulation-new}
	\begin{align}
		&\dot{\tau}=-(\tau(t)-t)^2\partial_x^2\phi(\xi(t),t),
		\\
		&\dot{\kappa}=-\frac{(\tau(t)-t)^{-1}\partial_x^3\phi(\xi(t),t)}{\partial_x^3u(\xi(t),t)}-\partial_x\phi(\xi(t),t),
		\\
		&\dot{\xi}=\frac{\partial_x^3\phi(\xi(t),t)}{\partial_x^3u(\xi(t),t)}+\kappa(t),
	\end{align}
\end{subequations}
with the initial values
\begin{equation}\label{Modul_init-p}
	\tau(-\veps)=0,  \quad \kappa(-\veps)= 0, \quad \xi(-\veps)=0.
\end{equation}
By a standard ODE theory, the initial value problem for \eqref{modulation-new} with the initial data \eqref{Modul_init-p} admits a unique local $C^1$ solution $(\tau,\kappa,\xi)$.
Here, we define $T_*$ as the first time such that $\tau(t)=t$,  
	i.e., 
	\begin{equation}\label{T-star}
	T_* := \inf\{ t \in [-\ve,\infty) : \tau(t) = t\}.
	\end{equation}
	Later we will show that $T_*$ is finite and is the blow-up time of $\partial_x w$ in the proof of Theorem~\ref{mainthm1}.

We introduce  the following self-similar variables
\begin{equation}\label{change_var}
	y(x,t)= \frac{x-\xi(t)}{(\tau(t)-t)^{3/2}}, \quad s(t)=-\log\left( \tau(t) - t \right).
\end{equation}
Like the self-similar solution of the Burgers equation, we rewrite $(\rho,u,\phi)$ as
\begin{equation}\label{WZPhi-p}
	  \rho(x,t)-1 =e^{s} P(y,s), \quad u(x,t) = e^{-s/2} U(y,s) + \kappa(t), \quad \phi(x,t) = \Phi(y,s).
\end{equation}
From this setting, we can track the blowup time, location and its amplitude of $U$ by determining $\tau$, $\xi$ and $\kappa$. We also see from \eqref{Modul_init-p} and \eqref{change_var} that
\begin{equation*}
	s_0:=s(-\veps)=-\log\veps.
\end{equation*}

Inserting the Ansatz \eqref{WZPhi-p} into the system \eqref{EP-p} and using
\begin{equation}\label{dx-dy}
	\frac{\partial y}{\partial t} = -\dot{\xi}e^{3s/2}+\frac{3}{2}ye^s(1-\dot{\tau}), \qquad \frac{\partial y}{\partial x} = e^{3s/2}, \qquad \frac{\partial s}{\partial t}=(1-\dot{\tau})e^s, 
\end{equation} 
which follow from \eqref{change_var},  we obtain that $P,U$ and $\Phi$ satisfy the equations 
\begin{subequations}\label{EP2-p}
	\begin{align}
		& \partial_sP + \left(1 +\frac{U_y}{1-\dot{\tau}}\right)P + \mathcal{U} P_y = -\frac{e^{-s}U_y}{1-\dot{\tau}}, \label{EP2_1-p} 
		\\ 
		& \partial_s U - \frac{1}{2}U + \mathcal{U} U_y = -\frac{\Phi_y e^s}{1-\dot{\tau}} - \frac{e^{-s/2}\dot{\kappa}}{1-\dot{\tau}}, \label{EP2_2-p} 
		\\
		& -\Phi_{yy}e^{3s} = Pe^s+1 - e^{\Phi}, \label{EP2_3-p} 
	\end{align}
\end{subequations}
where 
\begin{equation}\label{U-p}
	\mathcal{U}:= \frac{U}{1-\dot{\tau}} + \frac{3}{2}y + \frac{e^{s/2}(\kappa-\dot{\xi})}{1-\dot{\tau}} .
\end{equation}
For later use, we present the differentiated equations. Applying $\partial_y^n$ to \eqref{EP2_1-p}, we have
\begin{subequations}\label{EP2_2D-p}
	\begin{align}
		& \left( \partial_s + 1 + \frac{U_y}{1-\dot{\tau}} \right)U_y + \mathcal{U} U_{yy} = -\frac{ e^s\Phi_{yy}}{1-\dot{\tau}},\label{EP2_2D1-p} 
		\\ 
		& \left( \partial_s + \frac{5}{2} + \frac{3U_y}{1-\dot{\tau}} \right)U_{yy} + \mathcal{U} \partial_y^3 U  = -\frac{ e^s\partial_y^3\Phi}{1-\dot{\tau}},  \label{EP2_2D2-p} 
		\\
		& \left( \partial_s + 4 + \frac{4U_y}{1-\dot{\tau}} \right)\partial_y^3U + \mathcal{U} \partial_y^4 U  = - \frac{3U_{yy}^2}{1-\dot{\tau}} -\frac{ e^s\partial_y^4\Phi}{1-\dot{\tau}}, \label{EP2_2D3-p}
		\\
		& \left( \partial_s + \frac{11}{2} + \frac{5U_y}{1-\dot{\tau}} \right)\partial_y^4U + \mathcal{U} \partial_y^5 U  = - \frac{10 U_{yy} \partial_y^3U}{1-\dot{\tau}} -\frac{ e^s\partial_y^5\Phi}{1-\dot{\tau}}. \label{EP2_2D4-p}
	\end{align}
\end{subequations}
Similarly, taking $\partial_y^n$ of \eqref{EP2_2-p}, we obtain
\begin{subequations}\label{EP2_3D-p}
	\begin{align}
		& \left( \partial_s + \frac{5}{2} + \frac{2 U_y}{1-\dot{\tau}} \right)P_y + \mathcal{U} P_{yy} = -\frac{ U_{yy} P}{1-\dot{\tau}}-\frac{e^{- s}}{1-\dot{\tau}}U_{yy}, \label{EP2_3D1-p} 
		\\ 
		& \left( \partial_s + 4 + \frac{3U_y}{1-\dot{\tau}} \right)P_{yy} + \mathcal{U} \partial_y^3 P  = -\frac{ 3U_{yy}P_{y}+\partial_y^3UP}{1-\dot{\tau}}-\frac{e^{- s}}{1-\dot{\tau}}\partial_y^3U,  \label{EP2_3D2-p} 
		\\
		& \left( \partial_s + \frac{11}{2} + \frac{4U_y}{1-\dot{\tau}} \right)\partial_y^3P + \mathcal{U} \partial_y^4 P
  \label{EP2_3D3-p}
   =  - \frac{\partial_y^4UP+4\partial_y^3UP_y + 6U_{yy}P_{yy}}{1-\dot{\tau}}-\frac{e^{- s}}{1-\dot{\tau}}\partial_y^4U. 
	\end{align}
\end{subequations}

We impose the constraints
\begin{equation}\label{constraint-p}
U(0,s) = 0, \quad  U_y(0,s) = -1, \quad U_{yy}(0,s)=0 \qquad \text{for all}\quad s\geq s_0. 
\end{equation}
Substituting \eqref{constraint-p} into \eqref{EP2_1-p}, \eqref{EP2_2D1-p} and \eqref{EP2_2D2-p}, we obtain the following system of ODEs:
\begin{subequations}\label{modulation}
	\begin{align}
		\dot{\tau} &= -e^s\Phi_{yy}(0,s),\label{Eq_Modul1-p} 
		\\
		\dot{\kappa}&=-e^{3s/2}\left(\frac{\partial_y^3\Phi(0,s)}{\partial_y^3U(0,s)}+\Phi_y(0,s)\right),\label{Eq_Modul0-p} 
		\\
		\dot{\xi} &=e^{s/2}\frac{\partial_y^3\Phi(0,s)}{\partial_y^3U(0,s)}+\kappa(t).\label{Eq_Modul2-p}
	\end{align}
\end{subequations}
	We note that \eqref{modulation} is an equivalent system of \eqref{modulation-new}. It is straightforward to check that  
	\begin{equation*}
		(U(0,s), U_y(0,s), U_{yy}(0,s)) \equiv (0, -1, 0)
	\end{equation*} is a solution to a set of equations \eqref{EP2_1-p}, \eqref{EP2_2D1-p} and \eqref{EP2_2D2-p} 
satisfying the condition \eqref{constraint-p} at $s=s_0$. By uniqueness, as long as the solution $(\tau, \kappa, \xi)$ to the initial problem \eqref{modulation} with \eqref{Modul_init-p} exists, we see that  the solution $U$ continues to satisfy the constraints \eqref{constraint-p}.


Now we establish the global pointwise estimates by a bootstrap argument.

\subsection{Bootstrap argument.}\label{boots_sec}

In this section, we introduce the bootstrap assumptions for  any given  time interval $S_0:=[s_0,\sigma_1]$. 
We assume that  for  $s\in[s_0,\sigma_1]$ and $y\in\mathbb{R}$,  $(P,U)$  satisfies 
\begin{subequations}\label{Boot_2}
	\begin{align}
		&|U_y(y,s)-\overline U'(y)| \leq \frac{y^2}{10(y^2+1)},\label{EP2_1D1-p}
		\\
		&|U_y(y,s)-\overline{U}'(y)| \le \frac{1}{y^{2/3}+8}, \label{Utildey_M-p}
		\\
		& |\partial_y ^2 U(y,s)| \leq \frac{15|y|}{(y^2+1)^{1/2}}, \label{EP2_1D3-p}
		\\
		& |\partial_y ^3 U(0,s)-6| \leq 1,  \label{EP2_1D2-p} 
		\\
		& \|\partial_y ^3 U(\cdot,s)\|_{L^{\infty}} \leq M^{5/6},\label{EP2_1D5-p}
		\\
		& \|\partial_y ^4 U(\cdot,s)\|_{L^{\infty}} \leq M, \label{EP2_1D4-p}
		\\
		&	|P(y,s)| \le \frac{2A}{y^{2/3}+8},\label{Wweak-p}
	\end{align}
\end{subequations}
where $M>0$ is a sufficiently large  constant and $A>0$ is a constant defined in \eqref{Aeq}. 

The bootstrap assumption \eqref{Boot_2} implies that $U_y$ is close to $\overline{U}'$ near $y=0$ in $C^2$ norm, ensuring the constraints \eqref{constraint-p} at $y=0$, and that $U$ stays uniformly bounded in $C^4$ norm. 
In fact, the asymptotic behaviors of $U$ near $y=0$ plays an important role in resolving the degeneracy of the damping term around $y=0$ in the transport-type equations.
%

\subsection{Preliminary estimates.}
We present several simple estimates on $\overline{U}$ following from  the bootstrap assumptions \eqref{EP2_1D1-p}--\eqref{Wweak-p}.
By \eqref{EP2_1D1-p} and \eqref{y-w-y2}, we have 
\begin{equation} \label{Uy1-p}
	| U_y (y,s)| \le | \overline{U}'(y) | + | U_y(y,s) -\overline{U}'(y) | \le  \frac{1}{1+\frac{3y^2}{(1+3y^2)^{2/3}}} +\frac{y^2}{10(1+y^2)} \le 1.
\end{equation}
It follows from $U(0,s)=\overline{U}(0)=0$, $|U_y(y,s)|, |\overline{U}'(y)|\leq 1$ and \eqref{EP2_1D1-p} that
\begin{equation*}
	|U(y,s)|\leq |y|, \quad |\overline{U}(y,s)|\leq |y|, \quad |U(y,s)-\overline{U}(y)|\leq \frac{|y|^3}{30}.
\end{equation*}

It is straightforward to check that  \eqref{EP-p} with the decaying assumption $(\rho,u,\phi)(x,t)\rightarrow (1,0,0)$ as $|x|\rightarrow \infty$ has a conserved energy form 
\begin{equation*}
	H(t):=\int_{\mathbb{R}}\frac{1}{2}\rho u^2+\frac{1}{2}|\phi_x|^2+(\phi-1)e^{\phi}+1\,dx.
\end{equation*} 
That is, for any $t\geq -\veps$, we have 
\begin{equation*}
	H(t)=H(-\veps).
\end{equation*}
%

For the following Proposition, we refer to Lemma $4.2$ in \cite{BCK} and the references therein, for instance \cite{LLS}.
\begin{proposition}(\cite{BCK,LLS})\label{phi0_prop}
	For $\rho-1 \in L^{\infty}(\mathbb{R})\cap L^2(\mathbb{R})$ satisfying  $\inf_{x\in \mathbb{R}}\rho>0$ and $\lim_{|x|\rightarrow\infty}\rho=1$, let $\phi$ be the solution to the Poisson equation \eqref{EP_3-p}. Then, it holds that
	\begin{equation*}
		\int_{\mathbb{R}} |\phi_x|^2 + (\phi-1)e^{\phi} +1  \,dx \leq \frac{1}{\theta} \int_{\mathbb{R}} |\rho-1|^2\,dx,
	\end{equation*}
	where $\theta$ is given by 
	$$\theta:= \begin{cases}
		-(1-exp(\underline \rho))/\underline \rho & \text{ if } \underline \rho:=\inf_{x\in\mathbb{R}}\rho<0,
		\\
		1 & \text{ if } \underline \rho\geq 0.
	\end{cases}$$
\end{proposition}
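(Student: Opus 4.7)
The plan is to multiply the Poisson equation \eqref{EP_3-p} by $\phi$, integrate by parts over $\RR$, and then combine three ingredients: an elementary pointwise identity, the convexity of $z\mapsto e^z$, and Young's inequality.

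\textbf{Step 1: integration by parts.} Rewrite \eqref{EP_3-p} as
\begin{equation*}
-\phi_{xx} = (\rho - 1) - (e^\phi - 1).
\end{equation*}
Under the hypotheses $\rho - 1 \in L^2 \cap L^\infty$ with $\inf \rho > 0$ and $\rho \to 1$ at $|x| \to \infty$, the comparison principle for the semilinear equation forces $\phi$ and $\phi_x$ to decay at $\pm\infty$, so that multiplying by $\phi$ and integrating over $\RR$ yields the identity
\begin{equation*}
\int_{\RR} |\phi_x|^2 \,dx + \int_{\RR} \phi(e^\phi - 1) \,dx = \int_{\RR} \phi(\rho - 1) \,dx.
\end{equation*}

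\textbf{Step 2: two pointwise inequalities.} A direct computation gives
\begin{equation*}
\phi(e^\phi - 1) - \bigl[(\phi - 1)e^\phi + 1\bigr] = e^\phi - 1 - \phi \geq 0,
\end{equation*}
so $(\phi - 1)e^\phi + 1 \leq \phi(e^\phi - 1)$ pointwise (both sides are nonnegative, with equality only at $\phi = 0$). Moreover, $z\mapsto (e^z - 1)/z$ (extended by $1$ at $z=0$) is strictly positive and monotone, which gives $\phi(e^\phi - 1) \geq \theta\, \phi^2$ for a constant $\theta > 0$ determined by the range of $\phi$. Applying the maximum principle to $-\phi_{xx} + e^\phi = \rho$ (comparing with the constant solutions $\phi \equiv \log\underline\rho$ and $\phi \equiv \log\overline\rho$) identifies this range from that of $\rho$ and recovers the two explicit cases of $\theta$ stated in the proposition.

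\textbf{Step 3: Young's inequality and absorption.} Choosing the parameter $\epsilon = \theta$ in Young's inequality and using the coercivity of Step 2,
\begin{equation*}
\int \phi(\rho - 1)\, dx \leq \frac{\theta}{2} \int \phi^2 \,dx + \frac{1}{2\theta}\int (\rho - 1)^2\, dx \leq \frac{1}{2}\int \phi(e^\phi - 1)\, dx + \frac{1}{2\theta}\int (\rho - 1)^2\, dx.
\end{equation*}
Substituting into the identity of Step 1 and absorbing half of the $\phi(e^\phi - 1)$ integral to the left yields
\begin{equation*}
\int |\phi_x|^2\, dx + \tfrac{1}{2}\int \phi(e^\phi - 1)\, dx \leq \tfrac{1}{2\theta}\int (\rho - 1)^2\, dx.
\end{equation*}
Doubling both sides, and then invoking $(\phi - 1)e^\phi + 1 \leq \phi(e^\phi - 1)$ together with $|\phi_x|^2 \geq 0$, delivers the claimed inequality.

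\textbf{Expected main obstacle.} The algebra after Step 1 is routine. The delicate part is to make Step 2 rigorous: one must pin down the range of $\phi$ in terms of $\underline\rho$ via a maximum principle for the nonlinear Poisson equation in order to recover the explicit form of $\theta$, and one must verify enough decay of $\phi$ at infinity (from the decay of $\rho - 1$) to justify the integration by parts in Step 1.
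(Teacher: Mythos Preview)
The paper does not actually prove this proposition: it simply cites Lemma~4.2 of \cite{BCK} and \cite{LLS} and moves on, so there is no in-paper argument to compare against. Your energy-identity approach (multiply by $\phi$, integrate by parts, use $(\phi-1)e^{\phi}+1\le \phi(e^{\phi}-1)$ together with the monotonicity of $z\mapsto (e^{z}-1)/z$, then absorb via Young) is the standard route in the cited references and is correct as written; your ``expected main obstacle'' correctly isolates the two points that require care, namely the decay of $\phi,\phi_x$ at infinity and the maximum-principle bound $\log\underline\rho\le\phi\le\log\overline\rho$ needed to pin down $\theta$.

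One remark worth flagging: as stated in the paper, the two cases for $\theta$ are distinguished by the sign of $\underline\rho:=\inf_x\rho$, yet the hypothesis already forces $\underline\rho>0$, so the first case is vacuous on its face. Your maximum-principle computation shows what is really going on: the relevant threshold is the sign of $\inf_x\phi$ (equivalently, whether $\underline\rho\ge 1$ or $0<\underline\rho<1$), and the formula $\theta=-(1-e^{\underline\rho})/\underline\rho$ matches $(e^{\inf\phi}-1)/\inf\phi$ if one reads $\underline\rho$ as $\inf\phi$ (or $\log\inf\rho$) rather than $\inf\rho$. This is a typo in the statement, not a flaw in your argument.
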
 
\begin{remark}\label{H_L2} Note that   \eqref{init_ttP-p} implies that $\rho_0$ satisfies all the necessary conditions for Proposition~\ref{phi0_prop}.  
	Then it is easy to see that  $H(t)$ is controlled by the $L^2$ norm of $(\rho_0-1,u_0)$, i.e., there exists a constant $C=C(\inf_{x\in\mathbb{R}}\rho_0, \sup_{x\in\mathbb{R}}\rho_0)>0$ such that  
	\begin{equation*}
		H(t)\leq C\|(\rho_0-1,u_0)\|_{L^2}^2 .
	\end{equation*} 
\end{remark}

Now, we derive the uniform bound of $\phi$ and $\phi_x$, for which we refer to 
Lemma~$2.1$ and Lemma~$2.2$ of \cite{BCK}. 
Let us define 
\begin{equation*}
	V(z)= \begin{cases}
		V_+(z):=\int^z_0\sqrt{2U(\tau)}\,d\tau & \text{for }z\geq 0,\\
		V_-(z):=\int^0_z\sqrt{2U(\tau)}\,d\tau & \text{for }z\leq 0,
	\end{cases}
\end{equation*}
where $U(\tau):=(\tau-1)e^{\tau}+1\geq 0$.
\begin{lemma}\label{phi_lem}
	As long as the solution to \eqref{EP-p} exists, we have
	\begin{equation}
		\| \phi (\cdot, t) \|_{L^\infty}, \|\Phi(\cdot,s)\|_{L^{\infty}}\leq M_1, \label{Phi_0_M}
	\end{equation} 
	and   if additionally \eqref{Wweak-p} holds, then
	\begin{equation}
		\|\Phi_y(\cdot,s)\|_{L^{\infty}}\leq M_2e^{-11s/8} 
		\label{Phi_1_M-p}
	\end{equation}
	 for some positive constants $M_1$ and $M_2$ depending on $\|(\rho_0-1,u_0)\|_{L^2}$, $\inf_{x\in\mathbb{R}}\rho_0$ and $\sup_{x\in\mathbb{R}}\rho_0$.
\end{lemma}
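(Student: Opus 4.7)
The plan is to handle the two bounds separately: first the pointwise $L^\infty$ bounds on $\phi$ and $\Phi$, which are equivalent, and then the decay estimate on $\Phi_y$ via the Green's function of the linearized Poisson operator in self-similar variables.

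For \eqref{Phi_0_M}, the Ansatz \eqref{WZPhi-p} gives $\Phi(y,s)=\phi(x,t)$ pointwise, so it suffices to bound $\|\phi\|_{L^\infty}$. Conservation of the energy $H(t)$, combined with Remark~\ref{H_L2} and the nonnegativity of each summand in $H$, yields
\begin{equation*}
\int_{\mathbb{R}}\tfrac12|\phi_x|^2+\bigl[(\phi-1)e^\phi+1\bigr]\,dx\le H(t)=H(-\veps)\le C\|(\rho_0-1,u_0)\|_{L^2}^2.
\end{equation*}
The Lyapunov function $V$ defined just above the lemma satisfies
\begin{equation*}
|(V\circ\phi)_x|=\sqrt{2\bigl[(\phi-1)e^\phi+1\bigr]}\,|\phi_x|\le \tfrac12|\phi_x|^2+\bigl[(\phi-1)e^\phi+1\bigr]
\end{equation*}
by AM-GM, so integrating and using $\phi\to 0$ at $\pm\infty$ together with $V(0)=0$ gives $V(\phi(x))\le H(-\veps)$ for every $x$. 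Since $V\ge 0$ with $V(z)\to\infty$ as $|z|\to\infty$, the sublevel set is bounded and $\|\phi\|_{L^\infty}\le M_1$ follows, with $M_1$ depending only on the stated quantities.

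For \eqref{Phi_1_M-p}, I would rewrite the self-similar Poisson equation \eqref{EP2_3-p} as
\begin{equation*}
(-\partial_y^2+e^{-3s})\Phi = e^{-2s}P+e^{-3s}(\Phi-e^\Phi+1)=:f(y,s),
\end{equation*}
whose Green's function on $\mathbb{R}$ is $G_s(y)=\tfrac12 e^{3s/2}e^{-e^{-3s/2}|y|}$, giving $|G_s'(y)|=\tfrac12 e^{-e^{-3s/2}|y|}$ and hence
\begin{equation*}
|\Phi_y(y,s)|\le\tfrac12\int_{\mathbb{R}} e^{-e^{-3s/2}|y-y'|}\,|f(y',s)|\,dy'.
\end{equation*}
For the nonlinear source $e^{-3s}(\Phi-e^\Phi+1)$, the already-established $\|\Phi\|_{L^\infty}\le M_1$ gives $|\Phi-e^\Phi+1|\le C_{M_1}$, and $\int e^{-e^{-3s/2}|y-y'|}\,dy'=2e^{3s/2}$, so this contributes $O(e^{-3s/2})$. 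For the $P$-term, apply the bootstrap \eqref{Wweak-p} and rescale $y'=e^{3s/2}w$, $y=e^{3s/2}v$:
\begin{equation*}
e^{-2s}\int e^{-e^{-3s/2}|y-y'|}\frac{2A\,dy'}{(y')^{2/3}+8}=2Ae^{-s/2}\int\frac{e^{-|v-w|}}{e^{s}w^{2/3}+8}\,dw\le 2Ae^{-3s/2}\int\frac{e^{-|v-w|}}{|w|^{2/3}}\,dw,
\end{equation*}
which by the definition \eqref{Aeq} of $I$ is bounded by $2A\sup_{v}I(v)\cdot e^{-3s/2}$. Combining both contributions, $|\Phi_y(y,s)|\le M_2 e^{-3s/2}\le M_2 e^{-11s/8}$ since $3/2>11/8$ and $s\ge s_0>0$.

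The main obstacle is the $P$-term, since the bootstrap bound $|P|\lesssim(y^{2/3}+8)^{-1}$ is not integrable in $y$. The rescaling $y'=e^{3s/2}w$ is essential: it converts the slowly-decaying weight into the integrable $|w|^{-2/3}$, and the convolution with $e^{-|v-w|}$ is precisely the quantity bounded uniformly by $\sup I$ that has been built into the normalizing constant $A$ via \eqref{Aeq}. The rate $e^{-11s/8}$ stated in the lemma is strictly weaker than what the argument delivers, and the extra slack presumably supports the lower-order error terms that arise when closing the bootstrap in Section~\ref{sec3-boot}.
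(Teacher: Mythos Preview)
Your argument for \eqref{Phi_0_M} is essentially identical to the paper's. For \eqref{Phi_1_M-p}, however, you take a genuinely different route. The paper stays in the original $x$-variable: it multiplies \eqref{EP_3-p} by $-\phi_x$, integrates from $-\infty$ to $x$, and obtains
\[
\tfrac{1}{2}\phi_x^2 \le \|\phi_x\|_{L^2}\,\|\rho-1\|_{L^2} + (e^{\phi}-\phi-1).
\]
It then uses the energy bound $\|\phi_x\|_{L^2}^2\le 2H(-\veps)$ and estimates, via \eqref{Wweak-p} and the change of variables $dx=e^{-3s/2}dy$, that $\int(\rho-1)^2\,dx\lesssim e^{s/2}$. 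This gives $|\phi_x|\lesssim e^{s/8}$, whence $|\Phi_y|=e^{-3s/2}|\phi_x|\le M_2e^{-11s/8}$; the exponent $11/8$ is therefore exactly what the method produces, not slack. Your Green's-function argument in the self-similar variable is more direct and in fact delivers the sharper rate $e^{-3s/2}$. Both approaches need the bootstrap \eqref{Wweak-p}, but the paper uses it only through the scalar quantity $\|\rho-1\|_{L^2}$, while you exploit the pointwise profile and the integral $I$ from \eqref{Aeq} via the rescaling $y'=e^{3s/2}w$. The paper's route is closer to standard energy estimates and avoids justifying the convolution representation; yours gives a stronger conclusion with essentially the same input.
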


\begin{proof}
Loosely following the proof of Lemma $2.1$ in \cite{BCK}, we see that $V$ satisfies
	\begin{equation*}
		\begin{split}
			0 \leq V(\phi(x,t)) & \leq \int^x_{-\infty}\left| V'(\phi(y,t))\right|\left|\phi_{x} (y,t) \right|\,dy 
			\\
			&\leq \int^\infty_{-\infty}U(\phi(x,t))\,dx+\frac{1}{2}\int^\infty_{-\infty}\left|\phi_x (x,t) \right|^2\,dx\\
			&\leq \int^{\infty}_{-\infty} \frac{1}{2}\rho u^2+\frac{1}{2}|\phi_x|^2+(\phi-1)e^\phi+1\,dx=H(t)=H(-\veps).
		\end{split}
	\end{equation*} 
	Thus, we see that  $\|\phi (\cdot, t) \|_{L^{\infty}}\leq \max\{|V^{-1}_{-}(H(-\veps))|, |V^{-1}_{+}(H(-\veps))|\}$, which  in view of Remark~\ref{H_L2}, implies \eqref{Phi_0_M} for some $M_1>0$ which depends only on $\|(\rho_0-1,u_0)\|_{L^2}$, $\inf_{x\in\mathbb{R}}\rho_0$ and $\sup_{x\in\mathbb{R}}\rho_0$.
	
	Multiplying \eqref{EP_3-p} by $-\phi_x,$ and then integrating in $x$,
	\begin{equation}\label{phix2}
		\begin{split}
			\frac{\phi_x^2}{2}&=\int^{x}_{-\infty}-\phi_x(\rho-1)\,dx'+\int^x_{-\infty}e^{\phi}\phi_x-\phi_x \,dx'\\
			&\leq \left(\int^{x}_{-\infty}\phi_x^2\, dx'\right)^{1/2}\left(\int^{x}_{-\infty}(\rho-1)^2dx'\right)^{1/2}+e^{\phi}-\phi-1\\
			&\leq \sqrt{2 H(-\ve) }\left(\int^{x}_{-\infty}(\rho-1)^2 \,dx'\right)^{1/2}+e^{M_1}-M_1-1.
		\end{split}
	\end{equation}
	Here we have used the fact 
	that 
	\begin{equation*}
		\frac{1}{2}\int_{\mathbb{R}}|\phi_x|^2dx'\leq H(-\veps)
	\end{equation*} and the fact from \eqref{Phi_0_M} that 
	\begin{equation*}
		e^{\phi}-\phi-1 \leq e^{M_1}-M_1-1. 
	\end{equation*}
	On the other hand, we see from \eqref{WZPhi-p} and \eqref{Wweak-p} that 
	\begin{equation*}
		\int^x_{-\infty}(\rho-1)^2dx'=e^{-3s/2}\int^y_{-\infty}(\rho-1)^2dy' \leq (2A)^2e^{s/2}\int^{\infty}_{-\infty}\frac{1}{(y^{2/3}+8)^2}\,dy=Ce^{s/2}
	\end{equation*}
	for some $C>0$. 
	%
	This, together with \eqref{phix2}, implies  that 
		$|\phi_x| 
		\le M_2 e^{s/8},$ 
	 and in turn,  
	\begin{equation*}
		|\Phi_y|=|\phi_x e^{-3s/2}|\leq M_2e^{-11s/8}
	\end{equation*} for some positive constant $M_2$.
	This completes the proof. 
\end{proof}

 Using Lemma~\ref{phi_lem}, we obtain the decay estimate of $\dot{\tau}$. 
	\begin{lemma}
		 Suppose that \eqref{Wweak-p} holds. Then 
		it holds that
		\begin{equation}\label{tau_dec}
			|\dot{\tau}|\leq Ce^{-s}
		\end{equation}
		and
		\begin{equation}\label{d-t-e-p}
			\frac{1}{1-\dot{\tau}}\leq  1+O(\veps).
		\end{equation}
	\end{lemma}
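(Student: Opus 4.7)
The plan is to express $\dot\tau$ directly in terms of $P$ and $\Phi$ via the Poisson equation \eqref{EP2_3-p}, then bootstrap the smallness of each factor. Starting from the modulation equation \eqref{Eq_Modul1-p}, we have $\dot\tau = -e^s\Phi_{yy}(0,s)$. The Poisson equation \eqref{EP2_3-p} gives
\[
\Phi_{yy}(y,s) = -e^{-2s}P(y,s) - e^{-3s}\bigl(1 - e^{\Phi(y,s)}\bigr),
\]
so evaluating at $y=0$ yields
\[
\dot\tau(s) = e^{-s}P(0,s) + e^{-2s}\bigl(1 - e^{\Phi(0,s)}\bigr).
\]

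For the first term, the bootstrap assumption \eqref{Wweak-p} at $y=0$ gives $|P(0,s)| \le 2A/8 = A/4$. For the second term, Lemma~\ref{phi_lem} provides the uniform bound $\|\Phi(\cdot,s)\|_{L^\infty} \le M_1$, so $|1 - e^{\Phi(0,s)}| \le 1 + e^{M_1}$. Combining these with the trivial inequality $e^{-2s} \le e^{-s}$ (valid since $s \ge s_0 = -\log\veps$ is large for small $\veps$), we obtain
\[
|\dot\tau(s)| \le \tfrac{A}{4}\, e^{-s} + (1+e^{M_1})\, e^{-2s} \le C e^{-s}
\]
for some constant $C>0$ depending only on the quantities controlling $M_1$ (namely $\|(\rho_0-1,u_0)\|_{L^2}$, $\inf\rho_0$, $\sup\rho_0$), which proves \eqref{tau_dec}.

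For \eqref{d-t-e-p}, using $s \ge s_0 = -\log\veps$ so that $e^{-s} \le \veps$, the bound just derived yields $|\dot\tau(s)| \le C\veps$. Choosing $\veps$ sufficiently small so that $C\veps < 1/2$, say, we get $1 - \dot\tau \ge 1 - C\veps > 0$, and hence
\[
\frac{1}{1-\dot\tau} \le \frac{1}{1-C\veps} = 1 + \frac{C\veps}{1-C\veps} = 1 + O(\veps).
\]
The only subtlety is ensuring that the constants $M_1$ and $A$ (which are fixed from the initial data) combined with the factor $e^{-s} \le \veps$ indeed produce a bound small enough for the geometric series expansion; since this is just smallness in $\veps$, no obstacle arises. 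The main step is simply recognizing that the Poisson equation lets us trade derivatives of $\Phi$ at the origin for pointwise values of $P$ and $\Phi$, which are already controlled by the bootstrap assumption and Lemma~\ref{phi_lem}.
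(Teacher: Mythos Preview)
Your proof is correct and follows essentially the same approach as the paper: both use the modulation equation $\dot\tau = -e^s\Phi_{yy}(0,s)$, substitute the Poisson equation \eqref{EP2_3-p} to express $\Phi_{yy}(0,s)$ in terms of $P(0,s)$ and $e^{\Phi(0,s)}$, and then invoke \eqref{Wweak-p} and the bound \eqref{Phi_0_M} from Lemma~\ref{phi_lem}. Your derivation of \eqref{d-t-e-p} is slightly more detailed than the paper's (which simply says it ``immediately follows''), but the argument is the same.
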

\begin{proof}
	Using \eqref{EP2_3-p} and \eqref{Eq_Modul1-p} with \eqref{Wweak-p} and \eqref{Phi_0_M}, we have
	\begin{equation*}
		\begin{split}
			|\dot{\tau}|
			= |e^s\Phi_{yy}(0,s)| &\leq |P(0,s)|e^{-s} + e^{\|\Phi\|_{L^{\infty}}}e^{-2s} + e^{-2s}
			\leq Ce^{-s}. 
		\end{split}
	\end{equation*}
	Then, \eqref{d-t-e-p} immediately follows.
\end{proof}

\subsection{Global continuation}\label{global-conti}
Now we shall close  the bootstrap assumptions \eqref{EP2_1D1-p}--\eqref{Wweak-p}.  
\begin{proposition}\label{mainprop}
	There is a sufficiently small constant $\veps_0=\veps_0(M, \|(\rho_0-1,u_0)\|_{L^2}, \inf_{x\in\mathbb{R}}\rho_0, \sup_{x\in\mathbb{R}}\rho_0, \|\rho_0\|_{C^3})>0$ such that for each $\veps\in (0,\veps_0)$, the following statements hold. Consider the smooth solution $(\rho,u,\phi)$ to \eqref{EP-p} satisfying \eqref{init_w_3-p}--\eqref{init_ttP-p}. Suppose that the associated solution $(P,U)$ in the self-similar variables satisfies  \eqref{EP2_1D1-p}--\eqref{Wweak-p}  for  $s\in[s_0,\sigma_1]$ and $y\in\mathbb{R}$. Then,  for  $s\in[s_0,\sigma_1]$ and $y\in\mathbb{R}$, we have  
	\begin{subequations}
		\begin{align}
			&|U_y(y,s)-\overline U'(y)| \leq \frac{y^2}{20(y^2+1)},\label{1}
			\\
			&|U_y(y,s)-\overline{U}'(y)| \le \frac{24}{25}\frac{1}{y^{2/3}+8}, \label{Utildey_M-str}
			\\
			& |\partial_y ^2 U(y,s)| \leq \frac{14|y|}{(y^2+1)^{1/2}}, \label{3}
			\\
			& |\partial_y ^3 U(0,s)-6| \leq C\veps,  \label{4} 
			\\
			& \|\partial_y ^3 U(\cdot,s)\|_{L^{\infty}} \leq \frac{M^{5/6}}{2},\label{5}
			\\
			& \|\partial_y ^4 U(\cdot,s)\|_{L^{\infty}} \leq \frac{M}{2}, \label{6}
			\\
			&|P(y,s)| \le \frac{ A}{y^{2/3}+8}.\label{Wweak-str}
		\end{align}
	\end{subequations}
	\end{proposition}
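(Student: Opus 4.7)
The plan is to close each of the seven bootstrap bounds \eqref{EP2_1D1-p}--\eqref{Wweak-p} one at a time, by treating each derivative quantity as the solution of a damped transport equation along the characteristic flow of $\mathcal U$. The governing equations are \eqref{EP2_2D-p} for derivatives of $U$ and \eqref{EP2_3D-p} for derivatives of $P$. The ingredients available in each case are: the damping coefficient provided by the zeroth-order term, the $O(e^{-s})$ smallness of $\dot\tau$ (from \eqref{tau_dec}) and of the $\Phi$-derivative forcing (from Lemma~\ref{phi_lem}), the constraints \eqref{constraint-p} at the origin, and the initial bounds \eqref{init_w_3-p}--\eqref{init_ttP-p}, which are calibrated so that every improved inequality \eqref{1}--\eqref{Wweak-str} holds with strict margin at $s=s_0$.

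For the pointwise near-zero estimates \eqref{1} and \eqref{3}, I would work with $\tW:=U_y-\overline U'$ and with $U_{yy}$; both vanish at $y=0$ by \eqref{constraint-p}. Subtracting \eqref{Burgers_SS} from \eqref{EP2_2D1-p} yields a damped transport equation for $\tW$ whose forcing is $-e^s\Phi_{yy}/(1-\dot\tau)$, controlled via \eqref{Phi_1_M-p}. Integrating along the $\mathcal U$-characteristics with the anchor $\tW(0,s)=0$ sharpens the constant $1/10$ to $1/20$, and the same procedure applied to \eqref{EP2_2D2-p} sharpens $15$ to $14$. Estimate \eqref{4} is obtained by evaluating \eqref{EP2_2D3-p} at $y=0$: using $U(0,s)=0$ together with the fact that the modulation ensures $\mathcal U(0,s)$ is small, one gets a scalar ODE for $\partial_y^3 U(0,s)$ with $O(\veps)$ forcing (coming from $e^s\partial_y^3\Phi(0,s)$) and initial value $6$, and Grönwall gives $|\partial_y^3 U(0,s)-6|\le C\veps$.

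For the higher-derivative $L^\infty$ bounds \eqref{5} and \eqref{6}, I would apply the maximum principle to \eqref{EP2_2D3-p}--\eqref{EP2_2D4-p}. The damping coefficients $4+4U_y/(1-\dot\tau)$ and $\tfrac{11}{2}+5U_y/(1-\dot\tau)$ are weakest near $y=0$ where $U_y\approx-1$, but the crucial feature is that the $\mathcal U$-flow is repelling at the origin: since $U/y\to U_y(0,s)=-1$ and $\mathcal U=U/(1-\dot\tau)+\tfrac32 y+O(\veps^{1/2})$, one has $\mathcal U/y\sim\tfrac12$ near the origin, so characteristics starting off $y=0$ move outward and enter the region where the damping is strictly positive. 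Combined with the quadratic source bounds $U_{yy}^2/(1-\dot\tau)$ and $10U_{yy}\partial_y^3 U/(1-\dot\tau)$ controlled via \eqref{EP2_1D3-p}, \eqref{EP2_1D5-p}, the characteristic method drives the suprema of $\partial_y^3 U$ and $\partial_y^4 U$ down to half the initial bounds from \eqref{init_24-p}.

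The main obstacle is the weighted pair \eqref{Utildey_M-str} and \eqref{Wweak-str}. The damping coefficient $1+U_y/(1-\dot\tau)$ in \eqref{EP2_1-p} vanishes at $y=0$, so pointwise damping is unavailable for $P$ near the origin, and an analogous degeneracy afflicts the weighted equation for $U_y-\overline U'$. To compensate, I would multiply each equation by the weight $y^{2/3}+8$ and propagate the weighted quantities along characteristics. The transport term $\mathcal U\partial_y$ applied to the weight produces $\tfrac23 y^{-1/3}\mathcal U$, and combining this with $\mathcal U/y\sim\tfrac12$ near the origin (obtained from \eqref{Burgers_SS'} and \eqref{EP2_1D1-p}) yields a strictly positive effective damping rate that beats the degeneracy. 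The constant $A$ in \eqref{Aeq}, defined via the integral $I(y)$, is tuned precisely so that the Duhamel response to the forcing $-e^{-s}U_y/(1-\dot\tau)$, added to the decayed initial datum, stays strictly below $A$ in the weighted norm, and similarly for the $24/25$ factor. Getting all these constants to close with the right margin is the most delicate part of the argument.
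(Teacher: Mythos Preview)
Your outline has the right overall structure (damped transport along $\mathcal U$-characteristics, modulation constraints at $y=0$, $O(e^{-s})$ forcing from $\Phi$), but there is a genuine gap in how you propose to handle the degeneracy at the origin for the weighted estimates \eqref{Utildey_M-str} and \eqref{Wweak-str}. Your claim that the weight $y^{2/3}+8$ produces a strictly positive effective damping near $y=0$ is incorrect: the commutator of $\mathcal U\partial_y$ with the weight contributes $-\tfrac{2}{3}y^{-1/3}\mathcal U/(y^{2/3}+8)$ to the damping, and since $\mathcal U\sim y/2$ this is $\sim y^{2/3}/24\to 0$ at the origin, while the original coefficient $1+U_y/(1-\dot\tau)$ also vanishes there. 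So the weighted damping still degenerates. The paper does \emph{not} rely on positive damping near $y=0$; instead it uses a two-zone argument. For $|y|\le 3$ it invokes a crude unweighted bound on $P$ (Lemma~\ref{Lemma1}, obtained directly from \eqref{EP2_1-p} since $1+U_y\ge 0$) which already gives $(y^{2/3}+8)|P|<3A/4$ there, and for $|y|\ge 3$ it shows the weighted damping is merely nonnegative (via the profile inequality \eqref{0605_m_7}) with $O(e^{-s})$ forcing, then runs a contradiction argument at the maximum point. The same two-zone structure, with Taylor expansion replacing the crude bound in the inner zone, is what actually closes \eqref{1}, \eqref{3}, and \eqref{5}.

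A second omission is that the equation for $\wt U_y:=U_y-\overline U'$ is not a pure local transport equation: subtracting \eqref{Burgers_SS} from \eqref{EP2_2D1-p} produces the term $-\wt U\,\overline U''/(1-\dot\tau)$ on the right, and since $\wt U=\int_0^y\wt U_y\,dy'$ this becomes a nonlocal kernel acting on the weighted unknown. The paper must then verify (Lemmas~\ref{Wy1_lem_p} and~\ref{mainprop_1-p}) that the kernel is dominated by the damping, $\int|K|\,dy'\le D/\lambda$ with $\lambda>1$, and this is precisely where the sharp Burgers-profile inequalities \eqref{0605_m_3}, \eqref{0605_m_4}, \eqref{0605_m_7} enter. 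Your sketch treats these estimates as straightforward characteristic integrations, which they are not. Finally, Lemma~\ref{phi_lem} only bounds $\Phi$ and $\Phi_y$; the higher derivatives $\partial_y^j\Phi$ ($j=2,\dots,5$) needed for the forcing terms in \eqref{EP2_2D-p} require first establishing uniform bounds on $\partial_y^jP$ (Lemma~\ref{Pylem}) and then reading off $\|\partial_y^j\Phi\|_{L^\infty}\le Ce^{-2s}$ from the Poisson equation (Lemma~\ref{Phider_lem}).
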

\begin{proof}
	Since the proof is rather lengthy,  we decompose it into several lemmas given in Section~\ref{sec3-boot}. The desired estimates are established through Lemma~\ref{U30lem}--Lemma~\ref{mainprop_1-p}.
	For the sake of readers, we give a list: \eqref{1} is obtained in Lemma~\ref{Wy1_lem_p}, \eqref{Utildey_M-str} in Lemma~\ref{mainprop_1-p}, \eqref{3} in Lemma~\ref{Wy2_lem}, \eqref{4} in Lemma~\ref{U30lem}, 
	\eqref{5} in Lemma~\ref{w-3-est}, 
		\eqref{6} in Lemma~\ref{Wy4_lem-p} 
		and \eqref{Wweak-str} in Lemma~\ref{propW}, respectively. 
%
%
%
\end{proof}

Next, we present a local existence theorem for the initial value problem \eqref{EP-p} with the initial data \eqref{in-H-C}. 
\begin{lemma}\label{criterion} (\cite{LLS}). 
	For the initial data \eqref{in-H-C}, there is $T\in (-\veps,\infty)$ such that the initial value problem \eqref{EP-p} admits a unique solution 
	$(\rho-1, u,\phi )\in C([-\veps,T); H^4(\mathbb{R})\times H^5(\mathbb{R}) \times H^6(\mathbb{R} ))$.
	Suppose further that
	\begin{equation*}
		\lim_{t\nearrow T}\|u_x(\cdot, t)\|_{L^{\infty}}<\infty.
	\end{equation*}
	Then, the solution can be continuously extended beyond $t=T$, i.e., there exists a unique solution
	\begin{equation*}
		(\rho-1, u, \phi)\in C([-\veps,T+\delta); H^4(\mathbb{R})\times H^5(\mathbb{R}) \times H^6(\mathbb{R} ))
	\end{equation*}
	for some $\delta>0$.
\end{lemma}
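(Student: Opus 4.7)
The plan is to combine standard symmetric-hyperbolic theory for the $(\rho,u)$-subsystem with a nonlinear elliptic solve for the potential $\phi$, exactly as in \cite{LLS}.

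First I would invert the Poisson equation $-\phi_{xx}+e^\phi-1=\rho-1$. Because the nonlinearity $\Phi\mapsto e^\Phi-1$ is strictly monotone, this elliptic problem has a unique solution for each admissible $\rho-1$, obtainable either variationally (minimize $\int \tfrac12\phi_x^2+(e^\phi-\phi-1)\,dx+\int(\rho-1)\phi\,dx$) or via Banach contraction. Standard elliptic bootstrapping then shows that the solution map $\rho-1\mapsto \phi$ is smooth as a map $H^k(\mathbb{R})\to H^{k+2}(\mathbb{R})$; the $L^\infty$-bound on $\phi$ in Lemma~\ref{phi_lem} makes $e^\phi$ a bounded Nemytskii operator on each $H^\ell$ with derivatives controlled by $\|\phi\|_{L^\infty}$. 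Substituting $\phi=\phi[\rho]$ reduces \eqref{EP-p} to a closed quasilinear system for $(\rho-1,u)$ with a nonlocal but smoothing source $-\phi_x[\rho]$ in the velocity equation.

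Second, I would prove local existence by Kato's theorem for symmetric hyperbolic systems, or equivalently Picard iteration on the linearized problem. The reduced system
\begin{equation*}
\partial_t \rho + u\,\partial_x\rho = -\rho\,\partial_x u, \qquad \partial_t u + u\,\partial_x u = -\phi_x[\rho]
\end{equation*}
is symmetrizable and Friedrichs-symmetric after multiplying the first equation by $\rho^{-1}$; the source $\phi_x[\rho]$ gains a derivative over $\rho$, so the usual Kato--Ponce commutator estimates close $H^4\times H^5$ energy bounds on the $(\rho-1,u)$ variables. Uniqueness follows from the same energy identity applied to the difference of two solutions. The extra regularity $\phi\in H^6$ is automatic from the elliptic solve.

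Third, for the continuation criterion I would run a Beale--Kato--Majda-type argument under the hypothesis $\sup_{t<T}\|u_x(\cdot,t)\|_{L^\infty}<\infty$. Along the flow of $u$, the continuity equation yields $\tfrac{D}{Dt}\log\rho = -u_x$, which together with the lower bound $\inf\rho_0>0$ gives uniform upper and lower bounds on $\rho$ on $[-\ve,T)$. Differentiating the system and pairing in $H^k$ for $k=4,5$, the commutator and product estimates produce
\begin{equation*}
\frac{d}{dt}\|(\rho-1,u)\|_{H^4\times H^5}^2 \lesssim \bigl(1+\|u_x\|_{L^\infty}+\|\rho\|_{L^\infty}\bigr)\|(\rho-1,u)\|_{H^4\times H^5}^2,
\end{equation*}
where the elliptic estimate $\|\phi_x[\rho]\|_{H^5}\lesssim \|\rho-1\|_{H^4}$ absorbs the nonlocal source without loss. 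Gronwall's inequality gives uniform bounds on $[-\ve,T)$, and a final application of local existence at time $T-\delta'$ extends the solution across $T$.

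The main obstacle is really just bookkeeping: one must verify that the nonlinear elliptic solve is smooth enough on the Sobolev scale to feed into the hyperbolic energy method with no net loss of derivative, which in turn requires keeping the coefficient $e^\phi$ uniformly comparable to $1$. This is precisely where Lemma~\ref{phi_lem} is essential, as it provides the $L^\infty$-control on $\phi$ needed to bound all Nemytskii derivatives of $e^\phi$ along the evolution.
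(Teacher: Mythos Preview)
Your proposal is correct and follows essentially the same approach as the paper: local existence is cited from \cite{LLS} (you spell out the symmetric-hyperbolic/elliptic-solve argument that underlies it), and the continuation criterion is obtained exactly as in the paper by bounding $\|\rho\|_{L^\infty}$ via the characteristic form of the continuity equation, invoking the $L^\infty$ bound on $\phi$ from Lemma~\ref{phi_lem}, and closing the $H^4\times H^5$ energy estimate with Gronwall.
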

\begin{proof}
	We refer to Theorem~1 of \cite{LLS} for the local existence. 
	By revisiting the energy estimates, for any $k\ge2$, 
	we obtain 
	\begin{equation*}
		\frac{d}{dt}\left(\|\rho-1\|^2_{H^k}+\|u\|^2_{H^{k+1}}\right)\leq C(\|\rho\|_{L^{\infty}},  \|u_x\|_{L^{\infty}}, \|\phi\|_{L^{\infty}}) \left(\|\rho-1\|^2_{H^k}+\|u\|^2_{H^{k+1}}\right), 
	\end{equation*}
	where $C=C(\|\rho\|_{L^{\infty}},  \|u_x\|_{L^{\infty}}, \|\phi\|_{L^{\infty}})>0$ is a constant depending on $\|\rho\|_{L^{\infty}}, \|u_x\|_{L^{\infty}}$ and $\|\phi\|_{L^{\infty}}$. 
On the other hand,  we see that
	 \eqref{Phi_0_M} in Lemma~\ref{phi_lem} implies $\|\phi\|_{L^{\infty}}\lesssim 1$ and the continuity equation \eqref{EP_1-p} gives $\| \rho(\cdot, t)\|_{L^\infty} \leq e^{ \int_0^t \| u_x (\cdot, t' )\|_{L^\infty} \,dt'} \| \rho_0\|_{L^\infty}$. Thus, as long as $\| u_x(\cdot, t) \|_{L^\infty}<\infty$, we have $\| \rho(\cdot, t)\|_{L^\infty} <\infty$, and $C=C(\| u_x \|_{L^\infty})<\infty$, which yields the desired  extension criteria.
\end{proof}
Note that by the Sobolev embedding, 
if $(\rho-1,u)\in C([-\veps,T); H^4(\mathbb{R}) \times H^5(\mathbb{R}) )$, then 
$\rho-1 \in C^3([-\veps,T)\times \mathbb{R})$ and $u\in C^4([-\veps,T)\times \mathbb{R})$. 
 By Lemma~\ref{criterion}, we see that there is a time-interval $[s_0,\sigma_1]$ for some $\sigma_1>s_0$, in which the solution $(P,U)$ satisfies the bootstrap assumptions \eqref{EP2_1D1-p}--\eqref{Wweak-p}.

Now, we show that the desired estimates in Proposition~\ref{mainprop} holds  for all $s\ge s_0$. 
Let us define  the vector $\{V_i(s)\}_{1\leq i\leq 7}$ as the terms on the left hand side of \eqref{EP2_1D1-p}--\eqref{Wweak-p} with their corresponding weight functions, i.e.,
\begin{equation*}
	\begin{split}
		V_1 &:= \sup_{y\in\mathbb{R}} \left( \frac{1+y^2}{y^2} |U_y(y,s)-\overline{U}'(y)| \right) , \quad V_2 := \sup_{y\in\mathbb{R}} \left( (y^{2/3} + 8 ) | U_y(y,s) - \overline U'(y) | \right),
		\\
		V_3 & : = \sup_{y\in\mathbb{R}} \left( \frac{(1+y^2)^{1/2}}{|y|} | U_{yy}(y,s) | \right), \quad V_4  := | \partial_y^3 U(0,s) -6 |,  \quad V_5  := \| \partial_y^3 U (\cdot, s) \|_{L^\infty}, 
		\\
		V_6 & := \| \partial_y^4 U (\cdot, s) \|_{L^\infty},   \quad 
		V_7  :=  \sup_{y\in\mathbb{R}} \left( (y^{2/3}+8)|P(y,s)| \right).
	\end{split}
\end{equation*}
Also, 
let $\{K_i\}_{1\leq i\leq 7}$ be the right hand side terms of \eqref{EP2_1D1-p}--\eqref{Wweak-p}, which are 
\begin{equation*}
	K_1:= \frac{1}{10}, \ \ K_2:= 1,\ \ K_3:= 15, \ \ K_4:=1 , \ \  K_5:= M^{5/6}, \ \ K_6 := M,  \ \ K_7:= 2A. 
\end{equation*}
For $\beta=(\beta_1,\cdots , \beta_7)\in (0,1)\times \cdots \times (0,1)$, we define the solution space $\mathfrak{X}_\beta(s)$ by
\begin{multline*}
\mathfrak{X}_\beta(s) :=\{ (P,U) \in (C^3\times C^4)([s_0,s]\times \mathbb{R}) : V_i(s') < \beta_i K_i, \;\forall s'\in[s_0,s], \; 1\leq i\leq 7  \}.
\end{multline*}
For simplicity, let us denote $\mathfrak{X}_\beta(s)$ with $\beta_i=1$ for $i=1,\cdots,7$ by $\mathfrak{X}_1(s)$.

Note that the set of initial conditions \eqref{init_w_3-p}--\eqref{init_ttP-p} for \eqref{EP-p} implies that the new unknowns $(P,U, \Phi)$  in  the self-similar variables $(y,s)$  satisfy  \eqref{EP2_1D1-p}--\eqref{Wweak-p} at $s=s_0 = -\log \veps$. 
That is,  
\begin{equation*}
	V_i(s_0)\leq \alpha_iK_i 
\end{equation*}
 for some $\alpha_i\in(0,1)$, $1\leq i\leq 7$.  
Proposition~\ref{mainprop} indicates that if $(P,U)\in \mathfrak{X}_1(\sigma_1)\cap \mathfrak{X}_\alpha(s_0)$, then $(P,U)\in \mathfrak{X}_\beta(\sigma_1)$ for some $\beta_i\in (\alpha_i,1)$, $1\leq i\leq 7$. 
 
 We will use a standard continuation argument together with the local existence theory, Lemma~\ref{criterion} and the initial conditions \eqref{in-H-C}--\eqref{init_ttP-p}, to show that the solution $(P,U)$ exists globally and satisfies Proposition~\ref{mainprop} for all $s\in[s_0,\infty)$. I.e., $(P,U) \in \mathfrak{X}_\beta(\infty)$.

 Now, we define
 \begin{equation*}
 	\sigma:= \sup \{ s\ge s_0 : 
 	(P,U) \in \mathfrak{X}_\beta(s) \}.
 \end{equation*} 
 We claim that $\sigma=\infty$. 
 Suppose to the contrary that $\sigma<\infty$. Note that for the corresponding $t=T_\sigma$ of $s=\sigma$, it is easy to see that thanks to \eqref{change_var}, $\sigma<\infty$ implies $\tau(t)>t$  for all $t\in[-\veps,T_\sigma]$. This means that $(y,s)\in \mathbb{R}\times [s_0,\sigma]$ is well-defined in $(x,t)\in \mathbb{R}\times[-\veps,T_\sigma]$. 
 Using \eqref{EP2_1D1-p} and \eqref{y-w-y2}, we have 
 \[ \sup_{s\in[s_0, \sigma)} \| U_y(\cdot,s) \|_{L^\infty} \le \sup_{s\in[s_0, \sigma)} \| U_y(\cdot,s)-\overline{U}'(\cdot) \|_{L^\infty} +\| \overline{U}'(\cdot) \|_{L^\infty}\le \frac{y^2}{10(1+y^2)}+ \frac{1}{1+\frac{3y^2}{(3y^2+1)^{2/3}}}\le 1. \]
 This together with the fact that $u_x = e^{s} U_y$ implies $
 \|u_x(\cdot, t)\|_{L^\infty}\le e^{\sigma}<\infty$ for all $t\le T_\sigma$.
 %
 Then, thanks to Lemma~\ref{criterion}, 
 there is a unique extended solution $(\rho-1,u)\in C^3\times C^4([-\ve, T_\sigma+\delta))
 $ for some $\delta>0$.
This immediately implies that 
 $(P,U)$ can be extended so that $(P,U)\in \mathfrak{X}_1(\sigma+\delta')$ for some $\delta'>0$. Then, by Proposition~\ref{mainprop},
 we see that $(P,U)\in \mathfrak{X}_\beta(\sigma+\delta')$. 
  This contradicts to the definition of $\sigma$, which concludes that  $\sigma=\infty$. 

Now we are ready to  prove Theorem~\ref{mainthm1}.

	\subsection{Proof of Theorem~\ref{mainthm1}}\label{thm-pf}
We split the proof into several subsections. 

Step 1: 
Recall the definition of $T_\ast$ in \eqref{T-star}, i.e., $T_\ast:=  \inf\{ t\ge-\ve : \tau(t)=t\}$. 
For sufficiently small $\ve>0$, thanks to  \eqref{tau_dec} implying $|\dot{\tau}(t)|<1$,  and $\tau(-\veps)=0$, there exists a number $t_0<\infty$ such that $\tau(t_0)=t_0$. This means that  $T_\ast<\infty$ is well-defined. 
By a standard continuation argument in Section~\ref{global-conti}, we have shown that  $(P,U)\in\mathfrak{X}_\beta(\infty)$. 
%
This implies that the solution $(\rho-1,u)$ exists at least up to $t=T_\ast$, and that   
\begin{equation*}
	(\rho-1,u)\in  C^3([-\veps,T_*)\times \mathbb{R})\times C^4([-\veps,T_*)\times \mathbb{R}). 
\end{equation*}
(In fact, in Step 2, we prove that $T_\ast$ is the maximal existence time of the smooth solution $(\rho,u)$ by showing $\| u_x(\cdot, t) \|_{L^\infty} \nearrow \infty$ as $t\nearrow T_*$.)
   
%
%
%

Furthermore, we claim that $|T_\ast| = O( \ve^{2})$.
	By \eqref{tau_dec} together with  existence of the solution up to $T_\ast$, we see that  
\begin{equation*}
|\dot{\tau}|\leq Ce^{-s}, \quad t\in [-\ve, T_\ast]. 
\end{equation*}	
	  Using  this, we have 
	\[
	|\tau(t)|\leq \int^{t}_{-\veps}|\dot{\tau}(t')|\,dt' \leq C\veps \int^{s}_{-\log\veps} e^{-s'}\,ds' \leq  C\veps^{2},
	\] where we have used \eqref{dx-dy}. 
	Since  $\tau(T_\ast)=T_\ast$, we see that $|T_\ast|=O(\veps^{2})$. 
	Moreover, thanks to Lemma~\ref{UW_far-p}, we see that $\xi(t)$ and $\kappa(t)$ converge respectively to $\xi(T_\ast)\in\mathbb{R}$ and $\kappa(T_\ast)\in\mathbb{R}$ as $t\nearrow T_\ast$. 
	%
	%
	%
	%
	%
	%

Step 2: We prove the assertion $(i)$ and that $T_\ast$ is the maximal existence time.
We first show that $u$ has $C^{1/3}$ regularity at the blow-up time. By a standard continuation argument as in Section~\ref{global-conti} and Proposition~\ref{mainprop}, we see that \eqref{Utildey_M-str} holds globally, i.e.,  
\begin{equation*}
	(y^{2/3}+8)|U_y(y,s)-\overline{U}'(y)| \le \frac{24}{25} \quad  \text{ for all } s\ge s_0, \ \  y\in \mathbb{R}.
\end{equation*}
From this and \eqref{y-w-y}, we have 
\begin{equation}\label{Wydec_fin_2}
	|U_y (y,s) | < \frac{1}{y^{2/3}+8}+|\overline{U}'(y) |\leq \frac{C}{y^{2/3}+1} 
\end{equation} 
for some   constant $C>0$. 
Using this, we have  
\begin{equation}\label{not-zero}
	{\frac{|U(y,s)-U(y',s)|}{{|y-y'|}^{1/3}}} =  \frac{1}{{|y-y'|}^{1/3}} {\left|\int_{y'}^y U_y(\tilde y,s) d\tilde y \right|} \le   \frac{C}{{|y-y'|}^{1/3}}  \left| \int_{y'}^{y}{(1+\tilde y^2)^{-1/3}\,d\tilde y} \right| \lesssim 1.
\end{equation} 
Consider any two points $x\neq x'\in \mathbb{R}$. 
By the change of variables as \eqref{change_var} and \eqref{WZPhi-p}, we see that  \eqref{not-zero} implies 
\begin{equation*}\label{Eq_holder}
	\frac{|u(x,t)-u(x',t)|}{{|x-x'|}^{1/3}}=\frac{|U(y,s)-U(y',s)|}{{|y-y'|}^{1/3}} \lesssim 1
\end{equation*}
for all $t\in[-\ve, T_\ast]$. 
Hence, we get   
\begin{equation*}
	\sup_{t\in[-\varepsilon, T_\ast]} [u(\cdot, t)]_{C^{1/3}(\mathbb{R}) } \lesssim 1, 
\end{equation*}
i.e., $u \in L^{\infty}([-\varepsilon,T_\ast];C^{1/3}(\mathbb{R}))$.


Next, we claim that when $\beta > 1/3$, $C^\beta$ H\"older norm of $u$ blows up as $t\nearrow T_\ast$. 
We note from  \eqref{change_var} and \eqref{WZPhi-p} that   
\begin{equation*}
	\frac{|u(x,t)-u(x',t)|}{|x-x'|^\beta}=   e^{(\frac{3}{2}\beta-\frac{1}{2})s} \frac{|U(y,s)-U(y',s)|}{|y-y'|^{\beta}}. 
\end{equation*} 
%
 Let $y'=0$ and $y \in(0,\delta_0)$ for some $\delta_0\ll1$. 
By the mean value theorem,
\begin{equation*}
	\frac{|U(y,s)-U(0,s)|}{|y|^{\beta}}=|U_y(\overline{y},s)||y|^{1-\beta} 
\end{equation*}
for some $\overline{y}\in (0,\delta_0)$.
From \eqref{constraint-p} and \eqref{EP2_1D1-p},
we see that $|U_y(\overline{y},s)| \geq 1/2$ for $\delta_0\ll1$, which implies that $|U_y(\overline{y},s)||y|^{1-\beta}\ge c_0$ for some $c_0>0$. 
In view of this, we have 
\begin{equation}\label{LB-blow}
	[u(\cdot, t)]_{C^{\beta}(\mathbb{R})}\geq  e^{(\frac{3}{2}\beta-\frac{1}{2})s}  \frac{|U(y,s)-U(0,s)|}{|y|^{\beta}}= e^{(\frac{3}{2}\beta-\frac{1}{2})s} |U_y(\overline{y},s)||y|^{1-\beta}  \ge  c_0  e^{(\frac{3}{2}\beta-\frac{1}{2})s}. 
\end{equation}
Since $\beta>1/3$, 	we conclude that $[u(\cdot, t)]_{C^{\beta}(\mathbb{R})}\to\infty$ as $t\nearrow T_\ast$, i.e., $s\to \infty$. 

Step 3: We shall prove the assertion $(ii)$. 
We note that  
\begin{equation*} 
	|U(y,s)-U(y',s)| \le \int_{y'}^y |U_y (\tilde y,s) | d\tilde y \le C \int_{y'}^y  (1+\tilde y^2)^{(\beta-1)/2} d\tilde y \leq C |y-y'|^\beta,
\end{equation*} 
where the second to the last inquality holds thanks to  \eqref{Wydec_fin_2}, i.e., $$|U_y (y,s) |  \leq C(1+y^2)^{-1/3} \le C (1+y^2)^{(\beta-1)/2}$$ for $\beta>1/3$. 
Then again from  \eqref{change_var} and \eqref{WZPhi-p}, we have
\begin{equation} \label{UB-blow}
	\begin{split}
		\frac{|u(x,t)-u(x',t)|}{|x-x'|^\beta} & = e^{(\frac{3}{2}\beta-\frac{1}{2})s} \frac{|U(y,s)-U(y',s)|}{|y-y'|^{\beta}} \le C e^{(\frac{3}{2}\beta-\frac{1}{2})s}
	\end{split} 
\end{equation} 
for some $C>0$. 
Note  from \eqref{tau_dec} that 
\begin{equation*}\label{tau-T*}
	\frac{1}{2}(T_\ast-t) \leq \tau(t) - t = \int_t^{T_\ast} (1-\dot{\tau})\,dt' \leq \frac{3}{2}(T_\ast-t).
\end{equation*}
This, together with \eqref{LB-blow} and \eqref{UB-blow}, yields the desired blow-up rate as 
\[\left[ u(\cdot, t) \right]_{C^\beta} \sim (T_\ast-t)^{-\frac{3\beta-1}{2}}.\]
 Thanks to \eqref{xibound} in Lemma~\ref{UW_far-p}, $\xi(t)$ converges to $\xi(T_\ast)$ as $t\nearrow T_\ast$ and $x=\xi(T_\ast)$ is the blow up location of $u_x$. In particular, the blow up rate of $\| \partial_x u(\cdot, t) \|_{L^\infty}$ is $(t-T_\ast)^{-1}$.

Step 4: We shall prove $(iii)$. 
We first note by \eqref{Wweak-str} in Lemma~\ref{mainprop} that 
	\begin{equation*}
		\left((x-\xi(t))^{2/3}  +  (\tau(t)-t)\right)|\rho(x,t)-1|=(y^{2/3}+1)|P(y,s)|\lesssim 1. 
	\end{equation*}  
	On the other hand, we show  that $(y^{2/3}+1)|P(y,s)|$ has a uniform positive lower bound near $y=0$, i.e.,  there exists a constant $\Lambda=\Lambda(\overline{s},h)>0$ satisfying 
	\begin{equation}\label{claim'}
				(y^{2/3}+1)|P(y,s)| \ge   \Lambda, \qquad s\geq \overline{s}, \quad |y|\leq h
	\end{equation}
 for some $\overline{s}>s_0,h>0$. 
	This will complete the proof of $(iii)$.

	In what follows, we show \eqref{claim'}. 
%
We use the following three inequalities from  Lemma ~\ref{Pylem}, ~\ref{Phider_lem} and ~\ref{UW_far-p} in Section~\ref{sec3-boot}:
\begin{equation}\label{C123}
\begin{split}	& \|P_y(\cdot,s)\|_{L^{\infty}}+\|P_{yy}(\cdot,s)\|_{L^{\infty}}\leq C, 
\\
& \|\Phi_{yy}(\cdot,s)\|_{L^{\infty}} \leq C e^{-2s}, \quad |e^{s/2}(\kappa-\dot{\xi})|\leq C e^{-s},
\end{split}
\end{equation}
where $C>0$ is some   constant.
%
%
%
	From \eqref{EP2_1-p}, we have  
	\begin{equation}\label{rhoeq}
		\partial_s(e^sP+1)+\left(\frac{U}{1-\dot{\tau}}+\frac{3}{2}y\right)(e^sP+1)_y+\frac{U_y}{1-\dot{\tau}}(e^sP+1)=-\frac{e^{s/2}(\kappa-\dot{\xi})}{1-\dot{\tau}}e^sP_y.
	\end{equation}
	Let $\psi(y;s)$ be the  characteristic curve satisfying 
\begin{equation}\label{psi-00}
 \frac{d}{ds} \psi (y;s)=\frac{U(\psi(y;s),s)}{1-\dot{\tau}}+\frac{3}{2}\psi(y;s), \qquad  \psi(y;s_0)=y.
 \end{equation}  
	By integrating \eqref{rhoeq} along $\psi$, we have 
	\begin{multline*}
		(e^sP+1)(\psi(y;s),s)
  \\
  =(e^{s_0}P(y,s_0)+1)e^{-\int^s_{s_0}\frac{U_y(\psi(y;s'),s')}{1-\dot{\tau}}\,ds'} 
  -\int^s_{s_0}e^{-\int^s_{s'}\frac{U_y(\psi(y;s''),s'')}{1-\dot{\tau}}\,ds''}\left(\frac{e^{s'/2}(\kappa-\dot{\xi})}{1-\dot{\tau}}e^{s'}P_y(\psi(y;s'),s')\right)\,ds'.
	\end{multline*}
	 We note that when $y=0$, the trivial function $\psi(0;s)\equiv0$ is a solution to  \eqref{psi-00}. 
	 Using this together with $U_y(0,s)=-1$, we have 	 
	\begin{equation}\label{rho_integ}
		\begin{split}
			(e^sP+1)(0,s)&=(e^{s_0}P(0,s_0)+1)e^{\int^s_{s_0}\frac{1}{1-\dot{\tau}}\,ds'}\underbrace{-\int^s_{s_0}e^{\int^s_{s'}\frac{1}{1-\dot{\tau}}\,ds''}\left(\frac{e^{s'/2}(\kappa-\dot{\xi})}{1-\dot{\tau}}e^{s'}P_y(0,s')\right)\,ds'}_{=:I_1}.
		\end{split}
	\end{equation}
	Now, to evaluate  $I_1$, we first derive the $s$-decay of $P_y(0,s)$. By substituting $y=0$ into \eqref{EP2_3D1-p} with \eqref{constraint-p}, we find  
	\begin{equation*}
		\partial_sP_y(0,s)+\left(\frac{5}{2}-\frac{2}{1-\dot{\tau}}\right)P_y(0,s)=-\frac{e^{s/2}(\kappa-\dot{\xi})}{1-\dot{\tau}}P_{yy}(0,s),
	\end{equation*}
	  which we integrate  to have 
	\begin{equation}\label{Py0}
		\begin{split}
			|P_y(0,s)|&\leq |P_y(0,s_0)|e^{-\int^s_{s_0}\left(\frac{5}{2}-\frac{2}{1-\dot{\tau}}\right)\,ds'}+\int^s_{s_0}e^{-\int^s_{s'}\left(\frac{5}{2}-\frac{2}{1-\dot{\tau}}\right)\,ds''}\frac{e^{s'/2}|\kappa-\dot{\xi}|}{1-\dot{\tau}}\left|P_{yy}(0,s')\right|\,ds'
			\\
			&\leq |P_y(0,s_0)|e^{-(s-s_0)/4}+C\int^s_{s_0}e^{-(s-s')/4}e^{-s'}\,ds'
			\\
			&\leq \veps^{-1/4}|P_y(0,s_0)|e^{-s/4}+C\veps^{3/4} e^{-s/4}
			\\
			&\leq  {(C\veps^{3/4}+\veps^{-1/4}|P_y(0,s_0)| )}  e^{-s/4}.
		\end{split}
	\end{equation}
	Here, we used \eqref{tau_dec} and \eqref{C123} for the second inequality.
	Then using \eqref{tau_dec}, \eqref{C123} and \eqref{Py0}, we obtain
	\begin{equation*}
		\begin{split}
			|I_1|&\leq e^{\int^s_{s_0}\frac{1}{1-\dot{\tau}}\,ds'}\int^{s}_{s_0}e^{s'}\left|P_y(0,s')\right|\frac{e^{s'/2}|\kappa-\dot{\xi}|}{1-\dot{\tau}}\,ds' 
			\\
			&\leq C e^{\int^s_{s_0}\frac{1}{1-\dot{\tau}}\,ds'}\int^{s}_{s_0}\left|P_y(0,s')\right|\,ds'
			\\ 
			&\leq  C\veps\left(1+\veps^{-1}|P_y(0,s_0)|\right) e^{\int^s_{s_0}\frac{1}{1-\dot{\tau}}}\,ds'.
		\end{split}
	\end{equation*}
 Using this for  \eqref{rho_integ}, one has 
	\begin{equation}\label{P-low}
		\begin{split}
				e^sP(0,s)+1&\geq \left(\rho_0(0)- C\veps (1+\veps^{3/2}| \partial_x \rho_0 (0)| )\right)e^{\int^s_{s_0}\frac{1}{1-\dot{\tau}}\,ds'} 
				 \geq  \frac{ \rho_0(0) \veps }{4}e^s .
		\end{split}
	\end{equation}
	Here, we used  \eqref{tau_dec}  to get 
		\begin{equation*}
		 e^{\int^s_{s_0}\frac{1}{1-\dot{\tau}}\,ds'} \geq  e^{\int_{s_0}^s \frac{1}{1+Ce^{-s'}} \,ds' } \geq  e^{\int_{s_0}^s 1-Ce^{-s'}\, ds' } =\veps e^s e^{C(e^{-s}-\veps)} \geq \veps e^{-C\veps}e^s\geq  \frac{\ve}{2} e^{s}
	\end{equation*}
	for sufficiently small $\veps>0$.
	\eqref{P-low} implies that there is $\Lambda_0=\Lambda_0(\ve)>0$ such that $P(0,s)\ge \Lambda_0$ for any sufficiently large $s$. Thanks to \eqref{C123}, i.e., $P_y$ is uniformly bounded,   there exists $(\overline{s},h)$ such that  \eqref{claim'} holds. We are done. 
\qed

\section{Closure of Bootstrap assumptions}\label{sec3-boot}
In this section, we close the bootstrap assumptions. First we begin with some crude estimates for $P$ and its derivatives. 
\begin{lemma}\label{Lemma1}
Under the assumptions of Proposition~\ref{mainprop}, it holds that for all $s\in[s_0,\sigma_1]$,   
	\begin{equation}\label{EP2_1P2}
		\|P(\cdot,s)\|_{L^{\infty}} \leq (1+C\veps)\|P(\cdot,s_0)\|_{L^{\infty}}+C\veps.
	\end{equation}
\end{lemma}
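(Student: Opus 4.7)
The plan is to propagate the $L^\infty$ bound on $P$ by running the method of characteristics on the transport equation \eqref{EP2_1-p}. I would define the characteristic flow $\psi(y_0,\cdot)$ by
\[
\frac{d}{ds}\psi(y_0,s) = \mathcal{U}(\psi(y_0,s),s), \qquad \psi(y_0,s_0)=y_0,
\]
which is globally defined and a diffeomorphism in $y_0$ at each fixed $s$, since $\mathcal{U}$ is smooth with at most linear growth in $y$ (the $\tfrac32 y$ term dominating, with $U/(1-\dot\tau)$ bounded by $|y|(1+O(\veps))$ from $|U_y|\le 1$ and $U(0,s)=0$). Along $\psi$, equation \eqref{EP2_1-p} reduces to the scalar linear ODE
\[
\frac{d}{ds}P(\psi(y_0,s),s) + D(s)\,P(\psi(y_0,s),s) = F(s),
\]
with damping coefficient $D(s):=1+\frac{U_y(\psi,s)}{1-\dot\tau(s)}$ and source $F(s):=-\frac{e^{-s}U_y(\psi,s)}{1-\dot\tau(s)}$. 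The variation of constants formula then reduces everything to controlling the integrating factor $\exp\bigl(-\int_{s'}^{s} D(s'')\,ds''\bigr)$ and the time integral of $|F|$.

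The central observation, and the only place where care is needed, is that the negative part of $D(s)$ and the size of $F(s)$ are both integrably small, with time integrals of order $\veps$. Using $|U_y|\le 1$ from \eqref{Uy1-p} together with $|\dot\tau|\le Ce^{-s}$ and $\frac{1}{1-\dot\tau}=1+O(\veps)$ from \eqref{tau_dec}, one gets
\[
D(s) \;\ge\; 1-\frac{1}{1-\dot\tau} \;=\; -\frac{\dot\tau}{1-\dot\tau} \;\ge\; -Ce^{-s}, \qquad |F(s)| \;\le\; (1+C\veps)e^{-s}.
\]
Since $\int_{s_0}^{\infty} e^{-s'}\,ds' = e^{-s_0}=\veps$, the integrating factor is bounded by $e^{C\veps}\le 1+C\veps$ uniformly over $s_0\le s'\le s\le \sigma_1$, and the time integral of $|F|$ is bounded by $(1+C\veps)\veps$.

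Plugging these two bounds into the variation of constants formula yields
\[
|P(\psi(y_0,s),s)| \;\le\; (1+C\veps)|P(y_0,s_0)| + C\veps,
\]
and taking the supremum over $y_0\in\mathbb{R}$ (equivalently over $y$, since $y\mapsto\psi(y_0,s)$ is a diffeomorphism at each fixed $s$) gives exactly \eqref{EP2_1P2}. The main, and really only, obstacle is securing the uniform lower bound $D(s)\ge -Ce^{-s}$: the cancellation between the constant $1$ and $U_y/(1-\dot\tau)$ is essentially perfect wherever $U_y\approx -1$ (which happens at the origin by the constraint $U_y(0,s)=-1$), and what saves the argument is that the residual is exactly $\dot\tau/(1-\dot\tau)=O(e^{-s})$, an integrable decay that turns a potentially exponentially growing factor into the benign $(1+C\veps)$.
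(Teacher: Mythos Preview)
Your argument is correct and is essentially the same as the paper's: integrate \eqref{EP2_1-p} along the characteristics of $\mathcal{U}$ and control the integrating factor and source using $|U_y|\le 1$ together with $|\dot\tau|\le Ce^{-s}$. The only cosmetic difference is that the paper rewrites the damping as $D^P:=1+U_y\ge 0$ and pushes the $\dot\tau$-correction $\frac{\dot\tau U_y}{1-\dot\tau}P$ into the forcing, whereas you keep the full coefficient $1+\frac{U_y}{1-\dot\tau}$ and show its negative part is $O(e^{-s})$; both lead to the same $(1+C\veps)$ loss.
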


\begin{proof}

Rewriting \eqref{EP2_1-p}, we obtain
\begin{equation*}
	\partial_s P+ D^{P} P +\mathcal{U} P_y= F^{P}
\end{equation*}
with $\mathcal{U}$ defined in \eqref{U-p}, i.e., 
\begin{equation*}\label{U-p-1}
	\mathcal{U}:= \frac{U}{1-\dot{\tau}} + \frac{3}{2}y + \frac{e^{s/2}(\kappa-\dot{\xi})}{1-\dot{\tau}} 
\end{equation*}
and
\[
D^{P} := 1 + U_y, \quad F^{P} :=\frac{-\dot{\tau}U_yP}{1-\dot{\tau}}- \frac{e^{-s}U_y}{1-\dot{\tau}}.
\]
Integrating the above equation along $\psi(y;s)$ satisfying $\frac{d}{ds}\psi=\mathcal{U}(\psi,s)$ and $\psi(y;s_0)=y$, we obtain
\begin{equation}\label{Ptemp'}
	P(\psi(y;s),s)=P(y,s_0)e^{-\int^s_{s_0}(D^{P}\circ \psi)\,dr}+\int^s_{s_0}e^{-\int^s_{s'}(D^{P}\circ\psi)\,dr}(F^{P}\circ\psi)\,ds'.
\end{equation}
We note from \eqref{Uy1-p}  that 
$D^{P} \geq 0$, 
 and 
 from  \eqref{Uy1-p}, \eqref{tau_dec}, \eqref{d-t-e-p} 
  that 
\begin{equation*}
	\|F^{P}\|_{L^{\infty}}\leq \frac{1}{1-\dot{\tau}}|\dot{\tau}|\|U_y\|_{L^{\infty}}\|P\|_{L^{\infty}} +\frac{1}{1-\dot{\tau}}e^{-s}\|U_y\|_{L^{\infty}}\leq C(1+ \|P\|_{L^{\infty}}) e^{-s}. 
\end{equation*}
Combining all for \eqref{Ptemp'} yields the desired estimate \eqref{EP2_1P2}.  
\end{proof}

\begin{lemma}\label{Pylem}
	Under the assumptions of Proposition~\ref{mainprop}, we have 
	\begin{equation}
		\|\partial_y^jP(\cdot,s)\|_{L^{\infty}} \leq C_j \qquad (j=1,2,3) \label{EP2_Pder}
	\end{equation}
	for all $s\in[s_0,\sigma_1]$. Here,  $C_j=C_j(M, \sum ^{j}_{i=0}\|\partial_y^iP(\cdot,s_0)\|_{L^{\infty}})$ are positive constants. 
\end{lemma}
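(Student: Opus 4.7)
The plan is to mimic the characteristic argument used in Lemma~\ref{Lemma1}, but applied successively to the differentiated equations \eqref{EP2_3D1-p}, \eqref{EP2_3D2-p} and \eqref{EP2_3D3-p}. Each of these equations has the form
\[
\partial_s \partial_y^j P + D^{(j)}\partial_y^j P + \mathcal{U}\,\partial_y^{j+1} P = F^{(j)},
\qquad j=1,2,3,
\]
with damping
\[
D^{(1)}=\tfrac{5}{2}+\tfrac{2U_y}{1-\dot\tau},\qquad
D^{(2)}=4+\tfrac{3U_y}{1-\dot\tau},\qquad
D^{(3)}=\tfrac{11}{2}+\tfrac{4U_y}{1-\dot\tau}.
\]
Since $|U_y|\le 1$ by \eqref{Uy1-p} and $1/(1-\dot\tau)=1+O(\veps)$ by \eqref{d-t-e-p}, each $D^{(j)}$ admits a uniform positive lower bound (e.g.\ $D^{(1)}\ge 1/2$, $D^{(2)}\ge 1$, $D^{(3)}\ge 3/2$) once $\veps_0$ is small enough. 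This is the key structural feature that makes the bootstrap propagate.

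First I would integrate the $P_y$-equation \eqref{EP2_3D1-p} along the characteristic curve $\psi(y;s)$ generated by $\mathcal{U}$, exactly as in \eqref{Ptemp'}. The forcing term is
\[
F^{(1)}=-\tfrac{U_{yy}P}{1-\dot\tau}-\tfrac{e^{-s}U_{yy}}{1-\dot\tau},
\]
which is bounded in $L^\infty$ by a constant depending on $M$ and $\|P(\cdot,s_0)\|_{L^\infty}$, thanks to the bootstrap bound $|U_{yy}|\le 15$ from \eqref{EP2_1D3-p} and Lemma~\ref{Lemma1}. Using Gronwall with the positive damping $D^{(1)}\ge 1/2$ then yields
\[
\|P_y(\cdot,s)\|_{L^\infty}\le \|P_y(\cdot,s_0)\|_{L^\infty}e^{-(s-s_0)/2}+C_1
\]
for some $C_1=C_1(M,\|P(\cdot,s_0)\|_{L^\infty},\|P_y(\cdot,s_0)\|_{L^\infty})$.

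Next, I would repeat the same characteristic integration for \eqref{EP2_3D2-p}. The forcing term now involves $P_y$, which is controlled by the previous step, together with $P$, $U_{yy}$, and $\partial_y^3 U$ bounded by $M^{5/6}$ via \eqref{EP2_1D5-p}. With damping $D^{(2)}\ge 1$, the same Gronwall argument gives a uniform-in-$s$ bound $\|P_{yy}(\cdot,s)\|_{L^\infty}\le C_2$. Finally, the $\partial_y^3 P$-equation \eqref{EP2_3D3-p} has forcing bounded by $\partial_y^4 U$ (controlled by $M$ via \eqref{EP2_1D4-p}), $\partial_y^3 U$, $U_{yy}$, and the previously controlled $P,P_y,P_{yy}$; combined with $D^{(3)}\ge 3/2$ this closes the estimate for $\|\partial_y^3 P\|_{L^\infty}$.

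I expect no real obstacle: the only thing to verify carefully is that the damping is strictly positive after accounting for the $|\dot\tau|$ correction, and this follows because $j(1-\dot\tau)^{-1}\le j+O(\veps)$ is strictly dominated by the constant $\tfrac{2j+3}{2}$ in $D^{(j)}$ for $j=1,2,3$, once $\veps_0$ is taken small depending on $M$. The constants $C_j$ depend only on $M$ and $\sum_{i=0}^{j}\|\partial_y^i P(\cdot,s_0)\|_{L^\infty}$, as stated.
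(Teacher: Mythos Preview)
Your proposal is correct and follows essentially the same approach as the paper: integrate each of \eqref{EP2_3D1-p}--\eqref{EP2_3D3-p} along the characteristic $\psi$ generated by $\mathcal{U}$, use the uniform positive lower bound on the damping $D^{(j)}$ (from $|U_y|\le 1$ and \eqref{d-t-e-p}), and bound the forcings $F^{(j)}$ sequentially via the bootstrap assumptions \eqref{EP2_1D3-p}, \eqref{EP2_1D5-p}, \eqref{EP2_1D4-p} together with Lemma~\ref{Lemma1} and the already-obtained bounds on lower-order $\partial_y^i P$. The only cosmetic slip is in your illustrative lower bounds: since $\tfrac{j+1}{1-\dot\tau}=j+1+O(\veps)$, one gets $D^{(1)}\ge \tfrac12-O(\veps)$, $D^{(2)}\ge 1-O(\veps)$, $D^{(3)}\ge \tfrac32-O(\veps)$, so the paper records the slightly smaller values $\tfrac14,\tfrac12,1$; this does not affect the argument.
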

\begin{proof}
	Recalling \eqref{EP2_3D1-p}--\eqref{EP2_3D3-p}, $\partial_y^j P$ satisfies 
	\begin{equation}\label{Pder_eq}
		\partial_s \partial_y^j P + D^P_j \partial_y^j P +\mathcal{U}\partial_y^{j+1} P = F^P_j, 
	\end{equation}
	where $\mathcal{U}$ is defined in \eqref{U-p}, and  $D^P_j$ and $F^P_j$, $j=1,2,3$, are given by 
		\begin{subequations}
		\begin{align*}
			&D^P_1(y,s):=\frac{5}{2}+\frac{2U_y}{1-\dot{\tau}},
			\quad D^P_2(y,s):=4+\frac{3U_y}{1-\dot{\tau}},
			\quad D^P_3(y,s):=\frac{11}{2}+\frac{4U_y}{1-\dot{\tau}}
		\end{align*}
	\end{subequations}
	and 
	\begin{subequations}
		\begin{align*}
			&F_1^P (y,s):= - \frac{U_{yy}P}{1-\dot{\tau}} - \frac{e^{-s}U_{yy}}{1-\dot{\tau}},
			\\
			&F^P_2(y,s):= - \frac{ 3U_{yy}P_{y}+\partial_y^3UP}{1-\dot{\tau}}-\frac{e^{- s}}{1-\dot{\tau}}\partial_y^3U,
			\\
			&F^P_3(y,s):= - \frac{\partial_y^4UP+4\partial_y^3UP_y + 6U_{yy}P_{yy}}{1-\dot{\tau}}-\frac{e^{- s}}{1-\dot{\tau}}\partial_y^4U. 
		\end{align*}
	\end{subequations}
	We note that by \eqref{Uy1-p} and \eqref{d-t-e-p},  the lower bounds for $D_j^P$ are obtained as 
	\begin{subequations}
		\begin{align*}
			&D^P_1(y,s)\geq 	\frac{5}{2}-2(1+O(\veps))\geq \frac{1}{4} =: \lambda_1,
			\\
			&D^P_2(y,s) \geq 4-3(1+O(\veps))\geq \frac{1}{2}=: \lambda_2,
			\\
			&D^P_3(y,s)\geq \frac{11}{2}-4(1+O(\veps))\geq 1=: \lambda_3.
		\end{align*}
	\end{subequations}
For the moment, we let $\mu_j\ge0$ be the numbers such that 
	 $\|F_j^P(\cdot,s)\|_{L^{\infty}}\leq \mu_j, j=1,2,3$.
	 By integrating \eqref{Pder_eq} along  $\psi$, which is defined by $\frac{d}{ds}\psi(y;s)=\mathcal{U}$ and $\psi(y;s_0)=y$, we have
	\begin{equation*}\label{P'}
		\partial_y^jP(\psi(y;s),s) = \partial_y^jP(y,s_0)e^{-\int^s_{s_0}(D^P_j\circ\psi)\,ds'}+\int^s_{s_0}e^{-\int^s_{s'}(D^P_j\circ\psi)\,ds''}(F^P_j\circ\psi)\,ds'.
	\end{equation*}
Then in terms of $\lambda_j$ and $\mu_j$, one can obtain  the  bounds as 
	\begin{equation*}
		\|\partial_y^j P(\cdot,s)\|_{L^{\infty}} \leq \|\partial_y^j P(\cdot,s_0)\|_{L^{\infty}} 		e^{-\lambda_j(s-s_0)} + \frac{\mu_j}{\lambda_j},
	\end{equation*}
	which proves \eqref{EP2_Pder}.	

Now it is left to examine $F^P_j$: 		
	By \eqref{EP2_1D3-p},  \eqref{Wweak-p} and \eqref{d-t-e-p}, we see that
	\begin{equation*}
		\begin{split}
			\|F^P_1\|_{L^{\infty}}=\left\| \frac{U_{yy}P}{1-\dot{\tau}}+\frac{e^{-s}U_{yy}}{1-\dot{\tau}} \right\|_{L^{\infty}}
			& \leq 30\left(  \frac{A}{4}+\veps\right) =: \mu_1.
		\end{split}
	\end{equation*}
	Thus, we first get  $\|P_y(\cdot,s)\|_{L^{\infty}} \leq C_1$. With this bound for $P_y$ and  \eqref{EP2_1D3-p}, \eqref{EP2_1D5-p}, \eqref{d-t-e-p}, \eqref{EP2_1P2}, we have 
	\begin{equation*}
		\begin{split}
			\|F^P_2\|_{L^{\infty}}&=\left\|\frac{3U_{yy} P_y+\partial_y^3U P}{1-\dot{\tau}} + \frac{e^{-s}}{1-\dot{\tau}}\partial_y^3U \right\|_{L^{\infty}} 
			\\
			& \leq (1+O(\veps))\left( 45\|P_y\|_{L^{\infty}} + M^{5/6} \|P\|_{L^{\infty}} + e^{-s}M^{5/6} \right) 
			=: \mu_2,
		\end{split}
	\end{equation*}
	which gives  $\|P_{yy}(\cdot,s)\|_{L^{\infty}} \leq C_2$. Next we see that 
	\begin{equation*}
		\begin{split}
			\|F^P_3(\cdot,s)\|_{L^{\infty}}&\leq (1+O(\veps))\left(\| \partial_y^4U P\|_{L^{\infty}}+\|4\partial_y^3U P_y \|_{L^{\infty}}+\| 6U_{yy}P_{yy}\|_{L^{\infty}}+\|\partial_y^4Ue^{-s}\|_{L^{\infty}}\right)
			\\  
			&\leq (1+O(\veps))\left(M\|P(\cdot,s)\|_{L^{\infty}}+4M^{5/6}\|P_y(\cdot,s) \|_{L^{\infty}}+90\| P_{yy}(\cdot,s)\|_{L^{\infty}}+Me^{-s}\right)
			\\  
			& =: \mu_3.
		\end{split}
	\end{equation*}
	Here, the first and second inequalities hold thanks to   \eqref{EP2_1D3-p}, \eqref{EP2_1D5-p}, \eqref{EP2_1D4-p} and \eqref{d-t-e-p}. Also, for the last line, we used   \eqref{EP2_1P2} and the lower bounds for $\partial^j_y P$, $j=1,2$, that are obtained just previously in this proof.
	 This gives $\|\partial_y^3P(\cdot,s)\|_{L^{\infty}} \leq C_3$, which concludes the proof.	
\end{proof}

\begin{lemma}\label{Phider_lem}
	Under the assumptions of Proposition~\ref{mainprop}, there exists   a constant $C>0$ 
	such that for all $s\in[s_0,\sigma_1]$, 
	\begin{equation}\label{Pyes}
		\|\partial_y^{j}\Phi\|_{L^{\infty}}\leq  Ce^{-2s}. \qquad (j=2,3,4,5)
	\end{equation}
\end{lemma}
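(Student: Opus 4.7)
The plan is to use the Poisson equation \eqref{EP2_3-p} as an \emph{algebraic} relation that expresses $\Phi_{yy}$ in terms of $\Phi$ and $P$, and then differentiate it in $y$ repeatedly to express $\partial_y^j\Phi$ for $j=3,4,5$ in terms of lower-order derivatives of $\Phi$ together with $\partial_y^{j-2}P$. The uniform bounds on $\Phi$ and $\Phi_y$ from Lemma~\ref{phi_lem}, the uniform bound on $P$ from the bootstrap assumption \eqref{Wweak-p}, and the bounds on $\partial_y^j P$ ($j=1,2,3$) established in Lemma~\ref{Pylem} will then be combined to give the desired uniform decay.

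Concretely, I would first rewrite \eqref{EP2_3-p} as
\begin{equation*}
\Phi_{yy} = e^{-3s}\bigl(e^{\Phi}-1\bigr) - e^{-2s}P.
\end{equation*}
Since $\|\Phi\|_{L^\infty}\le M_1$ gives $|e^{\Phi}-1|\le Ce^{M_1}$ and $\|P\|_{L^\infty}\le C$ by \eqref{Wweak-p}, we immediately obtain $\|\Phi_{yy}\|_{L^\infty}\le Ce^{-2s}$, which is the $j=2$ case. For $j=3$, differentiating once in $y$,
\begin{equation*}
\Phi_{yyy} = e^{-3s}e^{\Phi}\Phi_y - e^{-2s}P_y,
\end{equation*}
and using $\|\Phi_y\|_{L^\infty}\le M_2 e^{-11s/8}$ from \eqref{Phi_1_M-p} together with $\|P_y\|_{L^\infty}\le C_1$ from \eqref{EP2_Pder} yields $\|\Phi_{yyy}\|_{L^\infty}\le Ce^{-2s}$, since the first term decays like $e^{-3s-11s/8}$, much faster than $e^{-2s}$.

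For $j=4,5$, the same procedure works: differentiating the Poisson identity produces terms of the form $e^{-3s}e^{\Phi}\cdot(\text{polynomial in }\Phi_y,\dots,\Phi_{y\cdots y})$ plus $-e^{-2s}\partial_y^{j-2}P$. For instance,
\begin{equation*}
\Phi_{yyyy} = e^{-3s}e^{\Phi}\bigl[(\Phi_y)^2+\Phi_{yy}\bigr] - e^{-2s}P_{yy},
\end{equation*}
\begin{equation*}
\Phi_{yyyyy} = e^{-3s}e^{\Phi}\bigl[(\Phi_y)^3 + 3\Phi_y\Phi_{yy} + \Phi_{yyy}\bigr] - e^{-2s}P_{yyy}.
\end{equation*}
In every polynomial term, every factor of $\Phi_y$ is accompanied by at least $e^{-11s/8}$, and each factor of $\Phi_{yy}$ or $\Phi_{yyy}$ has already been shown to carry an $e^{-2s}$. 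Combined with the prefactor $e^{-3s}$, these polynomial terms decay strictly faster than $e^{-2s}$. The rate $e^{-2s}$ therefore comes entirely from the $-e^{-2s}\partial_y^{j-2}P$ contribution, which is controlled by Lemma~\ref{Pylem}.

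No true obstacle arises here; the argument is essentially algebraic. The only point that requires a little care is to ensure the inductive bookkeeping: each step uses the previously established bound $\|\partial_y^{j-1}\Phi\|_{L^\infty}\le Ce^{-2s}$ (or the stronger bound on $\Phi_y$), so the lemma is best proved by induction on $j$ going from $j=2$ up to $j=5$, with all constants depending on $M_1,M_2$ and the $C_j$ from Lemma~\ref{Pylem}.
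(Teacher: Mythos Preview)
Your proposal is correct and matches the paper's proof essentially line for line: both rewrite the Poisson equation \eqref{EP2_3-p} as an algebraic identity for $\Phi_{yy}$, differentiate it successively in $y$, and bound each term using $\|\Phi\|_{L^\infty}\le M_1$, $\|\Phi_y\|_{L^\infty}\le M_2 e^{-11s/8}$ from Lemma~\ref{phi_lem}, the bound on $P$ from \eqref{Wweak-p}, and the bounds on $\partial_y^j P$ from Lemma~\ref{Pylem}. The only cosmetic difference is that the paper multiplies through by $e^s$ before displaying the identities, but the estimates and the inductive structure from $j=2$ up to $j=5$ are identical.
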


\begin{proof}
From \eqref{EP2_3-p} and   \eqref{EP2_3D-p}, we have 
\begin{subequations}
	\begin{align*}
		\Phi_{yy}e^s&=e^{\Phi-2s}-e^{-2s}-Pe^{-s},
		\\
		\partial_y^3\Phi e^s&=\Phi_ye^{\Phi-2s}-P_ye^{-s},
		\\
		\partial_y^4\Phi e^s&=(\Phi_{yy}+\Phi_y^2)e^{\Phi-2s}-P_{yy}e^{-s},
		\\
		\partial_y^5\Phi e^s&=(\partial_y^3\Phi+3\Phi_y\Phi_{yy}+\Phi_y^3)e^{\Phi-2s}-\partial_y^3Pe^{-s}.
	\end{align*}
\end{subequations}
 We see from \eqref{Phi_0_M} and \eqref{Phi_1_M-p} that 
\begin{equation*}
	\|e^{\Phi-2s}-e^{-2s}\|_{L^{\infty}}+\|\Phi_y e^{\Phi-2s}\|_{L^{\infty}}\leq Ce^{-2s}
\end{equation*} 
for some $C>0$.
Then thanks to \eqref{EP2_1P2} and \eqref{EP2_Pder},  we have  $\|\Phi_{yy}\|_{L^{\infty}}\lesssim e^{-2s}$ and $\|\partial_y^3\Phi \|_{L^{\infty}}\lesssim e^{-2s}$. Furthermore, using these bounds,
 we  also  see  that 
 \begin{subequations}
 	\begin{align*}
 	&\|(\Phi_{yy}+\Phi_y^2)e^{\Phi-2s}\|_{L^{\infty}}\leq Ce^{-2s},
 	\\
 	&\|(\partial_y^3\Phi+3\Phi_y\Phi_{yy}+\Phi_y^3)e^{\Phi-2s} \|_{L^{\infty}} \leq Ce^{-2s} 
 	\end{align*}
 \end{subequations} 
 for some $C>0$. 
These together with \eqref{EP2_1P2} and \eqref{EP2_Pder} yield the desired bounds for $\partial_y^4\Phi$ and $\partial_y^5\Phi$. 
This completes the proof of \eqref{Pyes}.
\end{proof}

\begin{lemma}\label{UW_far-p}
Under the assumptions of Proposition~\ref{mainprop}, it holds that   for all $s\in[s_0,\sigma_1]$,
	\begin{equation}
		|e^{s/2}(\kappa-\dot{\xi})|=\left|\frac{e^s\partial_y^3\Phi(0,s)}{\partial_y^3U(0,s)}\right|\leq Ce^{-s},
		\label{temp_1-p}
	\end{equation}
	\begin{equation}\label{xibound}
		|\kappa(t) |\leq 1, \qquad |\xi(t) |\leq 1,
	\end{equation}
	\begin{equation}\label{U_far}
		\inf_{\{|y|>1, s\in[s_0,\sigma_1]\}} \mathcal{U}(y,s) \frac{y}{|y|}  \geq \frac{1}{8},
	\end{equation}
	where $\mathcal{U}$ is defined in \eqref{U-p}, and $C>0$ is a constant.
\end{lemma}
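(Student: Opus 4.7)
The plan is to dispatch the three assertions of Lemma~\ref{UW_far-p} in turn, relying entirely on the modulation equations \eqref{modulation} and the decay estimates already established in Lemmas~\ref{phi_lem}, \ref{Pylem}, and \ref{Phider_lem}.

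First, I would obtain \eqref{temp_1-p} directly from \eqref{Eq_Modul2-p}, which rearranges to $\kappa(t)-\dot\xi(t)=-e^{s/2}\partial_y^3\Phi(0,s)/\partial_y^3U(0,s)$; multiplying by $e^{s/2}$ gives the equality in \eqref{temp_1-p}. The inequality $\leq Ce^{-s}$ then follows by combining $|\partial_y^3\Phi(0,s)|\leq Ce^{-2s}$ from Lemma~\ref{Phider_lem} with the lower bound $|\partial_y^3U(0,s)|\geq 5$, which is guaranteed by the bootstrap assumption \eqref{EP2_1D2-p}.

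Second, for \eqref{xibound} I would integrate \eqref{Eq_Modul0-p} and \eqref{Eq_Modul2-p} in $t$, converting to the $s$-variable via the relation $dt/ds=e^{-s}/(1-\dot\tau)$, which follows from \eqref{dx-dy} and \eqref{d-t-e-p}. For $\dot\kappa$, Lemma~\ref{Phider_lem} bounds $|e^{3s/2}\partial_y^3\Phi/\partial_y^3U|$ by $Ce^{-s/2}$, while Lemma~\ref{phi_lem} yields only $|e^{3s/2}\Phi_y|\leq M_2 e^{s/8}$, so
\begin{equation*}
|\dot\kappa|\leq Ce^{-s/2}+M_2 e^{s/8}.
\end{equation*}
The key point is that the $e^{s/8}$-growth is offset by the $e^{-s}$-factor in $dt/ds$, so that
\begin{equation*}
|\kappa(t)|\leq \int_{s_0}^{s}\bigl(Ce^{-s'/2}+M_2 e^{s'/8}\bigr)e^{-s'}(1+O(\veps))\,ds'\leq C(\veps^{3/2}+\veps^{7/8})\leq 1
\end{equation*}
for $\veps$ sufficiently small, using $s_0=-\log\veps$. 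With $|\kappa|\leq 1$ in hand, \eqref{Eq_Modul2-p} gives $|\dot\xi|\leq Ce^{-3s/2}+1$, and the same change of variables produces $|\xi(t)|\leq C\veps\leq 1$.

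Third, for \eqref{U_far} I would exploit the linear growth of $\tfrac{3}{2}y$ in $\mathcal{U}$ against the linear bound on $U$. The bootstrap assumption \eqref{EP2_1D1-p} together with $|\overline U'|\leq 1$ (immediate from \eqref{Burgers_SS'}) yields $\|U_y\|_{L^\infty}\leq 1$, and hence $|U(y,s)|\leq |y|$ since $U(0,s)=0$. Combining with \eqref{d-t-e-p} for the $(1-\dot\tau)^{-1}$ factor and \eqref{temp_1-p} for the $y$-independent term in \eqref{U-p}, for $y\geq 1$ one obtains
\begin{equation*}
\mathcal{U}(y,s)\geq -y(1+C\veps)+\tfrac{3}{2}y-Ce^{-s}(1+O(\veps))\geq y\bigl(\tfrac{1}{2}-C\veps\bigr)-C\veps,
\end{equation*}
which is bounded below by $1/8$ once $\veps$ is small. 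The case $y\leq -1$ is symmetric and gives $\mathcal{U}\leq -1/8$, yielding \eqref{U_far}. The main obstacle is precisely the apparent $e^{s/8}$-growth of $e^{3s/2}\Phi_y$ in the ODE for $\kappa$; the resolution is that the integration is in $t$ rather than $s$, and the self-similar time rescaling converts this into an integrable $e^{-7s/8}$, producing an $O(\veps^{7/8})$ bound that is swallowed by taking $\veps_0$ small.
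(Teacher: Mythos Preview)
Your proposal is correct and follows essentially the same approach as the paper: the identity in \eqref{temp_1-p} from \eqref{Eq_Modul2-p} combined with \eqref{EP2_1D2-p} and Lemma~\ref{Phider_lem}; the integration of $\dot\kappa$ and $\dot\xi$ in $t$ with the $e^{-s}$ Jacobian absorbing the $e^{s/8}$ growth from \eqref{Phi_1_M-p}; and the linear bound $|U|\leq |y|$ against $\tfrac{3}{2}y$ for \eqref{U_far}. The only cosmetic difference is that the paper records $|\dot\xi|\leq 2$ rather than your sharper $Ce^{-3s/2}+1$, but the conclusion is the same.
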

\begin{proof}
	From \eqref{Eq_Modul2-p}, we see that 
	\begin{equation}\label{ax2}
		|e^{s/2}(\kappa-\dot{\xi})|=\left|\frac{e^s\partial_y^3\Phi(0,s)}{\partial_y^3U(0,s)}\right| \leq Ce^{-s},
	\end{equation}
	where we have used \eqref{EP2_1D2-p} and \eqref{Pyes} for the last inequality. 
	
	To prove \eqref{xibound}, we see from \eqref{Eq_Modul0-p} that there exists a  constant $C>0$ such that 
	\begin{equation*}
		|\dot{\kappa}|\leq \left|\frac{e^{3s/2}\partial_y^3\Phi(0,s)}{\partial_y^3U(0,s)}\right|+e^{3s/2}|\Phi_y(0,s)|\leq C e^{s/8},
	\end{equation*}
	 where we have used \eqref{Phi_1_M-p} and \eqref{ax2}. Therefore, we have
	\begin{equation}\label{kap_bound}
		\begin{split}
			|\kappa|&\leq \int^t_{-\veps} |\dot{\kappa}|\,dt'
			=\int^s_{s_0}|\dot{\kappa}|\frac{e^{-s'}}{1-\dot{\tau}}\,ds' 
			\leq C\int^s_{s_0}e^{-7s'/8}ds'
			 \leq 1.
		\end{split}
	\end{equation}
	Applying \eqref{ax2} and \eqref{kap_bound} to \eqref{Eq_Modul2-p}, we have that for all sufficiently small $\veps>0$,
	\begin{equation*}
		\begin{split}
			|\dot{\xi}|\leq \left|\frac{e^{s/2}\partial_y^3\Phi(0,s)}{\partial_y^3U(0,s)}\right|+|\kappa| \leq 2,
		\end{split}
	\end{equation*}
	from which we obtain the bound for $|\xi|$ in \eqref{xibound}. This finishes the proof of \eqref{xibound}.
	
	Using \eqref{Uy1-p},  \eqref{d-t-e-p} and \eqref{temp_1-p}, one can check that  $\mathcal{U}$ satisfies
	\begin{subequations}
		\begin{align*}
			&\mathcal{U}(y,s)\geq 	-\frac{y}{1-\dot{\tau}}+\frac{3y}{2}-\frac{Ce^{-s}}{1-\dot{\tau}}\geq \frac{1}{4}y-C\veps \quad \text{ for }y\geq 0,
			\\
			&\mathcal{U}(y,s)\leq 		-\frac{y}{1-\dot{\tau}}+\frac{3y}{2}+\frac{Ce^{-s}}{1-\dot{\tau}}\leq \frac{1}{4}y+C\veps \quad \text{ for }y<0.
		\end{align*}
	\end{subequations}
	This directly implies \eqref{U_far}.
\end{proof}

Next, we present Lemma~\ref{rmk2} giving the decaying properties of solutions to the transport-type equation, under suitable  assumptions. 
\begin{lemma} \label{rmk2}
Let  $f$ be a smooth solution to the equation 
\begin{equation*}
	\partial_s f(y,s) + \mathcal{U}(y,s)\partial_y f(y,s) +D(y,s)f(y,s) =F(y,s),
\end{equation*}  
where
$D$ and $F$ are smooth functions satisfying
\begin{subequations}
	\begin{align} 
		& \inf_{\{|y| \geq N,\, s\in[s_0,\infty)\}}D(y,s)\geq \lambda_D,\label{rmk2_ass1}
		\\
		& \|F (\cdot, s) \|_{L^{\infty}(|y| \geq N)}\leq F_0e^{-s\lambda_F}\label{rmk2_ass2}
	\end{align}
\end{subequations}
for some {$ \lambda_D, \lambda_F, N  \geq  0$}, and $\mathcal{U}$ is defined in \eqref{U-p}.   Then it holds that  
\begin{equation*}
	\begin{array}{l l}
		\limsup_{| y |\rightarrow \infty}|f(y,s)|\leq \limsup_{|y|\rightarrow \infty}{|f(y,s_0)|}e^{-\lambda_D(s-s_0)}+\frac{F_0}{\lambda_D-\lambda_F}e^{-s\lambda_F} & \quad \text{if } \lambda_D>\lambda_F, \\
		\limsup_{| y |\rightarrow \infty}|f( y ,s)|\leq \limsup_{|y|\rightarrow \infty}{|f(y,s_0)|}e^{-\lambda_D(s-s_0)}+\frac{F_0e^{-s_0\lambda_F}}{\lambda_F-\lambda_D}e^{-\lambda_D(s-s_0)} & \quad \text{if } \lambda_F>\lambda_D.
	\end{array}
\end{equation*}
\end{lemma}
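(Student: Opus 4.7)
The plan is to integrate the equation along characteristics emanating from points with large $|y_0|$ and exploit the outgoing property of $\mathcal{U}$ established in \eqref{U_far} of Lemma~\ref{UW_far-p}, so that the characteristic stays in the region $\{|y|\geq N\}$ where the hypotheses on $D$ and $F$ are in force. This way one may apply a standard Duhamel argument with the uniform bounds $\lambda_D$ and $F_0 e^{-\lambda_F s}$, and the result follows from an elementary integral estimate.

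Concretely, I would first define the characteristic $\psi(y_0;\cdot)$ by $\frac{d}{ds}\psi=\mathcal{U}(\psi,s)$, $\psi(y_0;s_0)=y_0$, for $|y_0|\geq\max\{N,2\}$. The bound \eqref{U_far} gives $\mathcal{U}(y,s)\operatorname{sgn}(y)\geq \tfrac{1}{8}$ on $\{|y|\geq 1\}$, so $\frac{d}{ds}|\psi|\geq \tfrac{1}{8}$ as long as $|\psi|\geq 1$. In particular $|\psi(y_0;s)|\geq |y_0|\geq N$ for all $s\geq s_0$, which ensures both $D(\psi(y_0;s),s)\geq \lambda_D$ and $|F(\psi(y_0;s),s)|\leq F_0 e^{-\lambda_F s}$ along the whole trajectory.

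Along $\psi$ the PDE collapses to the scalar ODE $\tfrac{d}{ds}f(\psi,s)=-D(\psi,s)f(\psi,s)+F(\psi,s)$, whose Duhamel representation is
\begin{equation*}
f(\psi(y_0;s),s)=f(y_0,s_0)e^{-\int_{s_0}^{s}D(\psi,s')\,ds'}+\int_{s_0}^{s}e^{-\int_{s'}^{s}D(\psi,s'')\,ds''}F(\psi,s')\,ds'.
\end{equation*}
Applying the two hypotheses yields
\begin{equation*}
|f(\psi(y_0;s),s)|\leq |f(y_0,s_0)|e^{-\lambda_D(s-s_0)}+F_0\int_{s_0}^{s}e^{-\lambda_D(s-s')}e^{-\lambda_F s'}\,ds'.
\end{equation*}
The remaining integral is evaluated in closed form in each case: if $\lambda_D>\lambda_F$, bounding $e^{(\lambda_D-\lambda_F)s'}$ above by $e^{(\lambda_D-\lambda_F)s}$ gives the factor $(\lambda_D-\lambda_F)^{-1}e^{-\lambda_F s}$, while if $\lambda_F>\lambda_D$, pulling out $e^{(\lambda_D-\lambda_F)s_0}$ as the larger endpoint gives the factor $(\lambda_F-\lambda_D)^{-1}e^{-\lambda_F s_0}e^{-\lambda_D(s-s_0)}$.

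To conclude, I would take $|y_0|\to\infty$. Since $|\psi(y_0;s)|\geq |y_0|$, this corresponds to $|\psi(y_0;s)|\to\infty$, so the left-hand side converges to $\limsup_{|y|\to\infty}|f(y,s)|$ and the first term on the right to $\limsup_{|y|\to\infty}|f(y,s_0)|\,e^{-\lambda_D(s-s_0)}$, producing the two stated bounds. The only subtle point is making sure every large $y$ is reached as $\psi(y_0;s)$ for some $y_0$ with $|y_0|$ large—this is automatic because the characteristic flow at fixed $s$ is a homeomorphism of $\mathbb{R}$ and the outward monotonicity $|\psi(y_0;s)|\geq |y_0|$ prevents far field points from being created out of bounded initial data. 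Everything else is a routine Duhamel/integral computation; the genuine work was already done in Lemma~\ref{UW_far-p}.
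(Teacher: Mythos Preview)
Your argument is correct and follows the standard characteristic/Duhamel route; the paper itself does not prove this lemma but defers to \cite{BKK}, and your method is almost certainly what appears there.  One small correction to your closing remark: the outward monotonicity $|\psi(y_0;s)|\geq|y_0|$ does \emph{not} by itself prevent far-field points at time $s$ from arising out of bounded initial data---if anything it says the opposite, that initial points drift outward.  The correct justification (which you also mention) is simply that the flow $y_0\mapsto\psi(y_0;s)$ is a homeomorphism of $\mathbb{R}$ for each fixed $s$, hence maps bounded sets to bounded sets; consequently any sequence $y_n$ with $|y_n|\to\infty$ has preimages $y_{0,n}$ with $|y_{0,n}|\to\infty$, so eventually $|y_{0,n}|\geq\max\{N,2\}$ and your Duhamel bound applies, allowing the limsup to pass through.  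You should also note explicitly that invoking \eqref{U_far} presupposes the bootstrap hypotheses of Proposition~\ref{mainprop}; this is harmless since every application of the lemma in the paper is made under those hypotheses, but the lemma as stated does not list them.
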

We refer to \cite{BKK}, where  the proof of Lemma~$\ref{rmk2}$ is presented in the appendix.

\subsection{Proof of Proposition~\ref{mainprop} }\label{ch3.4}
This subsection is devoted  to prove Proposition~\ref{mainprop}. 
\subsubsection{The third derivative of $U$ at $y=0$.}
In Lemma~\ref{U30lem}, we close the bootstrap assumption \eqref{EP2_1D2-p}. 
\begin{lemma}\label{U30lem}
	 Under the assumptions of Proposition~\ref{mainprop}, we have that for all   $s\in[s_0,\sigma_1]$,
	\begin{equation}
		|\partial_y^3U(0,s)-6|\leq C\veps, \label{str_U3-p}
	\end{equation}
	where $C>0$ is a constant.
\end{lemma}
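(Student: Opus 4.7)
\medskip

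\noindent\textbf{Proof proposal for Lemma~\ref{U30lem}.} The plan is to derive a scalar ODE for the quantity $F(s) := \partial_y^3 U(0,s) - 6$ by evaluating the $\partial_y^3 U$-equation \eqref{EP2_2D3-p} at $y=0$, and then to observe that both the damping coefficient and the forcing are of order $e^{-s}$ thanks to the modulation constraints and the Poisson estimates already established.

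First I would substitute $U_y(0,s)=-1$ and $U_{yy}(0,s)=0$ from \eqref{constraint-p} into \eqref{EP2_2D3-p}, noting that by \eqref{U-p} and \eqref{constraint-p} one has $\mathcal{U}(0,s)=\frac{e^{s/2}(\kappa-\dot{\xi})}{1-\dot{\tau}}$. This yields
\begin{equation*}
\partial_s \partial_y^3 U(0,s) \;-\; \frac{4\dot{\tau}}{1-\dot{\tau}}\,\partial_y^3 U(0,s) \;=\; -\,\mathcal{U}(0,s)\,\partial_y^4 U(0,s) \;-\; \frac{e^s\,\partial_y^4\Phi(0,s)}{1-\dot{\tau}},
\end{equation*}
where I used $4-\frac{4}{1-\dot{\tau}}=-\frac{4\dot{\tau}}{1-\dot{\tau}}$. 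Writing $F(s)=\partial_y^3 U(0,s)-6$, this becomes the linear ODE
\begin{equation*}
F'(s) \;-\; \frac{4\dot{\tau}}{1-\dot{\tau}}F(s) \;=\; \frac{24\dot{\tau}}{1-\dot{\tau}} \;-\; \mathcal{U}(0,s)\,\partial_y^4 U(0,s) \;-\; \frac{e^s\partial_y^4\Phi(0,s)}{1-\dot{\tau}}.
\end{equation*}

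Next I would check the initial datum. The scaling relation $\partial_x^3 u = e^{4s}\partial_y^3 U$ at $t=-\veps$ (where $s=s_0=-\log\veps$) combined with \eqref{init_w_3-p} gives $\partial_y^3 U(0,s_0)=\veps^4\cdot 6\veps^{-4}=6$, so $F(s_0)=0$. Then I would estimate the coefficient and forcing: by \eqref{tau_dec} we have $|\dot{\tau}/(1-\dot{\tau})|\le C e^{-s}$; by \eqref{temp_1-p} in Lemma~\ref{UW_far-p}, $|\mathcal{U}(0,s)|=|e^{s/2}(\kappa-\dot{\xi})/(1-\dot{\tau})|\le C e^{-s}$; by the bootstrap \eqref{EP2_1D4-p}, $|\partial_y^4 U(0,s)|\le M$; and by Lemma~\ref{Phider_lem}, $\|\partial_y^4\Phi\|_{L^\infty}\le C e^{-2s}$, so $e^s|\partial_y^4\Phi(0,s)|\le C e^{-s}$. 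Together these yield, uniformly in $s\in[s_0,\sigma_1]$,
\begin{equation*}
\left|\frac{4\dot{\tau}}{1-\dot{\tau}}\right| \le C e^{-s}, \qquad \text{RHS bounded by } C(M)\, e^{-s}.
\end{equation*}

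Finally I would integrate using an integrating factor. Since $\int_{s_0}^s \frac{4\dot{\tau}}{1-\dot{\tau}}\,ds'$ is absolutely bounded by $C\int_{s_0}^\infty e^{-s'}\,ds'=C\veps$, the integrating factor is bounded above and below by positive constants independent of $s$. Combined with $F(s_0)=0$, variation of parameters gives
\begin{equation*}
|F(s)| \;\le\; C \int_{s_0}^{s} e^{-s'}\,ds' \;\le\; C e^{-s_0} \;=\; C\veps,
\end{equation*}
which is exactly \eqref{str_U3-p}. The computation is routine; the only mildly delicate point is ensuring that $e^s\partial_y^4\Phi(0,s)$ retains an $e^{-s}$ decay after one power of $e^s$ is absorbed, which is why the $e^{-2s}$ bound from Lemma~\ref{Phider_lem} (rather than an $e^{-s}$ one) is essential.
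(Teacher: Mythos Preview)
Your proposal is correct and follows essentially the same route as the paper: evaluate \eqref{EP2_2D3-p} at $y=0$, use the constraints \eqref{constraint-p} and the modulation relation \eqref{Eq_Modul2-p} (equivalently, \eqref{temp_1-p}) together with \eqref{tau_dec}, \eqref{EP2_1D4-p} and \eqref{Pyes} to show that all terms are $O(e^{-s})$, then integrate from $s_0$. The only cosmetic difference is that the paper bounds $|\partial_s\partial_y^3U(0,s)|$ directly (absorbing the damping term via the bootstrap bound \eqref{EP2_1D2-p}) and integrates, whereas you rewrite it as a linear ODE for $F=\partial_y^3U(0,\cdot)-6$ and use an integrating factor; both arguments rest on the same estimates and give the same conclusion.
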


\begin{proof}
	Evaluating \eqref{EP2_2D3-p} at $y=0$ and using  \eqref{constraint-p}, \eqref{Eq_Modul2-p},  we obtain
	\begin{equation}\label{Eq_3rd}
		\partial_s \partial_y^3U^0 = \frac{4\dot{\tau}}{1-\dot{\tau}}\partial_y^3U^0+ \frac{\partial_y^4 U^0}{1-\dot{\tau}}\frac{e^s\partial_y^3 \Phi^0}{\partial_y^3 U^0}  -\frac{ e^s \partial_y^4\Phi^0 }{1-\dot{\tau}},
	\end{equation} 
	where $\partial_y^k f^0:=\partial_y^kf(0,s)$ for $f=U,\Phi$. 
	By \eqref{EP2_1D2-p}, \eqref{tau_dec} and \eqref{d-t-e-p}, the first term of the right-hand side of \eqref{Eq_3rd} is bounded as follows : 
	\begin{equation*}
		\left|\frac{4\dot{\tau}}{1-\dot{\tau}}\partial_y^3U^0\right|\leq 8|\dot{\tau}||\partial_y^3U^0|\leq Ce^{-s}.
	\end{equation*}
	For the remaining terms, using \eqref{EP2_1D4-p}, \eqref{d-t-e-p}, \eqref{Pyes} and \eqref{temp_1-p}, we obtain
	\begin{equation*}
		\left|\frac{\partial_y^4 U^0}{1-\dot{\tau}}\frac{e^s\partial_y^3 \Phi^0}{\partial_y^3 U^0} -\frac{ e^s \partial_y^4\Phi^0 }{1-\dot{\tau}}\right|\leq Ce^{-s}.
	\end{equation*} 
	Combining the above estimates, we obtain
	\begin{equation*} 
		\left|\partial_y^3U^0(s)-\partial_y^3U^0(s_0)\right|\leq 	\int_{s_0}^{s}{\left|\partial_{s'}\partial_y^3U^0(s')\right|\,ds'}\leq C\int_{-\log\varepsilon}^s {e^{-s'}\,ds'}\leq C\veps.
	\end{equation*}
	By \eqref{init_w_3-p}, we finish the proof.
\end{proof}

\subsubsection{The first derivative of $U$.}
In what follows,  $U_y$ near $y=0$ is estimated to close  \eqref{EP2_1D1-p}. 
Let $\wt{U} := U- \overline{U}$, where $\overline{U}$ is the blow-up profile of the Burgers equation solving \eqref{Burgers_SS}.  Then, from \eqref{Burgers_SS} and \eqref{EP2_2D1-p}, we see that 
\begin{equation}\label{Eq_diff-p}
	\partial_s \wt{U}_y + \left(1 + \frac{\wt{U}_y + 2\overline{U}'}{1-\dot{\tau}}\right) \wt{U}_y + \mathcal{U} \wt{U}_{yy}  = - \frac{e^s\Phi_{yy}}{1-\dot{\tau}} - \left( \frac{\wt {U} + \dot{\tau}\overline{U}}{1-\dot{\tau}}  + \frac{e^{s/2}(\kappa - \dot{\xi})}{1-\dot{\tau}} \right) \overline{U}'' - \frac{\dot{\tau}}{1-\dot{\tau}}(\overline{U}')^2.
\end{equation}
Note that the damping term, $1 + ( \wt{U}_y + 2\overline{U}')/(1-\dot{\tau})$, in \eqref{Eq_diff-p} is   strictly negative around $y=0$. 
To overcome this issue, we obtain a local behavior of  $\wt {U}_y$ around $y=0$ by employing a weight function, i.e., we set 
\[
V(y,s):=\frac{1+y^2}{y^2}\wt{U}_y(y,s).
\]
%
Then $V$ satisfies 
\begin{equation*}
	\partial_s V+D^V(y,s)V+\mathcal{U}(y,s)V_y=F^V(y,s)+\int_{\mathbb{R}}V(y',s)K^V(y,y',s)\,dy',
\end{equation*}
where $\mathcal{U}$ is defined in \eqref{U-p}, and
\begin{subequations}
	\begin{align*}
		D^V(y,s) &:= 1+\frac{\wt{U}_y+2\overline{U}'}{1-\dot{\tau}}+\frac{2}{y(1+y^2)}\mathcal{U},
		\\
		F^V(y,s) &:= - \frac{1+y^2}{y^2}\frac{ e^s \Phi_{yy}}{1-\dot{\tau}} - \frac{1+y^2}{y^2}\left( \frac{ \dot{\tau}\overline{U}}{1-\dot{\tau}}  + \frac{e^{s/2}(\kappa - \dot{\xi})}{1-\dot{\tau}} \right) \overline{U}'' - \frac{1+y^2}{y^2}\frac{\dot{\tau}}{1-\dot{\tau}}(\overline{U}')^2,
		\\
		K^V(y,s;y') &:= -\mathbb{I}_{[0,y]}(y')\frac{\overline{U}''(y)}{1-\dot{\tau}}\frac{1+y^2}{y^2}\frac{y'^2}{1+y'^2}.
	\end{align*}
\end{subequations}
%
\begin{lemma}\label{Wy1_lem_p}
Under the assumptions of Proposition~\ref{mainprop}, we have 
	\begin{equation}
		|U_y(y,s)-\overline{U}'(y)|\leq \frac{y^2}{20(1+y^2)} \label{Burgers_B.1-p}
	\end{equation}
		for all $y\in\mathbb{R}$ and $s\in[s_0,\sigma_1]$.
\end{lemma}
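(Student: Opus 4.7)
The statement is equivalent to $|V(y,s)| \le \tfrac{1}{20}$ for $(y,s) \in \mathbb{R}\times [s_0,\sigma_1]$, since $V = \tfrac{1+y^2}{y^2} \wt{U}_y$. The plan is to run a maximum principle along the characteristics $\dot\psi = \mathcal{U}(\psi,s)$ on the transport equation
\[
\partial_s V + D^V V + \mathcal{U} V_y = F^V + \int_{\mathbb{R}} V(y',s)\, K^V(y,s;y')\,dy'
\]
derived just before the lemma. Because the damping $D^V$ degenerates at $y=0$, I would split $\mathbb{R}$ into a small fixed neighborhood $\{|y|\le \delta_0\}$ and its complement, handling the first region by Taylor expansion from the constraints (already closed) and the second by the characteristic argument.

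For the near-origin region, the constraint \eqref{constraint-p} together with $\overline{U}(0)=0$, $\overline{U}'(0)=-1$, $\overline{U}''(0)=0$ forces $\wt U(0,s) = \wt U_y(0,s) = \wt U_{yy}(0,s) = 0$, so Taylor expansion gives
\[
V(0,s) = \tfrac{1}{2}\partial_y^3 \wt U(0,s) = \tfrac{1}{2}(\partial_y^3 U(0,s) - 6),
\]
which is $O(\veps)$ by Lemma~\ref{U30lem}. Combined with the uniform bound $\|\partial_y^4 U\|_{L^\infty}\le M$ from \eqref{EP2_1D4-p}, a further Taylor expansion of $\wt U_y$ yields $|V(y,s)| \le C\veps + CM |y|$ on $|y| \le \delta_0$, which is well below $1/40$ once $\delta_0$ and $\veps$ are small. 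This uses Lemma~\ref{U30lem} as a prerequisite, which is precisely why that lemma appears before the present one.

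On $|y|\ge \delta_0$, I would verify the three ingredients needed for the characteristic estimate. First, a direct computation using \eqref{Uy1-p}, \eqref{d-t-e-p}, the algebraic inequalities for $\overline{U}$ collected in Section~\ref{inequalities}, and Lemma~\ref{UW_far-p} (which gives outflow $\mathcal{U}\cdot\mathrm{sgn}(y)\ge |y|/8$ for $|y|>1$) shows $D^V(y,s) \ge \lambda_0=\lambda_0(\delta_0)>0$ uniformly. Second, each summand in the forcing $F^V$ carries an $e^{-s}$ prefactor from either $\dot\tau=O(e^{-s})$ (by \eqref{tau_dec}), $e^s\Phi_{yy}=O(e^{-s})$ (by Lemma~\ref{Phider_lem}), or $e^{s/2}(\kappa-\dot\xi)=O(e^{-s})$ (by \eqref{temp_1-p}); the weight $(1+y^2)/y^2$ is bounded on $|y|\ge\delta_0$, so $\|F^V(\cdot,s)\|_{L^\infty(|y|\ge\delta_0)}\le Ce^{-s}$. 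Third, the nonlocal kernel contribution is dominated by $\|V\|_{L^\infty}\cdot|\overline{U}''(y)|$ times a bounded weight, and the bootstrap $\|V\|_{L^\infty}\le\tfrac{1}{10}$ keeps it a small fraction of $\lambda_0$. Integrating the transport equation along $\psi$ and using that \eqref{4.3a-p} yields $\|V(\cdot,s_0)\|_{L^\infty}\le \tfrac{1}{40}$, one obtains
\[
|V(y,s)| \le \tfrac{1}{40}\, e^{-\lambda_0(s-s_0)/2} + C\veps \le \tfrac{1}{20}
\]
on $|y|\ge \delta_0$ once $\veps$ is small enough, which together with Step~1 closes the bound on all of $\mathbb{R}$.

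The main obstacle is the degeneracy of $D^V$ at $y=0$: a direct computation using $\overline{U}'(0)=-1$ shows $D^V(0,s)$ is only $O(\veps)$, so the characteristic/max-principle argument cannot by itself propagate a bound into a neighborhood of the origin. The resolution is precisely the split described above, which leverages the higher-order vanishing of $\wt U$ at $y=0$ prescribed by \eqref{constraint-p} and the already-closed estimate of Lemma~\ref{U30lem} to bypass this degeneracy and match onto the outer characteristic estimate at $|y|=\delta_0$.
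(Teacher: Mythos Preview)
Your overall architecture matches the paper: split off a small neighborhood $\{|y|\le l\}$ and control $V$ there by Taylor expansion using the constraints \eqref{constraint-p}, Lemma~\ref{U30lem}, and the bootstrap on $\partial_y^4 U$; then run a maximum-principle/characteristic argument on $\{|y|\ge l\}$. The near-origin step and the bound on $F^V$ are essentially as in the paper.

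The gap is in your handling of the nonlocal kernel. You write that the contribution $\int V(y',s)K^V(y,s;y')\,dy'$ is bounded by $\|V\|_{L^\infty}\cdot|\overline{U}''(y)|$ times a bounded weight, and that the bootstrap $\|V\|_{L^\infty}\le\tfrac{1}{10}$ ``keeps it a small fraction of $\lambda_0$.'' This fails quantitatively: since $\delta_0$ must be chosen of order $1/M$ for the Taylor step, the uniform damping floor $\lambda_0$ is of order $\delta_0^2\sim M^{-2}$, whereas $\int|K^V|\,dy'$ is $O(1)$ at moderate $|y|$ (for instance near $|y|=1$). Treating the kernel as forcing therefore produces a contribution $\tfrac{1}{10}\cdot O(1)/\lambda_0$, which blows up as $M$ grows and cannot be made $\le\tfrac{1}{40}$.

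The remedy, and what the paper does, is to compare the kernel to the damping \emph{pointwise in $y$} rather than to the uniform minimum $\lambda_0$: the algebraic inequality \eqref{0605_m_4} gives
\[
\int_{\mathbb{R}}|K^V(y,s;y')|\,dy' \;\le\; \frac{1}{\lambda}\,D^V(y,s)\qquad\text{for some }\lambda>1,
\]
so the kernel is absorbed into the damping (effective damping $\ge(1-\tfrac{1}{\lambda})D^V>0$), and one then invokes the packaged maximum principle Lemma~\ref{max_2} (with $\delta=1/\lambda$) together with the initial bound from \eqref{4.3a-p} and the far-field condition $\limsup_{|y|\to\infty}|V|=0$ coming from \eqref{Utildey_M-p}. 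With this correction your outline becomes the paper's proof.
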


\begin{proof}
	We shall show that $\|V(\cdot,s)\|_{L^{\infty}(\mathbb{R})}\leq 1/20$. We first estimate $V$ near $y=0$. By expanding $\wt{U}_y$, we have
\begin{equation*}
	\wt{U}_y(y,s)=\frac{y^2}{2}(\partial_y^3U(0,s)-6)+\frac{y^3}{6}(\partial_y^4U(y',s)-\overline{U}^{(4)}(y')) \quad \text{for some }y'\in(-y,y)
\end{equation*}
	since $\wt{U}_y(0) =\wt{U}_{yy}(0) =0$ and $\overline{U}^{(3)}(0)=6$. Multiplying the above equation by $(1+y^2)/y^2$ and applying \eqref{EP2_1D4-p}, we get
	\begin{equation*}
		|V(y,s)|\leq \frac{1+y^2}{2}|\partial_y^3U(0,s)-6|+\frac{1+y^2}{6}|y|(M+\|\overline{U}^{(4)}\|_{L^{\infty}}).
	\end{equation*}
	Now, we choose sufficiently small $l=l(M)>0$ such that
	\begin{equation}\label{V-l-p}
		|V|\leq \frac{1+l^2}{2}|\partial_y^3U(0,s)-6|+\frac{1+l^2}{80} \leq \frac{1}{40}, \qquad  |y|\leq l
	\end{equation}
	for sufficiently small $\ve>0$. Here we used Lemma~\ref{U30lem}. 
	In what follows, we consider the region $|y|\geq l$.
%
%
%
%
%
%
We note that
\begin{equation*}
	\begin{split}
		D^V&=1+\frac{\wt{U}_y+2\overline{U}'}{1-\dot{\tau}}+\frac{2}{y(1+y^2)}\left(\frac{\overline{U}+\wt{U}}{1-\dot{\tau}}+\frac{3y}{2}+\frac{e^{s/2}(\kappa-\dot{\xi})}{1-\dot{\tau}}\right)
		\\
		&\geq \left(1+2\overline{U}'+\frac{2}{y(1+y^2)}\left(\frac{3y}{2}+\overline{U}\right)\right)-\left|\frac{\wt{U}_y+2\dot{\tau}\overline{U}'}{1-\dot{\tau}}+\frac{2}{y(1+y^2)}\left(\frac{\wt{U}+\dot{\tau}\overline{U}}{1-\dot{\tau}}+\frac{e^{s/2}(\kappa-\dot{\xi})}{1-\dot{\tau}}\right)\right|.
	\end{split}
\end{equation*}
We note that a part of $D^V$ has a non-negative lower bound as  
\begin{equation*}
	1+2\overline{U}'+\frac{2}{y(1+y^2)}\left(\frac{3y}{2}+\overline U\right)\geq \frac{y^2}{5(1+y^2)} + \frac{16y^2}{5(1+8y^2)}, \quad y \in \mathbb{R}, 
\end{equation*}
which is given in \eqref{0605_m_3} of Lemma~\ref{4.3}.

On the other hand, using  \eqref{EP2_1D1-p}, \eqref{tau_dec}, \eqref{d-t-e-p} and \eqref{y-w-y}, 
we have 
\begin{equation}\label{DV_1}
	\left|\frac{\wt{U}_y+2\dot{\tau}\overline{U}'}{1-\dot{\tau}}\right|\leq (1+O(\veps))\left(\frac{y^2}{10(1+y^2)}+C\veps\right)=\frac{y^2}{10(1+y^2)}+O(\veps).
\end{equation}
By  \eqref{EP2_1D1-p}, \eqref{tau_dec} and $\wt{U}(0,s)=0$, we have 
\begin{equation}\label{DV_2}
	\begin{split}
		\left|\frac{2}{y(1+y^2)}\frac{\wt{U}+\dot{\tau}\overline{U}}{1-\dot{\tau}}\right|&\leq \frac{2}{|y|(1+y^2)}\left(\frac{1}{10}\int^{|y|}_0\frac{y'^2}{(1+y'^2)}\,dy' +O(\veps)\right)
		\leq \frac{y^2}{15(1+y^2)}+O(\veps).
	\end{split}
\end{equation}
By \eqref{tau_dec}, \eqref{temp_1-p} and the fundamental theorem of calculus with $U(0,s)=0$,
\begin{equation}\label{DV_3}
	\begin{split}
		\left|\frac{2}{y(1+y^2)}\frac{e^{s/2}(\kappa-\dot{\xi})}{1-\dot{\tau}}\right|
		&\leq \frac{4}{|y|} |e^{s/2}(\kappa-\dot{\xi})| \leq  O(\veps), \qquad |y|\geq l.
	\end{split}
\end{equation}
Combining all, we have 
\begin{equation}\label{damp-p}
	\begin{split}
		D^V&\geq \left( \frac{y^2}{5(1+y^2)}+\frac{16y^2}{5(1+8y^2)} \right)-\frac{y^2}{10(1+y^2)}-\frac{y^2}{15(1+y^2)}-O(\veps)
		\\
		&\geq \frac{3y^2}{1+8y^2}+\frac{y^2}{30(1+y^2)}
		\ge
		\frac{3l^2}{1+8l^2}+\frac{l^2}{30(1+l^2)}, \qquad |y|\geq l.
	\end{split}
\end{equation}
 Here we used the fact that  the functions $y^2(1+8y^2)^{-1}$ and $y^2 ( 1+y^2)^{-1}$ are increasing functions on $y>0$.
 %

\emph{Estimate of $K^V$} : 
Using \eqref{d-t-e-p} and the property of $\overline{U}$ presented in \eqref{0605_m_4}, it is straightforward to check that 
\begin{equation}\label{kernel-p}
	\int_\mathbb{R}{|K^V(y,y',s)|\,dy'}\leq \frac{|\overline{U}''(y)|}{1-\dot{\tau}}\frac{1+y^2}{y^2}\int^{|y|}_0{\frac{(y')^2}{1+(y')^2 }\,dy'}\leq \frac{1}{\lambda}\left(\frac{3y^2}{1+8y^2}+\frac{y^2}{30(1+y^2)}\right)
\end{equation}
for some $\lambda>1$.
Thus, by \eqref{damp-p} and \eqref{kernel-p}, we deduce that 
\begin{equation}\label{KP-p}
	\int_\mathbb{R}{|K^V(y,y',s)|\,dy'}\leq \frac{1}{\lambda}D^V(y,s), \qquad  |y|\geq l.
\end{equation} 

\emph{Estimate of $F^V$} : 
Using \eqref{d-t-e-p}, we obtain 
\begin{equation}\label{FP-p}
	\begin{split}
		|F^V(\cdot,s)|
		&\leq C\frac{1+l^2}{l^2}\left(|e^s\Phi_{yy}|+|\dot{\tau}||\overline{U}\overline{U}''|+e^{s/2}|\kappa-\dot{\xi}||\overline{U}''|+|\dot{\tau}||\overline{U}'| \right)
		\\
		&\leq C\frac{1+l^2}{l^2}\left(e^{-s}(|y|+1)|\overline{U}''|+O(\veps)\right)
		\\
		&\leq \frac{1+l^2}{l^2} \cdot O(\veps), \qquad |y|\geq l.
	\end{split}
\end{equation}
Here, we have used   \eqref{tau_dec}, \eqref{Pyes}, \eqref{temp_1-p} and the properties of $\overline{U}$, i.e., \eqref{y-w-y},  \eqref{Wyybar_bdd} and the fact that
\begin{equation*}
	|\overline{U}(y)|\leq \int^{|y|}_0|\overline{U}'(y')|\,dy' \leq |y|,
\end{equation*}
thanks to \eqref{y-w-y2}. By choosing sufficiently small $\veps>0$, one can make $\|F^V(\cdot,s)\|_{L^{\infty}(|y|\geq l)}$ relatively small compared to the lower bound of $D^V$ in \eqref{damp-p}.


On the other hand, \eqref{4.3a-p} and \eqref{Utildey_M-p} imply 
\begin{equation}\label{Wy_claim}
	|V(y,s_0)|\leq \frac{1}{40}, \qquad \limsup_{|y|\rightarrow \infty}|V(y,s)|=0,
\end{equation} 
respectively.  
By applying Lemma~\ref{max_2} with \eqref{V-l-p}, \eqref{damp-p} and  \eqref{KP-p}--\eqref{Wy_claim}, we have  
\begin{equation*}
	\|V(\cdot,s)\|_{L^{\infty}(\mathbb{R})}\leq \frac{1}{20}.
\end{equation*}
This implies \eqref{Burgers_B.1-p}.
\end{proof}

\subsubsection{The second derivative of $U$.}
\begin{lemma}\label{Wy2_lem}
Under the assumptions of Proposition~\ref{mainprop},   we have
	\begin{equation*}
		|U_{yy}|\leq \frac{14|y|}{(1+y^2)^{1/2}}
	\end{equation*}
	for all $y\in\mathbb{R}$ and $s\in[-\log\veps,\sigma_1]$.
\end{lemma}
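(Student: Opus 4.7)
Following the template of Lemma~\ref{Wy1_lem_p}, the plan is to work with the weighted variable
\[
V_2(y,s) := \frac{(1+y^2)^{1/2}}{y}\,U_{yy}(y,s),
\]
and to show $\|V_2(\cdot,s)\|_{L^\infty(\mathbb{R})} \le 14$ on $[s_0,\sigma_1]$. Substituting $U_{yy} = \frac{y}{(1+y^2)^{1/2}}V_2$ and $\partial_y^3 U = \frac{V_2}{(1+y^2)^{3/2}} + \frac{y}{(1+y^2)^{1/2}}\partial_y V_2$ into \eqref{EP2_2D2-p} produces the transport equation
\[
\partial_s V_2 + D^{V_2}(y,s)\,V_2 + \mathcal{U}(y,s)\,\partial_y V_2 = F^{V_2}(y,s),
\]
where
\[
D^{V_2} = \frac{5}{2} + \frac{3U_y}{1-\dot\tau} + \frac{\mathcal{U}}{y(1+y^2)}, \qquad F^{V_2} = -\frac{(1+y^2)^{1/2}}{y}\,\frac{e^s\partial_y^3\Phi}{1-\dot\tau}.
\]
A convenient feature, in contrast with Lemma~\ref{Wy1_lem_p}, is the absence of any nonlocal integral kernel term.

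I would split the analysis at a scale $l = l(M)$. Near the origin, a Taylor expansion using $U_{yy}(0,s)=0$ from \eqref{constraint-p}, Lemma~\ref{U30lem}, and $\|\partial_y^4 U\|_{L^\infty}\le M$ from \eqref{EP2_1D4-p} gives
\[
|V_2(y,s)| \le (1+y^2)^{1/2}\left(|\partial_y^3U(0,s)| + \frac{M|y|}{2}\right) \le 7 \qquad (|y|\le l)
\]
for sufficiently small $\veps$. On $|y|\ge l$, the leading piece of $D^{V_2}$ is the profile
\[
D_0(y) := \frac{5}{2} + 3\overline{U}'(y) + \frac{\overline{U}(y)}{y(1+y^2)} + \frac{3}{2(1+y^2)},
\]
which vanishes at $y=0$ (like $\frac{19}{2}y^2$) and equals $\frac{5}{2}$ at infinity; a uniform lower bound $D_0(y)\ge c_1(l)>0$ on $|y|\ge l$ should be available from the explicit relation \eqref{Burgers_SS'} in the spirit of Lemma~\ref{4.3}. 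The error $D^{V_2}-D_0$ is $O(\veps)$ by \eqref{Burgers_B.1-p}, \eqref{tau_dec} and \eqref{temp_1-p}, and Lemma~\ref{Phider_lem} yields $|F^{V_2}|\le C(l)e^{-s}= O(\veps)$ on $|y|\ge l$. For the initial datum, the same Taylor analysis combined with \eqref{init_w_3-p}--\eqref{init_24-p} gives $|V_2(y,s_0)|\le 10$ uniformly; for the behavior at infinity, Lemma~\ref{rmk2} applied to the $V_2$-equation yields $\limsup_{|y|\to\infty}|V_2(y,s)|=0$. A maximum principle argument (Lemma~\ref{max_2}) then closes $|V_2(\cdot,s)|\le 14$ on $|y|\ge l$ and hence globally.

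The main obstacle is the pointwise nonnegativity and the quantitative lower bound of $D_0(y)$, which is a purely algebraic fact about $\overline{U}$ to be extracted from \eqref{Burgers_SS'}. A secondary subtlety is the book-keeping on the transition region where $D_0$ degenerates (of order $l^2$): the $O(\veps)$ perturbations must remain strictly dominated by $c_1(l)$, which dictates the smallness of $\veps$ relative to $l(M)$.
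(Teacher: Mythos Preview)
Your proposal is correct and follows essentially the same route as the paper: the same weighted variable $(1+y^2)^{1/2}y^{-1}U_{yy}$, the same Taylor argument near the origin (the paper takes $l=1/M$), the same lower bound on the damping via the decomposition $D_0 = 2(1+\overline{U}') + \tfrac12\bigl(1+2\overline{U}'+\tfrac{2}{1+y^2}(\tfrac32+\overline{U}/y)\bigr)$ handled by \eqref{y-w-y2} and \eqref{0605_m_3}, and the same appeal to Lemma~\ref{rmk2} and Lemma~\ref{max_2}. One small correction: Lemma~\ref{rmk2} does not give $\limsup_{|y|\to\infty}|V_2(y,s)|=0$ but only a bound of size $O(C(M)\veps)$ coming from the forcing (the paper records $\le 7+C(M)\veps$); this is harmless since all you need is $<14$.
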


\begin{proof}
	
	We first estimate $U_{yy}$ near $y=0$. Expanding $U_{yy}$ at $y=0$, and then using \eqref{EP2_1D4-p} and \eqref{str_U3-p}, we get
	\begin{equation*}
		\begin{split}
			|U_{yy}(y,s)|&=|y||\partial_y^3U(0,s)|+\frac{y^2}{2}|\partial_y^4U(y',s)|  \qquad \text{for some }|y'|\in (0,|y|)
			\\
			&\leq |y|\left(6+C\veps+\frac{M}{2}|y|\right).
		\end{split}
	\end{equation*}
	Now we choose sufficiently large $M>0$ and sufficiently small $\veps_0>0$ such that for all $\veps\in (0,\veps_0)$, it holds that
	\begin{equation}\label{Uyy_local}
		|U_{yy}(y,s)|\leq \frac{7|y|}{(1+y^2)^{1/2}} \qquad \text{as long as }|y|\leq \frac{1}{M}.
	\end{equation}

	In what follows, we obtain some estimates on the region $|y|\geq 1/M$ and apply Lemma~\ref{max_2}. Define $\wt{V}(y,s):= (1+y^2)^{1/2}{y}^{-1}U_{yy}(y,s)$. From \eqref{EP2_2D2-p}, $\wt{V}(y,s)$ satisfies
	\begin{equation*}
			\partial_s\wt{V} + D^{\wt{V}} \wt{V} + \mathcal{U} \wt{V}_y=F^{\wt{V}},
	\end{equation*}
	where $\mathcal{U}$ is given in \eqref{U-p}, and
	\begin{subequations}
		\begin{align*}
			D^{\wt{V}} &:= \frac{5}{2}+\frac{3U_y}{1-\dot{\tau}}+\frac{1}{y(1+y^2)}\left(\frac{U}{1-\dot{\tau}}+\frac{3}{2}y+\frac{e^{s/2}(\kappa-\dot{\xi})}{1-\dot{\tau}}\right),
			\qquad 
			F^{\wt{V}} := -\frac{e^s\partial_y^3\Phi}{1-\dot{\tau}}\frac{(1+y^2)^{1/2}}{y}.
		\end{align*}
	\end{subequations}

\emph{Lower bound of $D^{\wt{V}}$} : 
	Rearranging the terms in $D^{\wt{V}}$, we see that
	\begin{equation*}
		\begin{split}
			D^{\wt{V}}
			&=\left(\frac{5}{2}+3\overline{U}'+\frac{1}{1+y^2}\left(\frac{3}{2}+\frac{\overline{U}}{y}\right)\right)
			\\
			&\quad +\frac{3}{1-\dot{\tau}}((U_y-\overline{U}')+\dot{\tau}\overline{U}')+\frac{1}{y(1+y^2)}\frac{U-\overline{U}}{1-\dot{\tau}}+\frac{1}{y(1+y^2)}\left(\frac{\dot{\tau}\overline{U}}{1-\dot{\tau}}+\frac{e^{s/2}(\kappa-\dot{\xi})}{1-\dot{\tau}}\right).
		\end{split}
	\end{equation*}
	Note that by \eqref{y-w-y2} and \eqref{0605_m_3}, we obtain for $y\in\mathbb{R}$, 
		\begin{equation*}
		\begin{split}
			\frac{5}{2}+3\overline{U}' +\frac{1}{1+y^2}\left(\frac{3}{2}+\frac{\overline{U}}{y}\right)
			&= 2(1+\overline{U}')+\frac{1}{2}\left(1+2\overline{U}'+\frac{2}{1+y^2}\left(\frac{3}{2}+\frac{\overline{U}}{y}\right)\right) 
			\\
			&\geq \frac{2y^2}{3(1+y^2)}+\frac{1}{2}\left(\frac{y^2}{5(1+y^2)}+\frac{16y^2}{5(1+8y^2)}\right) 
			\geq \frac{2y^2}{3(1+y^2)}.
		\end{split}
	\end{equation*}
	Here, we used the fact that 
	\begin{equation*}
		1+\overline{U}'\geq 1-\frac{1}{1+\frac{3y^2}{(3y^2+1)^{2/3}}}
		\geq \frac{y^2}{3(1+y^2)},
	\end{equation*} 
	which is true by \eqref{y-w-y2}.  One can show that the remaining terms are sufficiently small, which implies that   $D^{\wt{V}}$ has a positive lower bound. As in the proof of Lemma~\ref{Wy1_lem_p} (see \eqref{DV_1}--\eqref{DV_3}), we find that for sufficiently small   $\ve>0$, 
	
	\begin{equation}\label{Uyy_D}
		\begin{split}
			D^{\wt{V}}	&\geq \frac{2y^2}{3(1+y^2)}-\frac{3y^2}{10(1+y^2)}-\frac{y^2}{30(1+y^2)}-O(\veps)
			\geq \frac{y^2}{4(1+y^2)} \geq \frac{1}{4(M^2+1)}=:\lambda_D
		\end{split}
	\end{equation}
	uniformly in $|y|\geq 1/M$. 

	\emph{Estimate of $F^{\wt{V}}$}:
	Thanks to \eqref{d-t-e-p} and \eqref{Pyes}, we have for sufficiently small $\veps$ that 
	\begin{equation}\label{Uyy_F2}
		\|F^{\wt{V}}\|_{L^{\infty}(|y|\geq \frac{1}{M})} \leq CM\left(1+\frac{1}{M^2}\right)^{1/2}e^{-s}
		\leq CM\left(1+\frac{1}{M^2}\right)^{1/2}\veps
	\end{equation}
for some constant $C>0$.

	\emph{Asymptotic behavior of $\wt{V}$ at $|y|=\infty$} : 
	Finally, we claim that 
	\begin{equation}\label{wtV_dec}
		\limsup_{|y|\rightarrow \infty}|\wt{V}|<14.
	\end{equation} Thanks to \eqref{Uyy_D} and \eqref{Uyy_F2}, one has
	\begin{equation*}
		\inf_{|y|\geq 1/M}D^{\wt{V}}(y,s)\geq \lambda_D(M), \qquad \|F^{\wt{V}}(y,s)\|_{L^{\infty}(|y|\geq 1/M)}\leq 	F_0(M)e^{-\lambda_Fs},
	\end{equation*}
	where $1=\lambda_F>\lambda_D>0$ for sufficiently large $M$. Then, we apply Lemma~\ref{rmk2} to get
	\begin{equation*}
		\limsup_{|y|\rightarrow \infty}|\wt{V}(y,s)|\leq \limsup_{|y|\rightarrow \infty}|\wt{V}(y,s_0)|+C(M)\veps\le 7 + C(M)\ve, 
	\end{equation*}
	where we have used \eqref{init_24-p} for the last inequality. 
	This implies  \eqref{wtV_dec} for sufficiently small $\ve>0$.
%
	Then we apply Theorem~\ref{max_2} with \eqref{init_24-p}, \eqref{Uyy_local} and \eqref{Uyy_D}-\eqref{wtV_dec} to finish the proof of Lemma~\ref{Wy2_lem}.
\end{proof}

\subsubsection{The third derivative of $U$.}
\begin{lemma}\label{w-3-est}
Under the assumptions of Proposition~\ref{mainprop}, we have
	\begin{equation*}
		\|\partial_y^3U(\cdot,s)\|_{L^{\infty}}\leq \frac{M^{5/6}}{2} 
	\end{equation*}
	for $s\in[s_0,\sigma_1]$.
\end{lemma}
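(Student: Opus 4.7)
The plan is to adapt the strategy of Lemma~\ref{Wy2_lem} to $\partial_y^3 U$: split the analysis into a local region $|y|\leq l$ (with $l=l(M)$ to be chosen) and the far region $|y|\geq l$. Unlike in Lemma~\ref{Wy2_lem}, no weight is needed, since the target bound \eqref{5} is a plain $L^\infty$ estimate.

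First I would handle the local region by a Taylor expansion. Thanks to the constraints \eqref{constraint-p}, one may write
\[
\partial_y^3 U(y,s) = \partial_y^3 U(0,s) + \int_0^y \partial_y^4 U(y',s)\,dy'.
\]
Using \eqref{EP2_1D2-p} (so $|\partial_y^3 U(0,s)|\leq 7$) and \eqref{EP2_1D4-p} (so $|\partial_y^4 U|\leq M$), this gives $|\partial_y^3 U(y,s)|\leq 7 + Ml$ on $|y|\leq l$. Setting $l:=M^{-1/6}/8$ yields $|\partial_y^3 U(y,s)|\leq 7 + M^{5/6}/8\leq M^{5/6}/4$ for $M$ sufficiently large.

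Next, on the far region $|y|\geq l$, I would apply a maximum principle to the transport equation \eqref{EP2_2D3-p},
\[
\partial_s(\partial_y^3 U) + D\,\partial_y^3 U + \mathcal{U}\, \partial_y^4 U = F,
\]
where $D:=4+4U_y/(1-\dot{\tau})$ and $F:=-3U_{yy}^2/(1-\dot{\tau})-e^s\partial_y^4\Phi/(1-\dot{\tau})$. Decomposing $1+U_y=(1+\overline{U}')+(U_y-\overline{U}')$ and invoking $1+\overline{U}'\geq y^2/(3(1+y^2))$ (as in the proof of Lemma~\ref{Wy2_lem}), the bootstrap \eqref{EP2_1D1-p}, and $|\dot{\tau}|=O(e^{-s})$, I would verify
\[
D(y,s) \geq \frac{14\,y^2}{15(1+y^2)} - O(\varepsilon) \geq \lambda_D := \frac{l^2}{3}, \qquad |y|\geq l,
\]
for $\varepsilon$ small. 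For the forcing, \eqref{EP2_1D3-p} gives $|U_{yy}|\leq 15$ and Lemma~\ref{Phider_lem} gives $\|e^s\partial_y^4\Phi\|_{L^\infty}\lesssim e^{-s}$, so $\|F\|_{L^\infty}\leq F_0$ for a constant $F_0$ independent of $M$. The initial value satisfies $|\partial_y^3 U(\cdot,s_0)|=\varepsilon^4|\partial_x^3 u_0|\leq 7$ by \eqref{init_24-p} and the scaling \eqref{WZPhi-p}, and the decay at $|y|=\infty$ follows from Lemma~\ref{rmk2} (since $D(y,s)\to 4$ there). Lemma~\ref{max_2} will then yield
\[
\|\partial_y^3 U(\cdot,s)\|_{L^\infty(|y|\geq l)} \leq \max\bigl\{M^{5/6}/4,\; 7,\; F_0/\lambda_D,\; \text{limsup at }\infty\bigr\}.
\]

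Since $F_0/\lambda_D\sim F_0 M^{1/3}\ll M^{5/6}/2$ for $M$ large, this gives the desired bound $\|\partial_y^3 U\|_{L^\infty}\leq M^{5/6}/2$. The hard part will be the tight balancing in choosing $l$: it has to be small enough so that the Taylor bound $7+Ml$ stays below $M^{5/6}/4$ (forcing $l\lesssim M^{-1/6}$), yet large enough so that $F_0/\lambda_D\lesssim M^{5/6}$ (forcing $l\gtrsim M^{-5/12}$). The exponent $5/6$ in the bootstrap assumption \eqref{EP2_1D5-p} is essentially dictated by this interpolation between the two regions.
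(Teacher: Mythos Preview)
Your proposal is correct and matches the paper's proof almost exactly: the same local/far splitting at $l=M^{-1/6}/8$, the same Taylor bound near $y=0$, the same damping decomposition $4(1+\overline U')+4(U_y-\overline U')+O(\dot\tau)$ on $|y|\ge l$, the same forcing estimate via \eqref{EP2_1D3-p} and \eqref{Pyes}, Lemma~\ref{rmk2} for the limsup at infinity, and Lemma~\ref{max_2} to conclude. The only cosmetic difference is that the paper records $D\ge y^2/(2(1+y^2))$ and applies Lemma~\ref{rmk2} on $|y|\ge1$ with $D\ge1/2$ rather than citing $D\to4$ at infinity, but your version is equally valid.
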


\begin{proof}
	Let us first consider the region near $y=0$. Expanding $\partial_y^3U$ at $y=0$, and then using \eqref{EP2_1D2-p} and \eqref{EP2_1D4-p}, we have
	\begin{equation} \label{Uy3loc}
		|\partial_y^3U(y,s)|\leq 	|\partial_y^3U(0,s)|+|y|\|\partial_y^4U(\cdot,s)\|_{L^{\infty}}\leq \frac{M^{5/6}}{4}
	\end{equation}
 for $|y|\leq {M^{-1/6}/8}$ with sufficiently large $M>0$ such that $7\leq  M^{5/6}/{8}$.

	Recalling \eqref{EP2_2D3-p}, we see that
	\begin{equation*}\label{Uyyyeq}
		\partial_s \partial_y^3U + D^U_3 \partial_y^3U + 	\mathcal{U} \partial_y^4 U  =  F^U_3,
	\end{equation*}
	where $\mathcal{U}$ is defined in \eqref{U-p}, and 
	\begin{subequations}
		\begin{align*}
			D^U_3 &:= 4 + \frac{4U_y}{1-\dot{\tau}} ,\qquad 
			F^U_3 := - \frac{3U_{yy}^2}{1-\dot{\tau}} -\frac{ e^s \partial_y^4\Phi}{1-\dot{\tau}}.
		\end{align*}
	\end{subequations}

	\emph{Lower bound of $D^U_3$}:
	Note that by \eqref{y-w-y2}, we have
	\begin{equation*}
		1+\overline{U}'\geq 1-\frac{1}{1+\frac{3y^2}{(3y^2+1)^{2/3}}}\geq \frac{y^2}{3(1+y^2)}.
	\end{equation*}
	Using this, \eqref{EP2_1D1-p}, \eqref{Uy1-p} and \eqref{tau_dec},  we have 
	\begin{equation*}
		\begin{split}
			D^U_3&= 4(1+\overline{U}')+4(U_y-\overline{U}')+\frac{4\dot{\tau}U_y}{1-\dot{\tau}}
			\geq \frac{4y^2}{3(1+y^2)}-\frac{4y^2}{10(1+y^2)}-O(\veps).
		\end{split}
	\end{equation*} 
	One can choose  $\ve_0=\veps_0(l)>0$ such that for all $\veps\in (0,\veps_0)$, it holds that 
	\begin{equation}\label{Uy3D'}
		D^U_3\geq \frac{y^2}{2(1+y^2)}\geq \frac{l^2}{2(1+l^2)}
	\end{equation} 
	as long as $|y|>l>0$. In particular, we have that for $|y|>\frac{1}{8M^{1/6}}$,
	\begin{equation*}
		D^U_3\geq \frac{1}{2(64M^{1/3}+1)}.
	\end{equation*}

	\emph{Estimate of $F^U_3$} : 
	By \eqref{EP2_1D3-p}, \eqref{d-t-e-p} and \eqref{Pyes}, 
	\begin{equation}\label{Uy3F}
		\begin{split}
			\|F^U_3\|_{L^{\infty}}&\leq 	(1+O(\veps))(3\|U_{yy}\|_{L^{\infty}}^2+\|e^s\partial_y^4\Phi\|_{L^{\infty}})
			\leq (1+O(\veps))(3\cdot 15^2+Ce^{-s})
			\leq 4\cdot 15^2.
		\end{split}
	\end{equation}

	\emph{Asymptotic behavior of $\partial_y^3U$ at $|y|=\infty$} : 
	Next, we claim 
	\begin{equation}\label{Wy3_dec}
		\limsup_{|y|\rightarrow \infty}|\partial_y^3U(y,s)|< \frac{M^{5/6}}{2}.
	\end{equation} 
	As we have checked in \eqref{Uy3D'} and \eqref{Uy3F}, we have 
	\begin{equation*}
		\inf_{|y|>1}D^U_3\geq 1/2, \qquad \|F^U_3\|_{L^{\infty}\left(|y|>1\right)}\leq 4\cdot 15^2.
	\end{equation*} 
	 Applying   Lemma~\ref{rmk2}, we have
	\begin{equation*}
		\limsup_{|y|\rightarrow \infty}|\partial_y^3U(y,s)|\leq 	\limsup_{|y|\rightarrow \infty}|\partial_y^3U(y,s_0)|e^{-(s-s_0)/2}+8\cdot 15^2 \leq M^{5/6}/4 
	\end{equation*}
	for sufficiently large $M>0$. Here, we used \eqref{init_24-p} for the last inequality.
	Hence, applying Theorem~\ref{max_2} with \eqref{init_24-p}, \eqref{Uy3loc} and  \eqref{Uy3D'}-\eqref{Wy3_dec},  we finish the proof.
\end{proof}

\subsubsection{The fourth derivative of $U$.}
\begin{lemma}\label{Wy4_lem-p}
Under the assumptions of Proposition~\ref{mainprop}, we have
	\begin{equation*}
		\|\partial_y^4U(\cdot,s)\|_{L^{\infty}}\leq \frac{M}{2}
	\end{equation*}
	for all $s\in[s_0,\sigma_1]$.
\end{lemma}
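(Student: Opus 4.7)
My strategy closely mirrors Lemma~\ref{w-3-est}, but exploits a crucial simplification: the damping coefficient for $\partial_y^4 U$ will turn out to be uniformly positive across all of $\mathbb{R}$, so no local/far split near $y=0$ is needed. Starting from \eqref{EP2_2D4-p}, I would rewrite the equation as
\begin{equation*}
\partial_s \partial_y^4 U + D^U_4 \, \partial_y^4 U + \mathcal{U}\, \partial_y^5 U = F^U_4,
\end{equation*}
with $D^U_4 := \frac{11}{2} + \frac{5 U_y}{1-\dot{\tau}}$ and $F^U_4 := -\frac{10 U_{yy} \partial_y^3 U}{1-\dot{\tau}} - \frac{e^s \partial_y^5 \Phi}{1-\dot{\tau}}$, where $\mathcal{U}$ is defined in \eqref{U-p}.

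The key observation is that by \eqref{Uy1-p} and \eqref{d-t-e-p},
\begin{equation*}
D^U_4 \geq \frac{11}{2} - 5(1 + O(\veps)) \geq \frac{1}{4}
\end{equation*}
uniformly in $y \in \mathbb{R}$ for sufficiently small $\veps$. In contrast to $D^U_3$, the damping $D^U_4$ does not degenerate at $y = 0$, so a single global argument suffices. For the forcing I would use \eqref{EP2_1D3-p} giving $|U_{yy}| \leq 15$, the bootstrap bound \eqref{EP2_1D5-p} for $\partial_y^3 U$, Lemma~\ref{Phider_lem} giving $\|\partial_y^5 \Phi\|_{L^\infty} \leq C e^{-2s}$, and \eqref{d-t-e-p}, to conclude
\begin{equation*}
\|F^U_4(\cdot, s)\|_{L^\infty} \leq (1+O(\veps))\left( 150\, M^{5/6} + C e^{-s}\right) \leq 200\, M^{5/6}.
\end{equation*}

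Integrating along the characteristic $\psi(y;s)$ defined by $\tfrac{d}{ds}\psi = \mathcal{U}(\psi,s)$, $\psi(y;s_0)=y$, and combining the damping lower bound with the forcing estimate yields
\begin{equation*}
\|\partial_y^4 U(\cdot, s)\|_{L^\infty} \leq \|\partial_y^4 U(\cdot, s_0)\|_{L^\infty}\, e^{-(s-s_0)/4} + 800\, M^{5/6}.
\end{equation*}
The change of variables $\partial_x^4 u = e^{11s/2} \partial_y^4 U$ combined with \eqref{init_24-p} gives $\|\partial_y^4 U(\cdot, s_0)\|_{L^\infty} \leq \veps^{11/2}\cdot \veps^{-11/2} = 1$, so
\begin{equation*}
\|\partial_y^4 U(\cdot, s)\|_{L^\infty} \leq 1 + 800\, M^{5/6},
\end{equation*}
which is bounded by $M/2$ once $M$ is chosen sufficiently large, since $M$ dominates $M^{5/6}$.

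The main obstacle I anticipate is to ensure that the quadratic-in-lower-order term $10 U_{yy} \partial_y^3 U / (1-\dot{\tau})$ in $F^U_4$ does not contaminate the bootstrap. The point is that $\|U_{yy}\|_{L^\infty}$ is an absolute constant (independent of $M$) thanks to the preceding Lemma~\ref{Wy2_lem}, so the bad term scales only as $M^{5/6}$, which is readily absorbed. No weighted maximum principle of the Lemma~\ref{Wy1_lem_p} type nor a separate near-origin estimate is required, which makes this lemma structurally the simplest of the four derivative estimates on $U$.
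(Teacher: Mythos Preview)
Your proposal is correct and follows essentially the same approach as the paper: rewrite \eqref{EP2_2D4-p} in damping--transport form, use \eqref{Uy1-p} and \eqref{d-t-e-p} to get the uniform lower bound $D^U_4\ge 1/4$, bound the forcing via \eqref{EP2_1D3-p}, \eqref{EP2_1D5-p}, \eqref{Pyes}, \eqref{d-t-e-p}, and integrate along characteristics with the initial bound from \eqref{init_24-p}. The only cosmetic differences are in the constants and that you make explicit the observation (implicit in the paper) that no near-origin/far-field split is needed here because the damping is globally positive.
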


\begin{proof}
	From \eqref{EP2_2D4-p}, we have
	\begin{equation}\label{U4-y}
		\partial_s \partial_y^4U + D^U_4 \partial_y^4U + \mathcal{U} \partial_y^5 U  = F^U_4, 
	\end{equation}
	where $\mathcal{U}$ is defined in \eqref{U-p}, and
	\begin{subequations}
		\begin{align*}
			D^U_4 &:= \frac{11}{2} + \frac{5U_y}{1-\dot{\tau}}, \qquad F^U_4 := - \frac{10U_{yy}\partial_y^3U}{1-\dot{\tau}} -\frac{ e^s\partial_y^5\Phi}{1-\dot{\tau}}.
		\end{align*}
	\end{subequations}
	By \eqref{Uy1-p} and \eqref{tau_dec}, we obtain 
	\begin{equation}\label{DU4}
		D^U_4 \geq \frac{11}{2}-\frac{5|U_y|}{1-\dot{\tau}}\geq \frac{11}{2}-5-O(\veps)\geq \frac{1}{4}.
	\end{equation}
	On the other hand, thanks to  \eqref{EP2_1D3-p}, \eqref{EP2_1D5-p}, \eqref{d-t-e-p} and \eqref{Pyes}, it is straightforward to check that
	\begin{equation}\label{FU4}
		\begin{split}
			\|F^U_4(\cdot,s)\|_{L^{\infty}}
			\leq (1+O(\veps))(150M^{5/6}+1).
		\end{split}
	\end{equation}
	
	Let $\psi(y;s)$ be the characteristic curve along $\mathcal{U}$, i.e., $\frac{d}{ds} \psi(y;s)=\mathcal{U}(\psi(y;s),s)$, $\psi(y;s_0)=y$. 
	By integrating \eqref{U4-y} along $\psi$, we have 
	\begin{equation*}
		\partial_y^4U(\psi(y;s),s)=\partial_y^4U(y,s_0)e^{-\int^s_{s_0}(D^U_4\circ\psi)\,ds'}+\int^s_{s_0}e^{-\int^s_{s'}(D^U_4\circ\psi)\,ds''}(F^U_4\circ\psi)\,ds', 
	\end{equation*}
	for which we use \eqref{init_24-p}, \eqref{DU4} and \eqref{FU4} to get
	\begin{equation*}
		\|\partial_y^4U\|_{L^{\infty}}\leq 	\|\partial_y^4U(\cdot,s_0)\|_{L^{\infty}}e^{-\frac{1}{4}(s-s_0)}+4(1+O(\veps))(150M^{5/6}+1)\leq \frac{M}{2}
	\end{equation*}
	for sufficiently large $M>0$. This completes the proof.
\end{proof}

\subsubsection{Weighted estimate of $\rho-1$.}
%
Here we establish the weighted estimate for $P$, which plays a crucial role in obtaining the blow-up profile of $\rho-1$. 
\begin{lemma}\label{propW}
Under the assumptions of Proposition~\ref{mainprop},  we have
	\begin{equation*}
		|P(y,s)| <  \frac{A}{y^{2/3}+8}
	\end{equation*}
for all $y\in\mathbb{R}$ and $s\in[s_0,\sigma_1]$.
\end{lemma}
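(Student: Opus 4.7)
The plan is to introduce the weighted quantity $W(y,s):=(y^{2/3}+8)P(y,s)$ and show $\|W(\cdot,s)\|_{L^{\infty}(\mathbb{R})}<A$, which is equivalent to the desired bound. Multiplying \eqref{EP2_1-p} by $(y^{2/3}+8)$ and using $(y^{2/3}+8)P_y=W_y-\tfrac{2}{3}y^{-1/3}(y^{2/3}+8)^{-1}W$, one derives the transport-damping equation
\begin{equation*}
\partial_s W + \mathcal{U}\,W_y + D^W W = G^W,
\end{equation*}
with
\begin{equation*}
D^W := 1 + \frac{U_y}{1-\dot\tau} - \frac{2\mathcal{U}}{3\,y^{1/3}(y^{2/3}+8)}, \qquad G^W := -(y^{2/3}+8)\frac{e^{-s}U_y}{1-\dot\tau}.
\end{equation*}
The pointwise control $|U_y(y,s)|\lesssim (y^{2/3}+1)^{-1}$, which follows from \eqref{Utildey_M-p} together with the profile asymptotics \eqref{y-w-y} (Appendix), yields $\|G^W(\cdot,s)\|_{L^{\infty}}\lesssim e^{-s}$, and $|W(y,s_0)|\leq A/2$ is exactly \eqref{init_ttP-p}. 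I would then split the plane at some threshold $|y|=l$ and argue locally and globally.

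For the local region $|y|\leq l$: The constraint $U(0,s)=0$ from \eqref{constraint-p} makes $\psi(0;s)\equiv 0$ an invariant characteristic of $\mathcal{U}$. Evaluating \eqref{EP2_1-p} at $y=0$ and using $U_y(0,s)=-1$ produces the scalar ODE
\begin{equation*}
\partial_s P(0,s) - \frac{\dot\tau}{1-\dot\tau}P(0,s) = -\mathcal{U}(0,s)\,P_y(0,s) + \frac{e^{-s}}{1-\dot\tau},
\end{equation*}
whose right-hand side is $O(e^{-s})$ by \eqref{tau_dec}, \eqref{temp_1-p}, and the uniform bound on $\|P_y(\cdot,s)\|_{L^{\infty}}$ from Lemma~\ref{Pylem}. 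Combined with $|P(0,s_0)|=\veps|\rho_0(0)-1|\leq A/16$ from \eqref{init_ttP-p} at $y=0$, Gr\"onwall yields $|P(0,s)|\leq A/16 + O(\veps)$. A second-order Taylor expansion of $P(\cdot,s)$ about $0$ together with $\|P_y\|_{L^\infty},\|P_{yy}\|_{L^\infty}\leq C$ from Lemma~\ref{Pylem} then gives
\begin{equation*}
(y^{2/3}+8)|P(y,s)|\leq (l^{2/3}+8)\bigl(A/16 + C\,l + C\,l^2\bigr) + O(\veps) < \tfrac{3A}{4}
\end{equation*}
for $l=l(A,C)$ and $\veps$ sufficiently small.

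For the global region $|y|\geq l$: Substituting $\mathcal{U}=U/(1-\dot\tau)+3y/2+e^{s/2}(\kappa-\dot\xi)/(1-\dot\tau)$ into $D^W$ rewrites it as
\begin{equation*}
D^W = \frac{8}{y^{2/3}+8} + \frac{U_y}{1-\dot\tau} - \frac{2U/3}{(1-\dot\tau)\,y^{1/3}(y^{2/3}+8)} + O(e^{-s}).
\end{equation*}
The bootstrap bound \eqref{EP2_1D1-p} and the profile asymptotics $\overline{U}\sim -y^{1/3}$, $\overline{U}'\sim -(3y^{2/3})^{-1}$ (from \eqref{Burgers_SS'} and \eqref{y-w-y}) together show that $D^W\geq \lambda>0$ uniformly on $\{|y|\geq l\}\times[s_0,\sigma_1]$ for the chosen $l$ and sufficiently small $\veps$, with $D^W\sim 25/(3y^{2/3})$ as $|y|\to\infty$. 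Combining the positive damping with $\|G^W\|_{L^{\infty}}\lesssim e^{-s}$, the boundary control $|W(\pm l,s)|<3A/4$ from the local step, the initial bound $|W(y,s_0)|\leq A/2$, and the decay $\limsup_{|y|\to\infty}|W(y,s_0)|=0$ implied by \eqref{init_ttP-p}, an application of Lemma~\ref{max_2} (using Lemma~\ref{rmk2} to handle the asymptotic behavior at $|y|=\infty$) yields $|W(y,s)|<A$ on $\{|y|\geq l\}$.

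The principal obstacle is the damping term: a direct expansion at the origin shows $D^W\sim -y^{2/3}/24+3y^2+O(\veps)$ for $|y|$ small, which is strictly negative in a deleted neighborhood of $0$. This forces the local/global split, and the local argument succeeds only because of the exact constraint $U(0,s)=0$ (making $\psi(0;\cdot)\equiv 0$ a characteristic) and the modulation smallness $|\mathcal{U}(0,s)|=|e^{s/2}(\kappa-\dot\xi)/(1-\dot\tau)|\lesssim e^{-s}$ from \eqref{Eq_Modul2-p}. The improvement from the bootstrap coefficient $2A$ down to $A$ requires that every $O(\veps)$ and $O(e^{-s})$ perturbation be dominated by the gap $A-\tfrac{3A}{4}=A/4$, which is precisely the role of the calibration built into the definition of $A$ in \eqref{Aeq}.
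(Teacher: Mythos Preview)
Your overall strategy---multiply by the weight $y^{2/3}+8$, derive a transport--damping equation for $W$, split into local and far-field regions, and close via a maximum principle---matches the paper's. However, there is a genuine gap in the far-field step that prevents the argument from closing.

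You assert that $D^W\ge\lambda>0$ uniformly on $\{|y|\ge l\}$ and then invoke Lemma~\ref{max_2}. But your own asymptotic computation $D^W\sim 25/(3y^{2/3})$ as $|y|\to\infty$ shows this is false: the damping is merely nonnegative and decays to zero. Lemma~\ref{max_2} requires a strictly positive uniform lower bound $\lambda_D$ on the damping (hypothesis \eqref{max_2_3}) together with the quantitative condition $m_0\lambda_D>F_0/(2-2\delta)$, so it is inapplicable here. The paper handles this by a direct contradiction argument at the moving maximum point $y_*(s)$: once one shows $D^{\wt P}(y,s)\ge 0$ for $|y|\ge 3$ (via \eqref{0605_m_7}) and $|F^{\wt P}|\le Ce^{-s}$, the Rademacher-type estimate $\frac{d}{ds}\|\wt P(\cdot,s)\|_{L^\infty}\le Ce^{-s}$ holds at the supremum, and integrating gives only $O(\veps)$ growth---enough to contradict a jump from $3A/4$ to $A$. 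The key point is that \emph{nonnegative} damping plus $s$-integrable forcing suffices; a strictly positive damping floor is not needed and is not available.

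There is a second, smaller issue with your local step. Your Taylor expansion around $y=0$ using $\|P_y\|_{L^\infty}\le C_1$ forces $l$ to be small (roughly $l\lesssim A/C_1$), and your own expansion $D^W\approx -y^{2/3}/24+3y^2$ near the origin shows $D^W<0$ on an annulus $l\le|y|\lesssim (1/72)^{3/4}$, so the damping is negative on part of your ``global'' region regardless of the first issue. The paper avoids this by using the much simpler bound $\|P(\cdot,s)\|_{L^\infty}\le(1+C\veps)\|P(\cdot,s_0)\|_{L^\infty}+C\veps\le A/16+O(\veps)$ from Lemma~\ref{Lemma1} (valid for \emph{all} $y$, not just $y=0$), which immediately gives $|\wt P|<3A/4$ on the fixed region $|y|\le 3$ and pushes the delicate analysis out to $|y|\ge 3$ where $D^{\wt P}\ge 0$ is available.
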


\begin{proof}
Setting $\wt{P} := (y^{2/3}+8)P$, 
we see from \eqref{EP2_1-p} that $\wt{P}$ satisfies 
\begin{equation}\label{Weq}
	\partial_s \wt{P} + D^{\wt{P}}\wt{P} + \mathcal{U}\wt{P}_y = F^{\wt{P}},
\end{equation}
where $\mathcal{U}$ is defined in \eqref{U-p} and
\begin{subequations}
	\begin{align*}
		D^{\wt{P}} &:= 1+U_y-\frac{2}{3}\frac{y^{2/3}}{y^{2/3}+8}\left(\frac{3}{2}+\frac{U}{y}\right),
		\\
		F^{\wt{P}} &:= -\frac{e^{-s}}{1-\dot{\tau}}U_y(y^{2/3}+8)+\left(-\frac{\dot{\tau}}{1-\dot{\tau}}U_y+\frac{2}{3}\frac{y^{2/3}}{(y^{2/3}+8)}\left(\frac{\dot{\tau}}{1-\dot{\tau}}\frac{U}{y}+\frac{1}{y}\frac{e^{s/2}(\kappa-\dot{\xi})}{1-\dot{\tau}}\right)\right)\wt{P}.
	\end{align*}
\end{subequations}
	We shall show that
		\begin{equation}\label{claim_W}
		|\wt{P}(y,s)| < A.
	\end{equation}
	First, by \eqref{Utildey_M-p} and \eqref{0605_m_7},  we see that  
	\begin{equation}\label{W_D}
		\begin{split}
			D^{\wt{P}}(y,s)&=1+(U_y-\overline{U}')+\overline{U}'-\frac{2y^{2/3}}{3(y^{2/3}+8)}\left(\frac{3}{2}+\frac{\overline{U}}{y}+\frac{U-\overline{U}}{y}\right)
			\\
			&\geq 1-\frac{1}{y^{2/3}+8}+\overline{U}'-\frac{2y^{2/3}}{3(y^{2/3}+8)}\left(\frac{3}{2}+\frac{\overline U}{y}+\frac{1}{y}\int_{0}^{y}{\frac{dy'}{{y'}^{2/3}+8}}\right) 
			\\
			&=: D^{\wt{P}}_{-}(y)\geq 0 \quad \text{for}\quad|y|\geq 3.
		\end{split}
	\end{equation}
	Next, we shall check that 
	\begin{equation}\label{W_F}
		\|F^{\wt{P}}(y,s)\|_{L^{\infty}(|y|\geq 3)}\leq C e^{-s}.
	\end{equation}
By \eqref{Utildey_M-p}, \eqref{d-t-e-p} and the fact that $|(y^{2/3}+8)\overline{U}'|\leq C$ from \eqref{y-w-y2}, we obtain 
	\begin{equation}\label{F_Pyy_2}
		\begin{split}
			\left|\frac{e^{-s}}{1-\dot{\tau}}U_y(y^{2/3}+8)\right|&\leq (1+O(\veps))e^{-s}(\|(y^{2/3}+8)\overline{U}'\|_{L^{\infty}}+\|(y^{2/3}+8)(U_y-\overline{U}')\|_{L^{\infty}})
			 \leq Ce^{-s}.
		\end{split}
	\end{equation}
Note from \eqref{constraint-p} and \eqref{Uy1-p}  that
  \begin{equation*}
 | U(y)|   \leq |y|.
  \end{equation*}
Using this and \eqref{Wweak-p},    \eqref{Uy1-p}, \eqref{tau_dec}, \eqref{d-t-e-p} and \eqref{temp_1-p}, we have  
\begin{equation*}
	\begin{split}
	\left|-\frac{\dot{\tau}}{1-\dot{\tau}}U_y+\frac{2}{3}\frac{y^{2/3}}{(y^{2/3}+8)}\left(\frac{\dot{\tau}}{1-\dot{\tau}}\frac{U}{y}+\frac{1}{y}\frac{e^{s/2}(\kappa-\dot{\xi})}{1-\dot{\tau}}\right)\right| |\wt{P}| 
	&\leq C e^{-s} 
	\end{split}
\end{equation*}
for some $C>0$. 
 This and \eqref{F_Pyy_2} imply \eqref{W_F}.

	
	\vspace{1\baselineskip}
	
	Now we claim that \eqref{claim_W} holds.
	Suppose to the contrary that it fails.  Then $s_2 := \min\{s\in [s_0,\sigma_1] : \|\wt{P}(\cdot,s)\|_{L^{\infty}}=A\}$ is well-defined. We also see that there is a number $s_1\in(s_0,s_2)$ such that
	\begin{equation}\label{2.45}
		\frac{3A}{4} = \|\wt{P}(\cdot,s_1)\|_{L^{\infty}} \leq \|\wt{P}(\cdot,s)\|_{L^{\infty}} < A \qquad \text{for all } s\in(s_1,s_2)
	\end{equation}
	 since $\wt{P}\in C([s_0,\sigma_1]; L^{\infty}(\mathbb{R}))$ and $\|\wt{P}(\cdot,s_0)\|_{L^{\infty}} \leq A/2$ from \eqref{init_ttP-p}.

	First we note that  
	\begin{equation}\label{2.40}
		\limsup_{|y|\rightarrow \infty}|\wt{P}(y,s) |<\frac{3A}{4}. 
	\end{equation} 
 To see this, 
	 we apply Lemma~\ref{rmk2} with \eqref{W_D} and \eqref{W_F}, i.e., $\inf_{|y|\geq 3}D^{\wt{P}}(y,s)\geq 0$ and $\|F^{\wt{P}}(y,s)\|_{L^{\infty}(|y|\geq 3)}\leq Ce^{-s}$, respectively, to get 
	\begin{equation*}
		\limsup_{|y|\rightarrow \infty}|\wt{P}(y,s)|\leq \limsup_{|y|\rightarrow \infty}|\wt{P}(y,s_0)|+C\veps \le \frac{A}{2} + C\ve, 
	\end{equation*}
	where we used \eqref{init_ttP-p} for the last inequality. Thus, \eqref{2.40} holds for small $\ve>0$.

	Then, for each $s\in[s_1,s_2]$, thanks to the fact that   $\wt{P}$ is smooth and decay as $|y|\rightarrow \infty$ by \eqref{2.40},  we see that $C^{\wt{P}}:=\{ y \in \mathbb{R} :  \partial_y \wt{P}(y,s) =0 \}$  is a non-empty closed set. Hence,   one can choose a point $y_*(s)\in C^{\wt{P}} $ such that  
	\begin{equation*}
		\|\wt{P}(\cdot, s) \|_{L^{\infty}} = |\wt{P}(y_*(s), s)|.
	\end{equation*} 
	By the definition of $y_*(s)$ and the choice of $s_1$ in \eqref{2.45},  we see that $\wt{P}(y_*(s), s) \neq 0$ for all $s \in [s_1,s_2]$.
	For any fixed $s$, it holds that $\|\wt{P}(\cdot,s-h)\|_{L^\infty} \geq |\wt{P}(y_*(s) - h \mathcal{U}(y_*(s),s), s - h)| $ for any small $h>0$,  by the definition of $y_*$. 
	Then, it is straightforward to check that 
	\begin{equation}\label{AP_R1-p} 
		\begin{split}
			\lim_{h \to 0^+} \frac{\|\wt{P}(\cdot,s-h)\|_{L^\infty} - \|\wt{P}(\cdot,s)\|_{L^\infty}}{-h} 
			& \leq (\partial_s + \mathcal{U}(y_*(s),s)\partial_y)|\wt{P}|(y_*(s),s)
		\end{split}
	\end{equation} 
	provided that the limit on the LHS of \eqref{AP_R1-p} exists. Note that by Rademacher's theorem, 
	$ \| \wt{P}(\cdot, s)\|_{L^{\infty}}$, being Lipschitz, is differentiable at almost all $s\in[s_1, s_2]$.  From \eqref{AP_R1-p} and the fact that $\partial_y \wt{P}(y_*(s), s) =0$, we see that 
	\begin{equation} \label{wtP-thm-p}
		\begin{split}
			\frac{d}{ds} \| \wt{P}(\cdot, s)\|_{L^{\infty}} 
			& \leq  \left\{ \begin{array}{l l}
				\partial_s \wt{P}(y, s)|_{y=y_*(s)} & \text{if } \wt{P}(y_*(s),s)>0, \\
				-\partial_s \wt{P}(y, s)|_{y=y_*(s)} & \text{if } \wt{P}(y_*(s),s)<0. 
			\end{array}
			\right.  
		\end{split}
	\end{equation} 
	
	On the other hand, we see from \eqref{EP2_1P2}  that 
	\begin{equation*}
		|P(y,s)|\leq (1+C\veps)\|P(\cdot,s_0)\|_{L^{\infty}}+C\veps=(1+C\veps)\veps\|\rho_0-1\|_{L^{\infty}}+C\veps,
	\end{equation*}
	which together with \eqref{init_ttP-p}  yields 
	\begin{equation*}
		|\wt{P}(y,s)|=(y^{2/3}+8)|P(y,s)|\leq (3^{2/3}+8)\frac{A}{16}+C\veps < \frac{3A}{4} \quad  \text{for } |y|\leq 3.
	\end{equation*}
	This together with \eqref{2.45} and the definition of $y_*(s)$ implies that  
	\begin{equation}\label{y*}
		|y_*(s)|\geq 3\qquad \text{for all } s\in[s_1,s_2].
	\end{equation}
	
	For any $s$, suppose that $\wt{P}(y_*(s),s)>0$ holds. In this case, thanks to \eqref{W_D} and \eqref{y*}, it holds that 
	\begin{equation}\label{W_D'}
		D^{\wt{P}}(y_*(s),s)\wt{P}(y_*(s),s)\geq D^{\wt{P}}_{-}(y_*(s))\|\wt{P}(\cdot,s)\|_{L^{\infty}}\geq 0.
	\end{equation}
 In view of \eqref{Weq} with \eqref{W_F}, \eqref{W_D'} and the fact that $\partial_y\wt{P}(y_*(s),s)=0$, we find 
	\begin{equation}\label{wtP_pos}
		\partial_s\wt{P}(y_*(s),s)\leq  Ce^{-s}-D^{\wt{P}}(y_*,s)\wt{P}(y_*,s)\leq C e^{-s}.
	\end{equation}
	Similarly, for the case that $\wt{P}(y_*(s),s)<0$, we have
	\begin{equation}\label{wtP_neg}
		-\partial_s\wt{P}(y_*,s)\leq C e^{-s}+D^{\wt{P}}(y_*,s)\wt{P}(y_*,s) \leq Ce^{-s}.
	\end{equation}
	Then we integrate \eqref{wtP-thm-p}, using  \eqref{wtP_pos}, \eqref{wtP_neg}
	and the fact that $\|\wt{P}(\cdot,s_1)\|_{L^{\infty}}=3A/4$, 
	to have
	\begin{equation*}
		\begin{split}
			\|\wt{P}(\cdot,s)\|_{L^{\infty}}&\leq \frac{3A}{4} +C \int^s_{s_1}e^{-s'}\,ds'  \leq \frac{3A}{4} +C\veps <A
		\end{split}
	\end{equation*}
	for all $s\geq s_1$ provided that $\ve>0$ is sufficiently small. 
	This  contradiction 
	  completes the proof.
	\end{proof}

\subsubsection{Weighted estimate of $U_y$.}
Our main goal of this section is to prove Lemma~\ref{mainprop_1-p}. For this, we first remark that the assumption \eqref{Wweak-p} implies 
\begin{equation}
	\|(y^{2/3}+8)\Phi_{yy}\|_{L^{\infty}}\leq Ce^{-2s}\label{Phiyy_decay-p}
\end{equation}
for a positive constant $C$.
To see this, we consider the change of variable $\tilde{y} = e^{-3s/2}y$. Using this and \eqref{EP2_3-p}, we have
	\[
	-\partial_{\tilde{y}\tilde{y}}\wt{\Phi} = f(\tilde{y},s) + 1- e^{\wt{\Phi}},
	\]
	where $\wt{\Phi}(\tilde{y},s) = \Phi(y,s)$ and $f(\tilde{y},s) = P(y,s)e^s$. Thanks to \eqref{Wweak-p}, we also see that
	\begin{equation*}
		 |\tilde{y}^{2/3} f(\tilde{y},s)|\leq \|e^{-s}(\tilde{y}^{2/3}e^s + 8)f(\tilde{y},s)\|_{L^{\infty}_{\tilde y}}\leq 2A.
	\end{equation*}
	Then, applying Lemma~\ref{Lem_Pois}, together with \eqref{Aeq}, we obtain 
	\begin{equation*}
		C \geq |(\tilde{y}^{2/3}+1)\partial_{\tilde{y}}^2\wt{\Phi}| = |(y^{2/3}+e^s)e^{2s}\Phi_{yy}|\geq |(y^{2/3}+8)e^{2s}\Phi_{yy}|.
	\end{equation*}
	This implies \eqref{Phiyy_decay-p}.

Next, we present an auxiliary lemma which will be used in the proof of Lemma~\ref{mainprop_1-p}.
\begin{lemma}
Under the  assumptions of Proposition~\ref{mainprop}, we have
	\begin{equation}\label{Ut_dec_p}
		\limsup_{|y|\rightarrow \infty} |(y^{2/3}+8)(U_y-\overline{U}')|<\frac{23}{25} 
	\end{equation}
	for all $s\in[s_0,\sigma_1]$.
\end{lemma}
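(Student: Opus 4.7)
The plan is to introduce the weighted unknown
\[
\tilde W(y,s) := (y^{2/3}+8)\bigl(U_y(y,s)-\overline U'(y)\bigr)
\]
and derive from \eqref{Eq_diff-p} a transport-type equation
\[
\partial_s \tilde W + D^{\tilde W}\tilde W + \mathcal{U}\,\partial_y\tilde W = F^{\tilde W},
\]
where, after converting $(y^{2/3}+8)\tilde U_{yy}$ into $\partial_y\tilde W-\tfrac{2}{3}y^{-1/3}\tilde U_y$ via the product rule, the damping and forcing are
\[
D^{\tilde W}=1+\frac{\tilde U_y+2\overline U'}{1-\dot\tau}-\frac{\tfrac{2}{3}y^{-1/3}\mathcal{U}}{y^{2/3}+8},\qquad F^{\tilde W}=(y^{2/3}+8)\times(\text{RHS of \eqref{Eq_diff-p}}).
\]
I then apply Lemma~\ref{rmk2} on $\{|y|\ge N\}$ for $N$ sufficiently large.

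The first and most delicate step is to verify $D^{\tilde W}\ge 0$ on $\{|y|\ge N\}$. Using the asymptotics $\overline U(y)\sim -y^{1/3}$ and $\overline U'(y)\sim -\tfrac{1}{3}y^{-2/3}$ derived from \eqref{Burgers_SS'}, together with $\mathcal{U}=\tfrac32y+U/(1-\dot\tau)+e^{s/2}(\kappa-\dot\xi)/(1-\dot\tau)$, the principal part reduces to
\[
D^{\tilde W}\approx\frac{8}{y^{2/3}+8}+2\overline U'+\frac{2}{3(y^{2/3}+8)}+O(\veps)= \frac{26/3}{y^{2/3}+8}-\frac{2}{3y^{2/3}}+(\text{smaller}),
\]
which is nonnegative for $|y|$ large enough (requiring $26\,y^{2/3}\ge 2(y^{2/3}+8)$). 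The contribution of $\tilde U_y/(1-\dot\tau)$ is controlled by the bootstrap bound \eqref{Utildey_M-p}, and the error terms from $\dot\tau$ and $e^{s/2}(\kappa-\dot\xi)$ are $O(\veps)$ by \eqref{tau_dec} and \eqref{temp_1-p}.

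Next, I estimate the forcing. The $(y^{2/3}+8)e^s\Phi_{yy}$ contribution is $O(e^{-s})$ by \eqref{Phiyy_decay-p}, and each of $\dot\tau\overline U\overline U''$, $e^{s/2}(\kappa-\dot\xi)\overline U''$, $\dot\tau(\overline U')^2$ multiplied by $(y^{2/3}+8)$ is $O(e^{-s})$ after using $|\overline U|\lesssim |y|^{1/3}$, $|\overline U''|\lesssim |y|^{-5/3}$ and $|\overline U'|\lesssim |y|^{-2/3}$ from \eqref{Burgers_SS'}. The only term that does not carry $s$-decay, namely $(y^{2/3}+8)\tilde U\,\overline U''/(1-\dot\tau)$, is $O(|y|^{-2/3})$ and hence vanishes uniformly in $s$ as $|y|\to\infty$, so it can be absorbed by enlarging $N$ (equivalently, by running the characteristic argument behind Lemma~\ref{rmk2} and noting that characteristics expand exponentially by \eqref{U_far}). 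Hence Lemma~\ref{rmk2} applies with $\lambda_D=0$ and $\lambda_F=1$, giving
\[
\limsup_{|y|\to\infty}|\tilde W(y,s)|\ \le\ \limsup_{|y|\to\infty}|\tilde W(y,s_0)|+C\veps.
\]

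To close the estimate, I change back to physical variables at $s=s_0=-\log\veps$. Since $U_y(y,s_0)=\veps\,\partial_x u_0(\veps^{3/2}y)$ and $x^{2/3}=\veps\, y^{2/3}$, the assumption \eqref{4.3a2-p} gives $\limsup_{|y|\to\infty}|y^{2/3}U_y(y,s_0)|\le \tfrac12$, while $|y^{2/3}\overline U'(y)|\to \tfrac13$ by \eqref{Burgers_SS'}; since the extra $8$ in the weight is negligible at infinity, the triangle inequality yields
\[
\limsup_{|y|\to\infty}|\tilde W(y,s_0)|\ \le\ \tfrac12+\tfrac13=\tfrac56<\tfrac{23}{25},
\]
so for $\veps$ sufficiently small, $\tfrac56+C\veps<\tfrac{23}{25}$, completing the proof. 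The main obstacle is verifying the nonnegativity of $D^{\tilde W}$ on $\{|y|\ge N\}$ with explicit control of all the bootstrap-induced errors, because the dominant positive contribution is only $\sim 8/(y^{2/3}+8)$ and could be overwhelmed by $2\overline U'\sim-2/(3y^{2/3})$ or by the $O(\veps)$ corrections if these are not tracked carefully.
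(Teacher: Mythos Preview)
Your approach is workable but differs from the paper's and contains a gap in the application of Lemma~\ref{rmk2}. The paper does \emph{not} study $\tilde W=(y^{2/3}+8)(U_y-\overline U')$ directly for this lemma; instead it sets $\mu:=(y^{2/3}+8)U_y$ and uses the equation \eqref{EP2_2D1-p} for $U_y$ itself. The point is that the only forcing in the $\mu$-equation comes from $(y^{2/3}+8)e^s\Phi_{yy}$ and lower-order $\dot\tau$, $e^{s/2}(\kappa-\dot\xi)$ contributions, all of which are $O(e^{-s})$ by \eqref{Phiyy_decay-p}, \eqref{tau_dec}, \eqref{temp_1-p}. Lemma~\ref{rmk2} then applies cleanly with $\lambda_D\ge 0$, $\lambda_F=1$, yielding $\limsup_{|y|\to\infty}|\mu|\le 1/2+C\varepsilon$ from \eqref{4.3a2-p}. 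The desired bound on $\tilde W$ follows by the triangle inequality and $\lim_{|y|\to\infty}|(y^{2/3}+8)\overline U'|=1/3$.

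Your route introduces the extra forcing $(y^{2/3}+8)\tilde U\,\overline U''/(1-\dot\tau)$, which you correctly note is $O(|y|^{-2/3})$ but carries no $s$-decay. This means Lemma~\ref{rmk2} does \emph{not} apply with $\lambda_F=1$ as you claim: the hypothesis \eqref{rmk2_ass2} requires a uniform bound $\|F\|_{L^\infty(|y|\ge N)}\le F_0e^{-s\lambda_F}$, and no finite $N$ makes this term fit with $\lambda_F>0$. Since you also have $\lambda_D=0$ (the damping $D^{\tilde W}\sim C/(y^{2/3}+8)\to 0$), the case $\lambda_D>\lambda_F=0$ is unavailable too. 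Your parenthetical fix---revisiting the characteristic argument and using that $|\psi(y_0;s)|\gtrsim |y_0|e^{c(s-s_0)}$ so the term decays like $|y_0|^{-2/3}e^{-c'(s-s_0)}$ along trajectories---is correct and does close the argument, but it is genuinely more than a direct invocation of Lemma~\ref{rmk2}; note also that \eqref{U_far} alone gives only $\mathcal{U}\,\mathrm{sgn}(y)\ge 1/8$, not the linear-in-$y$ lower bound needed for exponential expansion, so you must re-derive $\mathcal{U}\,\mathrm{sgn}(y)\ge c|y|$ from $|U|\le|y|$ and \eqref{d-t-e-p}. The paper's choice to work with $\mu$ rather than $\tilde W$ sidesteps all of this. (Incidentally, the paper \emph{does} work with $\nu=(y^{2/3}+8)\tilde U_y$ in the very next lemma, Lemma~\ref{mainprop_1-p}, where it treats the $\tilde U\,\overline U''$ contribution as a nonlocal kernel and runs a direct maximum-principle argument rather than Lemma~\ref{rmk2}.)
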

\begin{proof}
	We first note that 
	\begin{equation*}
		\begin{split}
			\limsup_{|y|\rightarrow \infty}|(y^{\frac{2}{3}}+8)(U_y-\overline{U}')|&\leq \limsup_{|y|\rightarrow \infty}|(y^{2/3}+8)U_y|+\lim_{|y|\rightarrow \infty}|(y^{\frac{2}{3}}+8)\overline{U}'|
			\\
			&= \limsup_{|y|\rightarrow \infty}|(y^{2/3}+8)U_y|+\frac{1}{3}. 
		\end{split}
	\end{equation*}
	Thus, it is enough to show that
	\begin{equation*}\label{3.00}
		\limsup_{|y|\rightarrow \infty} |(y^{2/3}+8)U_y(y,s)|<\frac{44}{75}.
	\end{equation*}
	Letting $\mu(y,s):=(y^{2/3}+8)U_y(y,s)$, we have from \eqref{EP2_2D1-p} that
	\begin{equation*}
		\partial_s \mu+D^{\mu}\mu+\mathcal{U}\mu_y=F^{\mu}, 
	\end{equation*}
	where $\mathcal{U}$ is defined in \eqref{U-p}, and
	\begin{subequations}
		\begin{align*}
			D^{\mu} &:= 1+U_y-\frac{2y^{2/3}}{3(y^{2/3}+8)}\left(\frac{3}{2}+\frac{U}{y}\right),
			\\
			F^{\mu} &:= -\frac{e^s \Phi_{yy}}{1-\dot{\tau}}(y^{2/3}+8)+\left(-\frac{\dot{\tau}U_y}{1-\dot{\tau}}+\frac{2}{3}\frac{y^{2/3}}{y^{2/3}+8}\left(\frac{\dot{\tau}}{1-\dot{\tau}}\frac{U}{y}+\frac{1}{y}\frac{e^{s/2}(\kappa-\dot{\xi})}{1-\dot{\tau}}\right)\right)\mu.
		\end{align*}
	\end{subequations}
	Using \eqref{Utildey_M-p} and the fundamental theorem of calculus with $(U-\overline{U})(0,s)=0$, we have
	\begin{equation*}
		\begin{split}
			D^{\mu}(y,s)&=1+(U_y-\overline{U}')+\overline{U}'-\frac{2y^{2/3}}{3(y^{2/3}+8)}\left(\frac{3}{2}+\frac{\overline{U}}{y}+\frac{U-\overline{U}}{y}\right)
			\\
			&\geq 1-\frac{1}{y^{2/3}+8}+\overline{U}'-\frac{2y^{2/3}}{3(y^{2/3}+8)}\left(\frac{3}{2}+\frac{\overline U}{y}+\frac{1}{y}\int_{0}^{y}{\frac{dy'}{{y'}^{2/3}+8}}\right). 
		\end{split}
	\end{equation*}
	Thanks to \eqref{0605_m_7} and $\overline{U}'<0$, we see that the lower bound of $D^\mu$ is strictly positive for $|y|\geq 3$, 
	i.e.,  \begin{equation}\label{Dmu}
	\inf_{|y|\geq 3}D^{\mu} (y,s) \geq \lambda^\mu_D >0
	\end{equation} 
	for some $\lambda^\mu_D>0$. 
	From \eqref{Utildey_M-p} and \eqref{y-w-y}, we have
	\begin{equation}\label{mu}
		|\mu|\leq |(y^{2/3}+8)(U_y-\overline{U}')|+|(y^{2/3}+8)\overline{U}'|\leq C.
	\end{equation}
 Using \eqref{Uy1-p}, \eqref{tau_dec}, \eqref{d-t-e-p},    \eqref{temp_1-p} and \eqref{Phiyy_decay-p}, we can check that
\begin{equation}\label{Fmu}
	\begin{split}
			\|F^{\mu}(y,s)\|_{L^{\infty}(|y|\geq 3)} &\leq C(e^{-s}+e^{-s}\|\mu\|_{L^{\infty}})
			 \leq Ce^{-s},
	\end{split}
\end{equation}
where we have used \eqref{mu} for the last inequality. 
	 Now  applying  Lemma~\ref{rmk2} with \eqref{Dmu} and \eqref{Fmu}, we get
	\begin{equation*}
		\limsup_{|y|\rightarrow \infty}|\mu(y,s)|\leq \limsup_{|y|\rightarrow \infty}|\mu(y,s_0)|+C\veps \le \frac12+C\veps < \frac{44}{75}
	\end{equation*}
	for sufficiently small $\ve>0$. 
	Here, we used \eqref{4.3a2-p}, that is equivalent to 
	\begin{equation*}
		\limsup_{|y|\rightarrow \infty}|\mu(y,s_0)|\leq 1/2.
	\end{equation*}
This completes the proof. 
\end{proof}

We finally close \eqref{Utildey_M-p} in the following lemma.
\begin{lemma}\label{mainprop_1-p}
Under the  assumptions of Proposition~\ref{mainprop}, we have
	\begin{equation*}
		|(y^{2/3}+8)(U_y-\overline{U}')| \le \frac{24}{25}
	\end{equation*}
	for all $y\in\mathbb{R}$ and $s\in[s_0,\sigma_1]$.
\end{lemma}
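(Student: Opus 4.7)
The plan is to run the same weighted maximum-principle argument that established Lemma~\ref{propW} for $\wt{P}$, now applied to the weighted quantity
\[
\nu(y,s) := (y^{2/3}+8)\bigl(U_y(y,s)-\overline{U}'(y)\bigr) = (y^{2/3}+8)\wt{U}_y(y,s).
\]
Starting from the evolution equation \eqref{Eq_diff-p} for $\wt{U}_y$ and using $(y^{2/3}+8)\wt{U}_{yy} = \partial_y\nu - \frac{2}{3}\frac{y^{2/3}}{y^{2/3}+8}\frac{\nu}{y}$, I would derive a transport equation
\[
\partial_s\nu + D^{\nu}\nu + \mathcal{U}\partial_y\nu = F^{\nu},
\]
where $D^{\nu} = 1 + \frac{\wt{U}_y+2\overline{U}'}{1-\dot{\tau}} - \frac{2}{3}\frac{y^{2/3}}{y^{2/3}+8}\frac{\mathcal{U}}{y}$ and $F^{\nu} = -(y^{2/3}+8)\bigl[\frac{e^s\Phi_{yy}}{1-\dot{\tau}} + (\frac{\wt{U}+\dot{\tau}\overline{U}}{1-\dot{\tau}}+\frac{e^{s/2}(\kappa-\dot{\xi})}{1-\dot{\tau}})\overline{U}''+\frac{\dot{\tau}}{1-\dot{\tau}}(\overline{U}')^2\bigr]$. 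Importantly, $F^{\nu}$ does not depend on $\nu$ itself, which cleans up the algebra.

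Near the origin I would Taylor-expand. Using the identities $\wt{U}_y(0,s)=\wt{U}_{yy}(0,s)=0$ (which follow from \eqref{constraint-p} together with $\overline{U}(0)=\overline{U}''(0)=0$), combined with \eqref{str_U3-p} giving $|\partial_y^3 U(0,s)-6|\leq C\veps$ and the bound $\|\partial_y^4 U\|_{L^\infty}\leq M$ from \eqref{EP2_1D4-p}, I obtain $|\wt{U}_y(y,s)|\leq \frac{y^2}{2}C\veps + C|y|^3(M+\|\overline{U}^{(4)}\|_{L^\infty})$. Multiplying by $(y^{2/3}+8)\le 8+l^{2/3}$ and choosing $l=l(M)>0$ small enough, then $\veps$ even smaller, forces $|\nu(y,s)|\leq 1/2 < 24/25$ on $|y|\leq l$. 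This replaces the role played by \eqref{V-l-p} in Lemma~\ref{Wy1_lem_p}.

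For $|y|\geq 3$ I would produce a damping lower bound $D^{\nu}\geq 0$ by isolating the steady part $D^{\nu}_{-}(y) := 1+\overline{U}' - \frac{2y^{2/3}}{3(y^{2/3}+8)}(\tfrac{3}{2}+\frac{\overline{U}}{y})-\frac{1}{y^{2/3}+8}-\frac{2y^{2/3}}{3(y^{2/3}+8)}\frac{1}{y}\int_0^y\frac{dy'}{{y'}^{2/3}+8}$ exactly as in \eqref{W_D}, and absorbing the remaining $O(\veps)$ perturbations through \eqref{Utildey_M-p}, \eqref{tau_dec}, \eqref{d-t-e-p}, and \eqref{temp_1-p}; the non-negativity of $D^\nu_-$ for $|y|\geq 3$ is precisely what Lemma~\ref{4.3} in the appendix (invoked via \eqref{0605_m_7}) will supply. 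For the forcing, I would bound $|F^{\nu}|\leq Ce^{-s}$ on $|y|\geq l$ using \eqref{Phiyy_decay-p} for the $\Phi_{yy}$-contribution, \eqref{tau_dec} and \eqref{temp_1-p} for the modulation pieces, and the decay estimates \eqref{y-w-y}, \eqref{Wyybar_bdd} for $\overline{U}$, $\overline{U}'$, $\overline{U}''$.

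With these three ingredients in hand, I would close the proof exactly as in Lemma~\ref{propW}: supposing for contradiction that $\|\nu(\cdot,s)\|_{L^\infty}$ reaches $24/25$ at some first time $s_2$, use the asymptotic estimate \eqref{Ut_dec_p} (which yields $\limsup_{|y|\to\infty}|\nu|<23/25$) together with the smooth decay of $\nu$ to select a maximizer $y_*(s)$; the near-origin bound forces $|y_*(s)|\geq 3$, where both $D^{\nu}\geq 0$ and $|F^{\nu}|\leq Ce^{-s}$ hold. Applying the Rademacher-style one-sided differentiation identity \eqref{AP_R1-p}--\eqref{wtP-thm-p} to $\|\nu(\cdot,s)\|_{L^\infty}$ and integrating then gives $\|\nu(\cdot,s)\|_{L^\infty}\leq \|\nu(\cdot,s_0)\|_{L^\infty} + C\veps \leq 22/25 + C\veps < 24/25$, where $22/25$ comes from the initial hypothesis \eqref{4.3a-p}; this contradicts the definition of $s_2$. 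The main obstacle I anticipate is verifying non-negativity of $D^{\nu}$ for intermediate $y$ (say $l\leq|y|\leq 3$), since $\overline{U}'$ is negative there and the weight $\tfrac{2}{3}\tfrac{y^{2/3}}{y^{2/3}+8}$ is not yet saturating $1$; this is why the local argument must be pushed to $|y|\leq 3$ rather than stopping at $|y|\leq l$, and why the precise numerical identities of Lemma~\ref{4.3} are essential.
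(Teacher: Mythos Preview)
Your derivation of the transport equation for $\nu$ has a critical error: the forcing $F^{\nu}$ contains the term $-(y^{2/3}+8)\tfrac{\wt{U}}{1-\dot{\tau}}\overline{U}''$, and this is \emph{not} $O(e^{-s})$. Since $\wt{U}(y,s)=\int_0^y \wt{U}_y(y',s)\,dy'$ and the bootstrap only gives $|\wt{U}_y|\le (y^{2/3}+8)^{-1}$, one has $|\wt{U}|\sim |y|^{1/3}$ for large $|y|$, so $(y^{2/3}+8)|\wt{U}||\overline{U}''|$ is $O(1)$ and in particular does not decay in $s$. Your claim that ``$F^{\nu}$ does not depend on $\nu$ itself'' is precisely where the argument breaks: $\wt{U}$ depends on $\nu$ through $\wt{U}=\int_0^y \nu(y',s)((y')^{2/3}+8)^{-1}\,dy'$, and this dependence cannot be absorbed as a small forcing.

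The paper's proof recognizes this and writes the offending term as a nonlocal kernel
\[
\int_{\mathbb{R}}\nu(y',s)K(y,s;y')\,dy',\qquad K(y,s;y')=-\frac{(y^{2/3}+8)\overline{U}''(y)}{1-\dot{\tau}}\,\mathbb{I}_{[0,y]}(y')\,\frac{1}{(y')^{2/3}+8}.
\]
Inequality \eqref{0605_m_7} is then used in its full strength $K_+(y)\le D_-(y)$ for $|y|\ge 3$, not merely to get $D_-\ge 0$ as you suggest: the kernel must be dominated by the damping so that at the maximizer $y_*(s)$ one has $\bigl|\int K\nu\,dy'\bigr|\le K_+(y_*)\|\nu\|_{L^\infty}\le D_-(y_*)\|\nu\|_{L^\infty}\le D(y_*,s)\nu(y_*,s)$ (when $\nu(y_*)>0$), and the combination contributes nonpositively. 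Relatedly, your $D^{\nu}_-$ is written with $\overline{U}'$ rather than $2\overline{U}'$; it appears to be copied from the $\wt{P}$-equation \eqref{W_D} and does not match your own $D^{\nu}$. Finally, your Taylor expansion near $y=0$ only covers $|y|\le l$ with $l=l(M)$ small; to force $|y_*(s)|\ge 3$ the paper instead invokes the other bootstrap bound \eqref{EP2_1D1-p}, which gives $|\nu|\le (3^{2/3}+8)\cdot\tfrac{9}{100}<\tfrac{23}{25}$ on all of $|y|\le 3$ directly.
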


\begin{proof}
Let $\nu (y,s) :=(y^{2/3}+8)\wt{U}_y(y,s)$, where $\wt{U}=U-\overline{U}$. 
From \eqref{EP2_2D1-p}, we see that $\nu$ satisfies  
\begin{equation}\label{nu_eq1-p}
	\partial_s \nu + D(y,s)\nu+\mathcal{U}(y,s)\nu_y = F_1(y,s) +F_2(y,s) +\int_{\mathbb{R}}{\nu(y',s) K(y,y') \,dy'},
\end{equation}
where $\mathcal{U}$ is defined in \eqref{U-p}, and
\begin{subequations}
	\begin{align*}
		D(y,s) &:= 1+\wt{U}_y+2\overline{U}'-\frac{2y^{2/3}}{3(y^{2/3}+8)}\left(\frac{3}{2}+\frac{\wt{U}+\overline{U}}{y}\right),
		\\
		F_1(y,s) &:= -\left(\frac{e^{s/2}(\kappa-\dot{\xi})}{1-\dot{\tau}}+\frac{\dot{\tau}\overline{U}}{1-\dot{\tau}}\right)(y^{2/3}+8)\overline{U}''-\frac{\dot{\tau}}{1-\dot{\tau}}(y^{2/3}+8)\overline{U}'^2,
		\\
		F_2(y,s) &:=-\frac{e^s\Phi_{yy}}{1-\dot{\tau}}(y^{2/3}+8) +\left(-\frac{\dot{\tau}(\wt{U}_y+2\overline{U}')}{1-\dot{\tau}}+\frac{2}{3}\frac{y^{2/3}}{y^{2/3}+8}\left(\frac{\dot{\tau}}{1-\dot{\tau}}\frac{U}{y}+\frac{1}{y}\frac{e^{s/2}(\kappa-\dot{\xi})}{1-\dot{\tau}}\right)\right)\nu,
		\\
		K(y,s;y') &:= -\frac{1}{1-\dot{\tau}}(y^{2/3}+8)\overline{U}''(y)\mathbb{I}_{[0,y]}(y')\frac{1}{(y')^{2/3}+8}.
	\end{align*}
\end{subequations}
It is straightforward to check by \eqref{Utildey_M-p} that 
\begin{equation*}
	\begin{split}
		D(y,s)
		&\geq 1-\frac{1}{y^{2/3}+8}+2\overline{U}'-\frac{2y^{2/3}}{3(y^{2/3}+8)}\left(\frac{3}{2}+\frac{\overline U}{y}+\frac{1}{y}\int_{0}^{y}{\frac{dy'}{{y'}^{2/3}+8}}\right) 
		=: D_{-}(y),
	\end{split}
\end{equation*}
and by \eqref{d-t-e-p} that
\begin{equation*}
	\int_{\mathbb{R}}{|K(y,s;y')|\,dy'}\leq(1+O(\veps))(y^{2/3}+8)|\overline{U}''|\int^{|y|}_{0}{\frac{dy'}{{y'}^{2/3}+8}}=: K_{+}(y).
\end{equation*}
Then, for sufficiently small $\varepsilon>0$,     we have  from \eqref{0605_m_7} that 
\begin{equation}\label{K+D-}
	K_{+}(y)\leq D_{-}(y) \qquad \text{for }|y|\geq 3, 
\end{equation}
which implies \begin{equation*}
	D(y,s)\geq  \int_{\mathbb{R}}|K(y,s;y')|\,dy' \qquad \text{for }|y|\geq 3.
\end{equation*}

Next, we will check that 
\begin{equation}\label{F-nu-p}
	\|F_1(\cdot,s)\|_{L^{\infty}(|y|\geq 3)} + \|F_2(\cdot,s)\|_{L^{\infty}(|y|\geq 3)} \leq Ce^{-s}.
\end{equation}
Thanks to \eqref{y-w-y} and \eqref{Wyybar_bdd}, we find that  $\overline{U}=O(|y|^{1/3})$, $\overline{U}'=O(|y|^{-2/3})$ and $\overline{U}''=O(|y|^{-5/3})$ as $|y|\rightarrow \infty$. 
Using \eqref{tau_dec} and \eqref{temp_1-p}, we have for $|y|\geq 3$ that
\begin{equation}\label{F1-p}
	\|F_1(\cdot,s)\|_{L^{\infty}(|y|\geq 3)}\leq Ce^{-s}.
\end{equation}
Similarly as \eqref{Fmu}, using $|\wt{U}_y|\leq 1$, we have
\begin{equation}\label{F2-p}
	\begin{split}
		\|F_2(y,s)\|_{L^{\infty}(|y|\geq 3)} &\leq C(e^{-s}+e^{-s}\|\nu\|_{L^{\infty}})
		\leq Ce^{-s},
	\end{split}
\end{equation}
where we have used \eqref{Utildey_M-p} for the last inequality. We see that \eqref{F1-p} and \eqref{F2-p} give \eqref{F-nu-p}.


In order to finish the proof, we claim that
\begin{equation}\label{claim_reg-p}
	\|\nu(\cdot,s)\|_{L^{\infty}}< \frac{24}{25}, \qquad s\in[s_0,\sigma_1].
\end{equation}
Suppose to the contrary that \eqref{claim_reg-p} fails.
First we note 
from 
\eqref{4.3a-p} that 
\begin{equation*}
	\|\nu(\cdot,s_0)\|_{L^{\infty}}\leq \frac{22}{25}.
\end{equation*}
 Then, thanks to the fact that
  $\nu \in C([s_0, \sigma_1]; L^\infty(\mathbb{R}))$,  $s_2:= \min \{ s \in [s_0, \sigma_1]: \|\nu(\cdot,s)\|_{L^{\infty}} = 24/25 \}$ is well-defined, and there exists $s_1\in (s_0, s_2)$  such that
\begin{equation}\label{34-p}
	\frac{23}{25} = \|\nu(\cdot,s_1)\|_{L^{\infty}}  \le \|\nu(\cdot,s)\|_{L^{\infty}} < \frac{24}{25} \qquad \text{for }s\in (s_1,s_2). 
\end{equation}
Then, for each $s\in[s_1,s_2]$, thanks to the smoothness of $\nu$ and its decay property \eqref{Ut_dec_p},  we see that  $C^{\nu}:=\{ y \in \mathbb{R} :  \partial_y \nu(y,s) =0 \}$  is a non-empty closed set of the critical points of $\nu$,  and  that $\| \nu(\cdot, s) \|_{L^\infty} = \max\{ | \nu(y, s) | : y \in C^{\nu}\}.$ Thus, for each $s \in [s_1,s_2]$,  one can choose a point $y_*(s)\in C^\nu$ such that 
\begin{equation}\label{claim2_nudec0-p}
	\|\nu(\cdot, s) \|_{L^{\infty}} = |\nu(y_*(s), s)|.
\end{equation}
In view of \eqref{34-p},  we see that $\nu(y_*(s), s) \neq 0$ for all $s \in [s_1,s_2]$. This implies that we can apply the same method to \eqref{AP_R1-p}. This helps to define that $\nu$ is differentiable with $y$. Moreover, due to \eqref{claim2_nudec0-p}, $ \partial_y \nu(y_\ast(s),s) =0 $. 
 We also note that 
 \begin{equation*}
 	\|\nu(\cdot,s)\|_{L^{\infty}(|y|\leq 3)}< \frac{23}{25},
 \end{equation*}
thanks to \eqref{EP2_1D1-p}. 
This together with \eqref{34-p} implies that   
\begin{equation}\label{y*'}
	|y_*(s)|\geq 3.
\end{equation} 

If $\nu(y_*(s),s)\geq 0$, then we have by \eqref{K+D-} and \eqref{y*'} that
\begin{equation}
	\begin{split}\label{nu_DK-p}
		D^\nu(y_*(s),s)\nu(y_*(s),s)&\geq D_{-}(y_*(s))\|\nu(\cdot,s)\|_{L^{\infty}}
		\\
		&\geq K_{+}(y_*(s))\|\nu(\cdot,s)\|_{L^{\infty}}\geq \left|\int_{\mathbb{R}}K^\nu(y_*(s),s;y')\nu(y',s)\,dy'\right|.
	\end{split}
\end{equation}
Similarly, if  $\nu(y_*(s),s)\leq 0$, we have 
\begin{equation}
	\begin{split}\label{nu_DK2-p}
		D^\nu(y_*(s),s)\nu(y_*(s),s)&\le - D_{-}(y_*(s))\|\nu(\cdot,s)\|_{L^{\infty}}
		\\
		&\le - K_{+}(y_*(s))\|\nu(\cdot,s)\|_{L^{\infty}} \le - \left|\int_{\mathbb{R}}K^\nu(y_*(s),s;y')\nu(y',s)\,dy'\right|.
	\end{split}
\end{equation}
Using  \eqref{F-nu-p}, \eqref{nu_DK-p}, \eqref{nu_DK2-p} and the property $\partial_y \nu(y_*(s), s) =0$ which is due to the fact that $y_*(s)\in C^{\nu}$, we have from \eqref{nu_eq1-p} that if   $\nu(y_*(s),s)> 0$, 
\begin{equation}\label{nu_temp-p}
	\begin{split}
		\partial_s \nu(y_*(s) ,s) & \leq  Ce^{-s}+\int_{\mathbb{R}} \nu(y',s)K^\nu(y_*,s;y')\,dy'-D^\nu(y_*,s)\nu
		\leq Ce^{-s},
	\end{split}
\end{equation}
and symmetrically that if $\nu(y_*(s),s)< 0$, 
\begin{equation}\label{nu_temp--p}
	\begin{split}
		\partial_s \nu(y_*(s) ,s) & \ge - Ce^{-s}.
	\end{split}
\end{equation}

For fixed $s$, by the definition of $y_*$, it holds  that \[ \|\nu(\cdot,s-h)\|_{L^\infty} \geq |\nu(y_*(s) - h \mathcal{U} (y_*(s),s), s - h)| 
\] 
for any small $h>0$. 
Then, it is straightforward to check that 
\begin{equation}\label{AP_R1} 
\begin{split}
\lim_{h \to 0^+} \frac{\|\nu(\cdot,s-h)\|_{L^\infty} - \|\nu(\cdot,s)\|_{L^\infty}}{-h} 
& \leq (\partial_s + \mathcal{U} (y_*(s),s)\partial_y)|\nu|(y,s)|_{y=y_*(s)}, 
\end{split}
\end{equation} 
provided that the limit on the LHS of \eqref{AP_R1} exists. Note that by Rademacher's theorem, 
$ \| \nu(\cdot, s)\|_{L^{\infty}}$, being Lipschitz continuous in $s$, is differentiable at almost all $s\in[s_1, s_2]$. Thus, since  $\partial_y \nu(y_*(s), s) =0$, we deduce from \eqref{AP_R1} that 
\begin{equation} \label{L-thm-p}
\begin{split}
\frac{d}{ds} \| \nu(\cdot, s)\|_{L^{\infty}} 
& \leq  \left\{ \begin{array}{l l}
\partial_s \nu(y, s)|_{y=y_*(s)} & \text{if } \nu(y_*(s),s)>0, \\
-\partial_s \nu(y, s)|_{y=y_*(s)} & \text{if } \nu(y_*(s),s)<0
\end{array}
\right.  
\end{split}
\end{equation} 
for almost all $s\in[s_1, s_2]$. 

%
%
%
%
%
%

Thus, using \eqref{L-thm-p} with  \eqref{nu_temp-p} and \eqref{nu_temp--p}, we have 
\begin{equation*}
\begin{split} 
 \| \nu(\cdot, s_2) \|_{L^{\infty}} 
&=  \| \nu(\cdot, s_1) \|_{L^{\infty}} +  \int_{s_1}^{s_2} \frac{d}{ds} \| \nu(\cdot, s)\|_{L^{\infty}} \; ds  \\ 
& \le \| \nu(\cdot, s_1) \|_{L^{\infty}} + \int_{s_1}^{s_2} C e^{-s} \; ds \\
&  \leq  \| \nu(\cdot, s_1) \|_{L^{\infty}} + C \veps.
\end{split} 
\end{equation*}
This together with \eqref{34-p} leads to a contradiction  for sufficiently small $\ve>0$,  which proves \eqref{claim_reg-p}, in turn Lemma~\ref{mainprop_1-p}. 
\end{proof}

%

\section{Pressureless Euler system} \label{PE-blow-up}
In this section,  we present a similar blow-up result for the pressureless Euler system: 
\begin{subequations}\label{PE}
	\begin{align}
		& n_t +  (n v)_x = 0, \label{PE_1} 
		\\ 
		& v_t  + v v_x = 0, \quad x\in \mathbb{R}, t\ge-1. 
	\end{align}
\end{subequations} 
We consider the initial value problem for \eqref{PE} with the initial data $(n,v)(x,-1) = (n_0, v_0)(x)$.
		\begin{theorem}\label{Thm-PE}
	Suppose that  $(n_0-1, v_0)\in  C^3(\mathbb{R})\times C^4(\mathbb{R}) $ and   that $\dd_x v_0(x)$ has a non-degenerate global minimum at some point $x_0\in\mathbb{R}$ with $\dd_x v_0(x_0)<0$.  
	Then, there is a unique smooth solution $(n,u)\in ( C^3\times C^4)  \left([-1,T_\ast) \times \mathbb{R}\right)$ to \eqref{PE} whose the maximal existence time $T_\ast = |\inf_x \partial_x v_0(x)|^{-1}$ is finite.  Furthermore, it holds that
	\begin{enumerate}[(i)]
		\item $\sup_{t<T_\ast} \left[ v(\cdot, t) \right]_{C^{\beta}}<\infty \quad \text{for}\quad \beta\leq 1/3$;
		\item for $\beta>1/3$, $ \lim_{t\rightarrow T_\ast} \left[ v (\cdot, t) \right]_{C^{\beta}}=\infty$
		with the temporal blow-up rate  
		\ \begin{equation*} \left[ v(\cdot, t) \right]_{C^\beta}\sim (T_\ast-t)^{-\frac{3\beta-1}{2}}
		\end{equation*} for all $t$ sufficiently close to $T_\ast$;
		\item 
the density blows up as 
		\begin{equation*}
			n(x,t)  \sim \frac{1}{(T_\ast-t) + (x-x_*)^{2/3}}
		\end{equation*} 
		for all $(x,t)$ sufficiently close to $(x_*,T_\ast)$, where $x_\ast$ is the location of blow-up. 
	\end{enumerate} 
\end{theorem}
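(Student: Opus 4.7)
The plan is to exploit the decisive observation that in \eqref{PE}, the velocity equation decouples from the density and reduces to the inviscid Burgers equation \eqref{1D-Burgers} exactly. Consequently, the entire self-similar bootstrap machinery of Sections~\ref{st-est}--\ref{sec3-boot} can be bypassed in favor of direct computations with the Burgers characteristics $X(t;\tilde x)=\tilde x+(t+1)v_0(\tilde x)$ and their Jacobian $J(t;\tilde x)=1+(t+1)v_0'(\tilde x)$, together with the constancy $v(X(t;\tilde x),t)=v_0(\tilde x)$. Classical theory yields the lifespan $T_\ast$, the first crossing point $x_0=\mathrm{argmin}\,\partial_xv_0$, and the blow-up location $x_\ast:=X(T_\ast;x_0)$, with $J$ vanishing at $(T_\ast,x_0)$.

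\textbf{Parts (i) and (ii).} After re-centering via $\xi(t):=X(t;x_0)$ and introducing the Burgers self-similar variables
\begin{equation*}
y=\frac{x-\xi(t)}{(T_\ast-t)^{3/2}},\quad s=-\log(T_\ast-t),\quad v(x,t)=e^{-s/2}U(y,s)+v_0(x_0),
\end{equation*}
$U$ satisfies the unforced equation \eqref{W-eq-0-1}. The non-degenerate minimum condition on $\partial_xv_0$, together with the space-amplitude normalization of Appendix~\ref{scaling-IC}, matches the Cauchy data to the stable profile $\overline{U}$ of \eqref{Burgers_SS'}; since no forcing is present, $U(\cdot,s)\to\overline{U}$ in the relevant weighted norms as $s\to\infty$, either by a stripped-down version of the bootstrap of Section~\ref{sec3-boot} or directly from the explicit characteristic formula. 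Equipped with the decay $|U_y(y,s)|\lesssim(1+y^2)^{-1/3}$, Steps~2 and~3 of Subsection~\ref{thm-pf} then transfer verbatim to yield (i), (ii), and the rate $(T_\ast-t)^{-(3\beta-1)/2}$.

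\textbf{Part (iii).} Integrating the continuity equation \eqref{PE_1} along characteristics gives the exact identity $n(X(t;\tilde x),t)=n_0(\tilde x)/J(t;\tilde x)$, so the density profile near $(x_\ast,T_\ast)$ is determined by a Taylor expansion at $\tilde x=x_0$. Using $v_0''(x_0)=0$ and $v_0'''(x_0)>0$ from the non-degenerate minimum, one obtains
\begin{equation*}
J(t;\tilde x)\sim (T_\ast-t)+c_1(\tilde x-x_0)^2,\qquad X(t;\tilde x)-\xi(t)\sim c_2(\tilde x-x_0)(T_\ast-t)+c_3(\tilde x-x_0)^3,
\end{equation*}
uniformly on a small neighborhood of $(x_\ast,T_\ast)$, with positive constants $c_i$. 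Inverting the second relation for $(\tilde x-x_0)^2$ in the two regimes $(\tilde x-x_0)^2\lesssim T_\ast-t$ and $(\tilde x-x_0)^2\gtrsim T_\ast-t$ gives $J(t;\tilde x)\sim (T_\ast-t)+(x-x_\ast)^{2/3}$ in either case, and the positivity $n_0(x_0)>0$ then yields (iii). The only mild point of care is the uniform matching of the two regimes, which is routine since both expressions agree up to constants on their overlap $(\tilde x-x_0)^2\sim T_\ast-t$; beyond this, no genuine obstacle arises, and the proof is substantially simpler than that of Theorem~\ref{mainthm1} precisely because the Poisson coupling is absent.
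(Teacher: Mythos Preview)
Your proposal is correct and, for part~(iii), takes a genuinely different and more elementary route than the paper. The paper's outline in Section~\ref{PE-blow-up} rewrites both unknowns in self-similar variables \eqref{PE-new-var} and then invokes a stripped-down rerun of the bootstrap machinery of Sections~\ref{st-est}--\ref{sec3-boot} to obtain the weighted bound $(y^{2/3}+1)N(y,s)\sim 1$, from which the density profile is read off exactly as in the Euler--Poisson case. You instead exploit the exact characteristic representation $n(X(t;\tilde x),t)=n_0(\tilde x)/J(t;\tilde x)$ together with the Taylor expansions of $J$ and $X-\xi$ at $\tilde x=x_0$, and recover the profile by a direct two-regime inversion. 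This buys you a proof of~(iii) that requires neither the self-similar change of variables for the density nor any bootstrap, and---unlike the paper's outline, which tacitly imposes the global localization condition \eqref{asymp-cond} as part of its ``without loss of generality'' reduction---your argument uses only the local non-degeneracy hypothesis actually stated in the theorem. For parts~(i) and~(ii) your approach coincides with the paper's: both reduce to the decay $|U_y|\lesssim(1+y^2)^{-1/3}$ and then repeat Steps~2--3 of Subsection~\ref{thm-pf}; your remark that this decay can alternatively be read off the explicit formula $v_x=v_0'/J$ is a useful shortcut the paper does not mention. One small point to make explicit in your write-up: the global $C^{1/3}$ bound in~(i) requires matching the self-similar region near $x_\ast$ with the region where $J$ stays uniformly positive and $v_x$ remains bounded; this is routine but should be stated.
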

\subsection{Outline of the proof}
Since the proof of Theorem~\ref{Thm-PE} is simpler than that of Theorem~\ref{mainthm1}, we briefly outline the proof here. 
By a standard characteristic method, one can show that 
\[ \inf_{x\in\mathbb{R} }  \partial_x v (x,t) \searrow -\infty, \quad \sup_{x\in \mathbb{R}}  n(x,t) \nearrow \infty \quad \text{ as } t\nearrow T_\ast. \] 
By this together with   the scaling invariances of the Burgers equation, 
we can  assume without loss of generality that   
\begin{equation}
		\inf_{x\in \mathbb{R}} \partial_x v_0(x) = \partial_x v_0(0)=-1, \quad  \partial_x^2 v_0(0) =0, \quad \partial_x^3 v_0(0) =6 >0, 
		\end{equation} 
		and  we can further assume  that  $\partial_x v_0$ and $\partial_x n_0$  satisfy
\begin{equation} \label{asymp-cond}
	\left| \partial_x v_0 (x)-\overline{U}' ( {x} )\right|\leq \min\left\{\frac{ {x}^2}{40\left(1+ {x}^2\right)}, \frac{22}{25\left(8+ {x}^{2/3}\right)}\right\}. 
\end{equation}

By introducing the self-similar variables, 
\begin{equation*}\label{change_var-pe}
	y\left(x,t\right) := \frac{x}{\left( -t\right)^{3/2}}, \quad s(t) :=-\log\left(  - t \right),
\end{equation*}		
and new unknowns $N$ and $V$ as 
\[ n(x,t) = e^s N(y,s), \quad v(x,t) = e^{-s/2} V(y,s),  \] 
we see that $N$ and $V$  satisfy the equations 
\begin{subequations}\label{PE-new-var}
	\begin{align}
		& \partial_s N + \left(1 + {V_y}\right)N + \mathcal{V} N_y = - {e^{-s}V_y} , \label{PE_1-p} 
		\\ 
		& \partial_s V - \frac{1}{2}V + \mathcal{V} V_y =  0, \label{PE_2-p} 
	\end{align}
\end{subequations}
where 
\begin{equation}\label{V-ep}
	\mathcal{V}:=  {V}  + \frac{3}{2}y.
\end{equation}

%
Loosely following the analysis carried out for \eqref{EP-p} through the previous sections, with no modulations nor the terms arising from the Poisson equation, we can establish  the global uniform estimates: 
\[ | V_y (y,s) |\lesssim y^{-2/3}, \quad (y^{2/3} +1) N(y,s) \sim 1  \quad \text{ for all } y \in \mathbb{R}, s\ge0.\] 
These estimates imply that 
 $[v(\cdot, t) ]_{C^{\beta}(\mathbb{R}) } \lesssim 1$ if $\beta\le1/3$. 
 Furthermore, if $\beta>1/3$, it holds that $[v(\cdot, t) ]_{C^{\beta}(\mathbb{R}) } \to \infty$ as $t\nearrow0$ and 
\begin{equation*}
		 | n(x,t) |\sim ( x^{2/3} -t )^{-1} . 
	\end{equation*}  

\section{Appendix}

\subsection{Rescaling initial data}\label{scaling-IC} 
In this subsection, we discuss how the initial condition \eqref{init_w_3-p} can be relaxed. 
Thanks to the time and space translation invariances, one can assume that 
the initial time $t=-\ve$ and $-\partial_x u(-\ve, x)$ attains its local maximum at $x=0$. 
Furthermore, using the scaled variables, $t'= t$, $x'=\mu x$,   and new unknown functions  
\begin{equation*}
	\rho(t,x)=\frac{1}{\mu}\wt{\rho}(t',x'), \quad u(t,x)=\frac{1}{\mu}\wt{u}(t',x'),  \quad \phi(t,x)=\wt{\phi}(t',x'), 
\end{equation*}
we see from \eqref{EP-p}  that 
\begin{equation}\label{wt_eq}
	\begin{split}
		&\wt{\rho}_{t'}+(\wt{\rho}\wt{u})_{x'}=0,
		\\
		&\wt{u}_{t'}+\wt{u}\wt{u}_{x'}=-\mu^2 \wt{\phi}_{x'},
		\\
		&-\mu^2\wt{\phi}_{x'x'}=\mu^{-1}\wt{\rho}-e^{\wt{\phi}}.
	\end{split}
\end{equation}
By choosing $\mu$ appropriately, one can make $\partial_x u_0(0)=\partial_{x'}\wt{u}_0(0)$ and  $\partial_{x'}^3\wt{u}_0(0)= \mu^2\partial_x^3u_0(0)= 6\veps^{-4}$ as desired.  
We note that the coefficient $\mu$  appears in  the system \eqref{wt_eq}, and a similar analysis can be carried out as long as  $\mu$ is independent of $\veps$. 

%

\subsection{Useful inequalities and their  verification}	\label{inequalities}	

In this subsection,  we give a set of   the inequalities that are used in the course of our analysis. 

\begin{lemma}[Bound of $\overline{U}'$ for small $|y|$] It holds that for all $y\in\mathbb{R}$,
	\begin{equation} \label{y-w-y2}
		-\frac{1}{1+\frac{3y^2}{(3y^2+1)^{2/3}}}\leq  \overline{U}' (y) \leq 0.
	\end{equation}
\end{lemma}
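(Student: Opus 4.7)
The plan is to derive both bounds directly from the algebraic equation \eqref{Burgers_SS'}, i.e., $y+\overline{U}+\overline{U}^3=0$, by implicit differentiation. Differentiating in $y$ gives $1+\overline{U}'+3\overline{U}^2\,\overline{U}'=0$, so
\begin{equation*}
\overline{U}'(y)=-\frac{1}{1+3\overline{U}(y)^2}.
\end{equation*}
This formula already shows $\overline{U}'(y)\le 0$ with strict inequality, yielding the upper bound in \eqref{y-w-y2}.

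For the lower bound, since $1+3\overline{U}^2>0$, it suffices to prove
\begin{equation*}
1+3\overline{U}(y)^2\ \ge\ 1+\frac{3y^2}{(3y^2+1)^{2/3}},
\end{equation*}
equivalently $\overline{U}^2(3y^2+1)^{2/3}\ge y^2$. The key algebraic identity is that \eqref{Burgers_SS'} gives $y=-\overline{U}(1+\overline{U}^2)$, and hence $y^2=\overline{U}^2(1+\overline{U}^2)^2$. Substituting, the desired inequality reduces (after dividing by $\overline{U}^2$ when $\overline{U}\ne 0$, and is trivial when $\overline{U}=0$) to
\begin{equation*}
(3y^2+1)^{2/3}\ \ge\ (1+\overline{U}^2)^2,
\end{equation*}
i.e., $3y^2+1\ge(1+\overline{U}^2)^3$.

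The final step is a direct expansion: using $y^2=\overline{U}^2(1+\overline{U}^2)^2$,
\begin{equation*}
3y^2+1-(1+\overline{U}^2)^3
= 1+3\overline{U}^2(1+\overline{U}^2)^2-(1+3\overline{U}^2+3\overline{U}^4+\overline{U}^6)
= 3\overline{U}^4+2\overline{U}^6 \ \ge\ 0,
\end{equation*}
which closes the argument. There is essentially no obstacle here; the only subtlety is recognizing that one should eliminate $y$ in favor of $\overline{U}$ via the cubic relation to reduce the inequality to a manifestly nonnegative polynomial in $\overline{U}^2$.
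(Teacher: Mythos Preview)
Your proof is correct and self-contained. The paper does not actually supply its own proof of this lemma; it merely cites Subsection~6.2 of the Appendix in \cite{BKK}. Your argument---implicit differentiation of the defining cubic $y+\overline{U}+\overline{U}^3=0$ to get $\overline{U}'=-\frac{1}{1+3\overline{U}^2}$, followed by eliminating $y$ via $y^2=\overline{U}^2(1+\overline{U}^2)^2$ and reducing the lower bound to the nonnegativity of $3\overline{U}^4+2\overline{U}^6$---is the natural route and almost certainly matches what \cite{BKK} does, since there is essentially only one way to manipulate this algebraic relation.
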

The proof is given in Subsection 6.2 of the Appendix in \cite{BKK}. 

\begin{lemma}[Decay rate for $\overline{U}'$ and $\overline{U}''$] There exists a constant $C>0$ such that for all $y\in\mathbb{R}$ the following hold:
	\begin{subequations}
		\begin{align}
			&|\overline{U}'(y)|\leq  C(1+y^2)^{-1/3}, \label{y-w-y} \\
			& |\overline{U}''(y)|\leq C(1+y^2)^{-5/6}. \label{Wyybar_bdd}
		\end{align}
	\end{subequations}
\end{lemma}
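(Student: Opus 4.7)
\medskip

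My plan is to exploit the explicit algebraic relation \eqref{Burgers_SS'} (with $c=1$), namely
\[
y + \overline{U}(y) + \overline{U}(y)^3 = 0,
\]
to derive closed-form expressions for $\overline{U}'$ and $\overline{U}''$ in terms of $\overline{U}$, and then reduce both inequalities to elementary algebraic estimates in the single variable $w := \overline{U}^2 \ge 0$.

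First, I would differentiate the algebraic relation once to obtain
\[
\overline{U}'(y) \;=\; -\frac{1}{1+3\overline{U}(y)^2},
\]
and differentiating a second time yields
\[
\overline{U}''(y) \;=\; \frac{6\,\overline{U}(y)}{\bigl(1+3\overline{U}(y)^2\bigr)^{3}}.
\]
Also, squaring the algebraic relation gives the key identity
\[
y^2 \;=\; \overline{U}^2\bigl(1+\overline{U}^2\bigr)^2 \;=\; w(1+w)^2,
\]
which converts any bound in $y$ into a bound in $w$.

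Next, for the first estimate \eqref{y-w-y}, it suffices to show
\[
\frac{1}{1+3w} \;\le\; C\bigl(1+w(1+w)^2\bigr)^{-1/3} \qquad \text{for all } w\ge 0,
\]
or equivalently that the function $g_1(w) := (1+w(1+w)^2)^{1/3}/(1+3w)$ is bounded on $[0,\infty)$. Since $g_1$ is continuous on $[0,\infty)$ with $g_1(0)=1$ and $g_1(w) \to 1/3$ as $w\to\infty$ (the leading behavior being $w/(3w)$), boundedness follows immediately. For the second estimate \eqref{Wyybar_bdd}, I would bound
\[
|\overline{U}''(y)| \;=\; \frac{6\sqrt{w}}{(1+3w)^{3}},
\]
and verify that $g_2(w) := 6\sqrt{w}\bigl(1+w(1+w)^2\bigr)^{5/6}/(1+3w)^{3}$ is bounded on $[0,\infty)$; here $g_2(0)=0$ and as $w\to\infty$ the numerator behaves like $6w^{1/2}\cdot w^{5/2} = 6w^3$ while the denominator behaves like $27w^3$, so $g_2(w)\to 2/9$.

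There is no serious obstacle: the algebraic relation reduces everything to continuous functions on $[0,\infty)$ with finite limits at both endpoints. The only mildly subtle point is verifying the asymptotic leading behaviors correctly, i.e.\ that $w(1+w)^2 \sim w^3$ and $(1+3w)^3 \sim 27 w^3$ at infinity, so that the powers match and the ratios remain bounded; the exponents $1/3$ and $5/6$ in \eqref{y-w-y}--\eqref{Wyybar_bdd} are precisely the ones that make this matching work.
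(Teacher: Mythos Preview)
Your argument is correct. The paper itself does not supply a proof for this lemma; it simply refers to Lemma~6.1 in \cite{BKK}, so there is no in-text proof to compare against. That said, the route you take---differentiating the cubic relation $y+\overline{U}+\overline{U}^3=0$ to obtain closed-form expressions for $\overline{U}'$ and $\overline{U}''$, then substituting $y^2=w(1+w)^2$ with $w=\overline{U}^2$ and checking that the resulting ratios are continuous on $[0,\infty)$ with finite limits at both ends---is exactly the natural approach, and is almost certainly what the cited reference does as well.

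One harmless slip: a second differentiation actually gives
\[
\overline{U}''(y)\;=\;-\,\frac{6\,\overline{U}(y)}{\bigl(1+3\overline{U}(y)^2\bigr)^3},
\]
with a minus sign (consistent with $\overline{U}'''(0)=6>0$ since $\overline{U}(y)\approx -y$ near the origin). This does not affect your bound because you immediately pass to $|\overline{U}''|=6\sqrt{w}/(1+3w)^3$, which is the correct expression.
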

For the proof, we refer to the proof of Lemma 6.1 in \cite{BKK}. 
We also present some useful inequalities involving $\overline{U}$. 
\begin{lemma}\label{4.3}
	For some $\lambda>1$, it holds that for all $y\in\mathbb{R}$,
	\begin{subequations}
		\begin{align}
			&\frac{y^2}{5(1+y^2)}+\frac{16 y^2}{5(1+8y^2)} \leq 1+2\overline{U}'+\frac{2}{1+y^2}\left(\frac{3}{2}+\frac{\overline U}{y}\right),  \label{0605_m_3}
			\\
			&\lambda |\overline{U}''|  \frac{ 1+y^2}{y^2} \int^{|y|}_0{\frac{(y')^2}{1+(y')^2 }\,dy'}\leq\frac{3y^2}{1+8y^2}+\frac{y^2}{30(1+y^2)}. \label{0605_m_4}
		\end{align}
	\end{subequations} 
	 Furthermore, for $|y|\geq 3$, we have
	\begin{equation}
			\lambda |\overline{U}''| (y^{2/3}+8) \int^{|y|}_{0}{\frac{dy'}{{y'}^{2/3}+8}}\leq
			1-\frac{1}{y^{2/3}+8}+2\overline{U}'-\frac{2y^{2/3}}{3(y^{2/3}+8)}\left(\frac{3}{2}+\frac{\overline{U}}{y}+\frac{1}{y}\int^y_0 \frac{dy'}{y'^{2/3}+8}\right).\label{0605_m_7}
	\end{equation}
\end{lemma}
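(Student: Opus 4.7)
The central tool is the algebraic identity $y+\overline{U}(y)+\overline{U}(y)^3=0$ from \eqref{Burgers_SS'}, from which I get the explicit formulas $\overline{U}'(y)=-(1+3\overline{U}^2)^{-1}$ and $\overline{U}''(y)=-6\,\overline{U}(1+3\overline{U}^2)^{-3}$. Parametrizing by $w=\overline{U}(y)$ (a bijection $\mathbb{R}\to\mathbb{R}$), I have $y=-w(1+w^2)$, $y^2=w^2(1+w^2)^2$, and $\overline{U}/y=-1/(1+w^2)$. Since $\overline{U}$ is odd and every quantity appearing in the three inequalities is even in $y$, it suffices to work with $u:=w^2\geq0$, in which variable the problem becomes a purely algebraic verification.

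For \eqref{0605_m_3}, after the substitution the right-hand side becomes
\[
\frac{3u-1}{1+3u}\;+\;\frac{1+3u}{(1+u)\bigl(1+u(1+u)^2\bigr)},
\]
while the left-hand side is an explicit rational expression in $u$ via $y^2=u(1+u)^2$. Subtracting and clearing the manifestly positive denominators reduces the claim to showing that a specific polynomial $Q(u)$ is non-negative for $u\geq 0$. I would expand $Q$, match the leading behavior at $u=0$ (small-$y$, where both sides vanish to the same order) and as $u\to\infty$ (large-$y$, where both sides approach explicit constants), and conclude by exhibiting a sum-of-squares decomposition or an elementary factorization of $Q$.

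For \eqref{0605_m_4}, I use the closed form $\int_0^{|y|}\tfrac{(y')^2}{1+(y')^2}\,dy'=|y|-\arctan|y|$, together with $|\overline{U}''(y)|=6|w|/(1+3w^2)^3$, to reduce the inequality to a one-variable estimate in $w$. The ratio LHS/RHS (with $\lambda=1$) tends to $2/3$ as $y\to 0$ and to $0$ as $|y|\to\infty$, so the sharp threshold for $\lambda$ is set by $\sup_y(\mathrm{LHS}/\mathrm{RHS})$; the analysis indicates this supremum equals $2/3$ and is attained in the limit $y\to 0$, so any $\lambda\in(1,3/2)$ suffices. For \eqref{0605_m_7}, the integral $\int_0^{|y|}(y'^{2/3}+8)^{-1}dy'$ has no closed form, but on $|y|\geq 3$ it admits a sharp bound by splitting at $y'=3$ and using $\int_3^{|y|} y'^{-2/3}\,dy' = 3(|y|^{1/3}-3^{1/3})$. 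Combined with the decay $|\overline{U}''|\leq C(1+y^2)^{-5/6}$ from \eqref{Wyybar_bdd}, the LHS decays like $|y|^{-2/3}$ while the RHS tends to a strictly positive constant; the intermediate range $|y|\in[3,R]$ for some large $R$ is dispatched by direct verification in the $w$-variable.

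The main obstacle is the intermediate-$u$ regime for \eqref{0605_m_3}: neither the small- nor the large-$y$ asymptotics controls the inequality, and the polynomial $Q(u)$ that results from clearing denominators is of rather high degree. Producing an explicit non-negativity certificate — a sum-of-squares decomposition, or an equivalent monotonicity/convexity argument — is the central technical step; it is a finite, computer-algebra-friendly calculation but requires care. A single constant $\lambda>1$ valid for both \eqref{0605_m_4} and \eqref{0605_m_7} is then obtained by taking the minimum of the two individual binding thresholds, both of which exceed $1$ strictly.
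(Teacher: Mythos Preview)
The paper does not prove this lemma in its own text: it simply writes ``For the proofs, we refer to those of Lemma~6.2 and Lemma~6.3 in Appendix of \cite{BKK}.'' So there is no in-paper argument to compare against; the only meaningful comparison is to the method in \cite{BKK}, which (as far as one can infer) also proceeds by exploiting the algebraic relation $y+\overline{U}+\overline{U}^3=0$ to reduce to elementary one-variable inequalities. In that sense your plan is aligned with the intended approach: the parametrization $w=\overline{U}(y)$, $u=w^2$, $y^2=u(1+u)^2$, together with $\overline{U}'=-(1+3u)^{-1}$ and $\overline{U}/y=-(1+u)^{-1}$, is exactly the right device, and your rewriting of the right-hand side of \eqref{0605_m_3} as $\tfrac{3u-1}{1+3u}+\tfrac{1+3u}{(1+u)(1+u(1+u)^2)}$ is correct.

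That said, what you have is a plan, not a proof, and two of the steps you flag as ``to be done'' are precisely where the content lies. For \eqref{0605_m_3} you reduce to non-negativity of a polynomial $Q(u)$ on $[0,\infty)$ but do not exhibit it; this is the entire substance of the inequality and cannot be waved through as ``a finite, computer-algebra-friendly calculation''. For \eqref{0605_m_4} your asymptotic analysis has a small numerical slip (the limiting ratio as $y\to 0$ is $60/91$, not $2/3$, since the RHS behaves like $\tfrac{91}{30}y^2$), and more importantly the claim that the supremum of $\mathrm{LHS}/\mathrm{RHS}$ is attained in the limit $y\to 0$ does not follow from the endpoint behavior alone---you must rule out an interior maximum. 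For \eqref{0605_m_7} the large-$|y|$ regime is fine as you describe, but ``direct verification in the $w$-variable'' on the compact range $|y|\in[3,R]$ again hides the actual work. None of these are wrong approaches, but as written the proposal establishes the strategy without carrying out the decisive verifications.
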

For the  proofs, we refer to those of  Lemma~6.2 and Lemma~6.3 in Appendix of \cite{BKK}.

\subsection{Poisson equation}

\begin{lemma}\label{Lem_Pois}
	Let $f(y,s)$ be a continuous function such that
	\begin{equation}\label{09}
		\sup_{(y,s)\in\mathbb{R}\times [0,\infty)}\; y^{2/3} |f(y,s)| \leq C_f
	\end{equation}
	for some constant $C_f>0$. Suppose that $C_f>0$ is a constant such that 
	\begin{subequations}
		\begin{align}
			& C_f\sup_{y\in \mathbb{R}}I \leq \frac14, \label{B_CCon1} 
			\\ 
			& e^{1/4}C_f\left(\sup_{y\in \mathbb{R}}I\right)^2 \leq 2, \label{B_CCon2}
		\end{align}
	\end{subequations}
	  where $I(y)$ is defined in \eqref{Aeq}. Then, the solution to the Poisson equation
	\begin{equation}\label{B_Pois}
		- \partial_{yy}\Phi = f(y,s) + 1 - e^\Phi
	\end{equation}
	satisfies for   $n=0,1,2$,
	\begin{equation}\label{B_Pois2} 
		\sup_{(y,s)\in\mathbb{R}\times[0,\infty)}|(y^{2/3} + 1)\partial_y^n\Phi(y,s)| \leq C_f.
	\end{equation}
\end{lemma}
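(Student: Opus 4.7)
The plan is to reformulate \eqref{B_Pois} as a nonlinear integral equation for $\Phi$ using the Green's function of $-\partial_y^2 + 1$, and then to close a bootstrap on the weighted sup-norm of $\Phi$ using the smallness conditions \eqref{B_CCon1}--\eqref{B_CCon2}. Rewriting the Poisson equation as
\[
-\partial_y^2 \Phi + \Phi = f - N(\Phi), \qquad N(\Phi) := e^\Phi - 1 - \Phi \ge 0,
\]
and noting that $G(y) := \tfrac{1}{2} e^{-|y|}$ solves $-G_{yy} + G = \delta_0$, any solution decaying at infinity must satisfy
\[
\Phi(y) = (G*f)(y) - (G*N(\Phi))(y).
\]

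I would then bound both convolutions in the weight $(y^{2/3}+1)$. For the source, \eqref{09} and the definition \eqref{Aeq} of $I$ give directly
\[
(y^{2/3}+1)\,|(G*f)(y)| \le \tfrac{C_f}{2}\, I(y).
\]
For the nonlinear part, set $M := \|(y^{2/3}+1)\Phi(\cdot,s)\|_\infty$, so that $|\Phi(y')| \le M/(y'^{2/3}+1) \le M$. Using the elementary bound $N(x) \le \tfrac{1}{2} e^{|x|} x^2$ and $(y'^{2/3}+1)\ge|y'|^{2/3}$, I obtain $N(\Phi(y')) \le \tfrac{e^M M^2}{2|y'|^{2/3}}$, whence
\[
(y^{2/3}+1)\,|(G*N(\Phi))(y)| \le \tfrac{e^M M^2}{4}\, I(y).
\]
Combining and taking the supremum in $y$ yields the scalar inequality
\[
M \le \tfrac{C_f \sup I}{2} + \tfrac{e^M M^2 \sup I}{4}.
\]
Conditions \eqref{B_CCon1}--\eqref{B_CCon2} are calibrated so that, once a bootstrap seeds $M$ in the small regime (e.g.\ $M \le 1/4$, where $e^M \le e^{1/4}$), the scalar inequality admits only its small root and in particular traps $M \le C_f$. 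A continuity-in-$s$ argument, starting from the zero seed $\Phi^{(0)} \equiv 0$ and running the Picard iteration $\Phi^{(n+1)} = G*(f - N(\Phi^{(n)}))$, then closes the bootstrap and yields \eqref{B_Pois2} for $n=0$.

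The derivative bounds follow quickly. Since $|\partial_y G| = G$ pointwise, the same convolution estimate applied to $\partial_y \Phi = (\partial_y G)*(f - N(\Phi))$ gives the same weighted bound for $n=1$. For $n=2$, I read $\partial_y^2 \Phi$ off the equation \eqref{B_Pois} as $\partial_y^2 \Phi = (e^\Phi - 1) - f$; the first term is controlled via $|e^\Phi - 1| \le e^{|\Phi|}|\Phi|$ and the already-established bound on $\Phi$, while the second is handled using \eqref{09}. I expect the main obstacle to be the closure of the bootstrap: because the linear convolution alone produces a bound proportional to $\sup I$, the nonlinear correction must absorb the excess factor, and this is precisely what the smallness structure of $A$ in \eqref{Aeq}---and thus of the assumed $C_f$---is engineered to provide through \eqref{B_CCon1}--\eqref{B_CCon2}.
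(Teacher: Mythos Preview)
Your approach is essentially the paper's: both recast \eqref{B_Pois} as $\Phi = G*(f - N(\Phi))$ with $G=\tfrac12 e^{-|\cdot|}$, bound the weighted convolutions through $I(y)$, close the Picard iteration via the inductive estimate $\|(y^{2/3}+1)\Phi_n\|_\infty \le C_f\sup I$ using \eqref{B_CCon1}--\eqref{B_CCon2}, and then handle $\partial_y\Phi$ and $\partial_y^2\Phi$ by differentiating the integral representation (equivalently, reading $\partial_y^2\Phi$ off the equation). Your closing concern about the extra factor $\sup I$ is apt---the paper's induction likewise yields $M\le C_f\sup I$ rather than the stated $M\le C_f$---but this discrepancy is immaterial for the lemma's only application near \eqref{Phiyy_decay-p}, where any fixed constant suffices.
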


\begin{proof}
	
	We define a sequence of functions as follows:
	\[
	\Phi_{n+1}   = \frac{1}{2} \int_{-\infty}^\infty e^{-|y-y'|}\left(f - \left(e^{\Phi_{n}} - 1 - \Phi_{n}\right)\right)(y',s)\,dy' \quad \text{for } n \in \mathbb{N},
	\]
	where $\Phi_1$ is the solution to the linear inhomogeneous equation $(1 - \partial_{yy})\Phi_1 = f$. By induction, we claim that 
	\begin{equation}\label{allPhi_14}
		\|(y^{2/3}+1)\Phi_n(y,s)\|_{L^{\infty}}\leq C_f\sup_{y\in \mathbb{R}}I(y)  \quad \text{ for all } n\in\mathbb{N}.
	\end{equation} 
	For $n=1$, \eqref{allPhi_14} follows from 
	\begin{equation}\label{BPois1}
		\begin{split}
			|(y^{2/3}+1)\Phi_1|
			& = \left| \frac{(y^{2/3}+1)}{2}\int_{-\infty}^\infty e^{-|y-y'|} f(y',s) \,dy' \right| 
			 \leq  \frac{C_f}{2} I(y)
		\end{split}
	\end{equation}
	thanks to  \eqref{09}.  Assume that \eqref{allPhi_14} holds for $n=k\ge1$.  Then, using \eqref{B_CCon1}, \eqref{allPhi_14} and \eqref{BPois1}, we have
	\begin{equation}\label{BPoisn}
		\begin{split}
			(y^{2/3}+1)|\Phi_{k+1}(y,s)|
			& = (y^{2/3}+1)\left|\Phi_1(y,s)-\frac{1}{2}\int^{\infty}_{-\infty}e^{-|y-y'|}(e^{\Phi_k}-1-\Phi_k)\,dy' \right| 
			\\
			& \leq (y^{2/3}+1) \left(|\Phi_1| + \frac{1}{4} \int^{\infty}_{-\infty}e^{-|y-y'|}|\Phi_k|^2 e^{|\Phi_k|}\,dy' \right)   
			\\
			& \leq \frac{C_f}{2}\sup_{y\in \mathbb{R}}I  + \frac{e^{1/4}}{4}\big(C_f\sup_{y\in \mathbb{R}}I\big)^2 (y^{2/3}+1) \int^{\infty}_{-\infty}e^{-|y-y'|}(y'^{2/3}+1)^{-2}\,dy' 
			\\ 
			& \leq C_f\sup_{y\in \mathbb{R}}I \left( \frac{1}{2} + \frac{e^{1/4}}{4}C_f\big(\sup_{y\in \mathbb{R}}I\big)^2 \right),
		\end{split}
	\end{equation}
	where we have used
	\[
	|e^{\Phi_n} - 1 - \Phi_n| \leq  \frac{|\Phi_n|^2}{2}\sum_{j=2}^\infty \frac{|\Phi_n|^{j-2}}{j!/2} \leq \frac{|\Phi_n|^2}{2}e^{|\Phi_n|}
	\]
	and $|\Phi_n|\leq 1/4$ by \eqref{B_CCon1} and \eqref{allPhi_14} for the second inequality. Together with \eqref{B_CCon2}, \eqref{BPoisn} implies that \eqref{allPhi_14} holds true for $n=k+1$. 
	
	Now we show that $\{\Phi_n\}$ is Cauchy in $C_b(\mathbb{R}\times [0,\infty))$.  Since  $|\Phi_n| \leq 1/4$ by \eqref{B_CCon1} and \eqref{allPhi_14}, it holds that
	\begin{equation*}
		2(1-e^{-1/2}) e^{-1/4}\leq 2(1-e^{-1/2}) e^{\Phi_n} \leq \frac{1-e^{-(\Phi_n-\Phi_{n-1})}}{\Phi_n-\Phi_{n-1}}e^{\Phi_n}\leq  -2(1-e^{1/2})e^{\Phi_n} \leq  -2(1-e^{1/2})e^{1/4}
	\end{equation*}
	since $x \mapsto  (1-e^{-x})/{x}$ is a positive decreasing function on $\mathbb{R}$. Hence, we see that
	\begin{equation}\label{nu_n+11}
		\begin{split}
			\left| \frac{(e^{\Phi_n}-\Phi_n)-(e^{\Phi_{n-1}}-\Phi_{n-1})}{\Phi_n-\Phi_{n-1}}  \right|
			& = \left|1-e^{\Phi_n}\frac{1-e^{-(\Phi_n-\Phi_{n-1})}}{\Phi_n-\Phi_{n-1}} \right| 
			\\
			& \leq \max\left\{ 1-2e^{-1/4}(1-e^{-1/2}), -2e^{1/4}(1-e^{1/2})-1\right\} 
			\\
			& =:c_1 < 1.
		\end{split}
	\end{equation}
	Since $c_1<1$ and 
	\begin{equation}\label{nu_n+1}
		\begin{split}
			|\Phi_{n+1}-\Phi_n|
			& \leq \frac{1}{2}\int^{\infty}_{-\infty}e^{-|y-y'|}|(e^{\Phi_n}-\Phi_n)-(e^{\Phi_{n-1}}-\Phi_{n-1})|\,dy' 
			\\ 
			& \leq  c_1\sup_{(y,s)\in\mathbb{R}\times[0,\infty)}|\Phi_{n}-\Phi_{n-1}|,
		\end{split}
	\end{equation}  
	we see  that $\{\Phi_n\}$ be a Cauchy sequence in $C_b(\mathbb{R}\times[0,\infty))$. Therefore,  $\lim_{n\rightarrow \infty}\Phi_n=\Phi$ exists uniformly in $(y,s)$. Furthermore, using \eqref{nu_n+11}, we obtain that
	\begin{equation}\label{BPois2}
		\begin{split}
			\Phi 
			& =\lim_{n\rightarrow \infty}\Phi_{n+1}
			\\
			& =\lim_{n\rightarrow \infty} \frac{1}{2}\int^{\infty}_{-\infty}e^{-|y-y'|}( f- e^{\Phi_n}+1+\Phi_n)(s,y')\,dy' 
			\\
			&=\frac{1}{2}\int^{\infty}_{-\infty}e^{-|y-y'|}( f- e^{\Phi}+1+\Phi)(s,y')\,dy',
		\end{split}
	\end{equation}
	which means that $\Phi$ is the (unique) solution of the Poisson equation.
	
	By taking the limit of \eqref{allPhi_14}, we see that \eqref{B_Pois2} holds true for $n=0$. By taking the derivatives of \eqref{BPois2}, we get
	\begin{subequations}\label{B_Pois3}
		\begin{align}
			\partial_y \Phi  & =  \frac{1}{2}\int_{-\infty}^\infty -\frac{y-y'}{|y-y'|} e^{-|y-y'|}  (f - e^\Phi + 1 + \Phi)\,dy', 
			\\
			\partial_{yy} \Phi  & =  \frac{1}{2}\int_{-\infty}^\infty  e^{-|y-y'|}  (f - e^\Phi + 1 + \Phi)\,dy'  - (f - e^\Phi + 1  + \Phi).
		\end{align}
	\end{subequations} 
	By applying \eqref{B_Pois2} for $n=0$ to \eqref{B_Pois3}, it is straightforward to see  that \eqref{B_Pois2} holds for $n=1,2$. 
\end{proof}

\subsection{Maximum Principles}
We present a maximum principle,  which is a modified form of the one developed in  \cite{BSV}, to apply to our analysis.
We consider the initial value problem: 
\begin{equation}\label{IVP-f}
	\begin{split} 
		& \partial_s f(y,s)+D(y,s) f(y,s)+U(y,s)\partial_yf(y,s)=F(y,s)+\int_{\mathbb{R}}f(y',s)K(y,s;y')\,dy',
		\\
		& f(y,s_0) = f_0(y), \quad 
		s\in[s_0,\infty), \quad y\in \mathbb{R}. 
	\end{split} 
\end{equation}

	\begin{lemma}(\cite{BKK})
		\label{max_2}
		Let $f$ be a classical solution to IVP \eqref{IVP-f}. Let $\Omega \subseteq \mathbb{R}$ be any compact set. 
		Suppose that the following hold:  
		\begin{subequations}
			\begin{align}
				& \|f(\cdot,s)\|_{L^{\infty}(\Omega)}\leq m_0, \label{max_2_1}
				\\
				&  \|f(\cdot,s_0)\|_{L^{\infty}(\mathbb{R})}\leq m_0, \label{max_2_1'}
				\\
				& \int_{\mathbb{R}}|K(y,s;y')|\,dy'\leq \delta D(y,s)\quad \text{for}\quad (y,s)\in \Omega^c\times [s_0,\infty), \label{max_2_2}
				\\
				& \inf_{(y,s)\in \Omega^c\times [s_0,\infty)}D(y,s)\geq \lambda_D >0, \label{max_2_3}
				\\
				& \|F(\cdot,s)\|_{L^{\infty}(\Omega^c)}\leq F_0, \label{max_2_4}
				\\ 
				& \limsup_{|y|\rightarrow \infty}|f(y,s)| <  2m_0 \label{max_2_6}
			\end{align}
		\end{subequations} 
		for some $m_0, F_0, \lambda_D >0$ and $\delta<1$. If $m_0\lambda_D> F_0/(2-2\delta)$, 
		then $\|f(\cdot,s)\|_{L^{\infty}(\mathbb{R})}\leq 2m_0$.
	\end{lemma}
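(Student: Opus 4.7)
The plan is to argue by contradiction, reducing the problem to controlling the time derivative of $\|f(\cdot,s)\|_{L^{\infty}(\mathbb{R})}$ at a first contact time, exactly as carried out in Lemma~\ref{propW} and Lemma~\ref{mainprop_1-p}. Suppose for contradiction that $\|f(\cdot,s_1)\|_{L^{\infty}(\mathbb{R})}>2m_0$ for some $s_1>s_0$. Since $\|f(\cdot,s_0)\|_{L^\infty(\mathbb{R})}\leq m_0<2m_0$ by \eqref{max_2_1'} and $\|f(\cdot,s)\|_{L^{\infty}(\mathbb{R})}$ is continuous in $s$, there is a first time $s_\ast\in(s_0,s_1]$ at which the norm equals $2m_0$, and a slightly earlier time $s_\natural\in(s_0,s_\ast)$ with, say, $\|f(\cdot,s_\natural)\|_{L^\infty(\mathbb{R})}\geq\tfrac{3m_0+2m_0}{2}$, giving a strict lower bound on the norm on $[s_\natural,s_\ast]$ that exceeds $m_0$.

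The next step is to locate the maximizer. By \eqref{max_2_1}, $|f(y,s)|\le m_0$ on $\Omega$, so for each $s\in[s_\natural,s_\ast]$ the supremum on $\mathbb{R}$ is attained on $\Omega^{c}$; by \eqref{max_2_6}, the supremum is attained at some finite point $y_\ast(s)\in\Omega^c$. Hence $\partial_y f(y_\ast(s),s)=0$. Following the argument around \eqref{wtP-thm-p} (and \eqref{L-thm-p}), since $\|f(\cdot,s)\|_{L^{\infty}(\mathbb{R})}$ is locally Lipschitz in $s$, Rademacher's theorem gives differentiability a.e., and the transport structure of \eqref{IVP-f} yields the Dini-type bound
\begin{equation*}
\frac{d}{ds}\|f(\cdot,s)\|_{L^{\infty}(\mathbb{R})}
\leq\mathrm{sgn}(f(y_\ast(s),s))\bigl(\partial_s+U\partial_y\bigr)f(y_\ast(s),s)
\end{equation*}
for a.e. $s\in[s_\natural,s_\ast]$. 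Using $\partial_y f(y_\ast(s),s)=0$ and \eqref{IVP-f}, the right-hand side equals
\begin{equation*}
-D(y_\ast,s)|f(y_\ast,s)|+\mathrm{sgn}(f(y_\ast,s))F(y_\ast,s)+\mathrm{sgn}(f(y_\ast,s))\int_{\mathbb{R}}f(y',s)K(y_\ast,s;y')\,dy'.
\end{equation*}

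The core estimate is now routine. Using \eqref{max_2_2}--\eqref{max_2_4} together with the bound $\|f(\cdot,s)\|_{L^\infty(\mathbb{R})}\le 2m_0$ on $[s_\natural,s_\ast]$, the above right-hand side is bounded above by
\begin{equation*}
-\lambda_D|f(y_\ast,s)|+F_0+\delta\,D(y_\ast,s)\cdot 2m_0
\leq -\lambda_D|f(y_\ast,s)|+F_0+2m_0\delta\,D(y_\ast,s).
\end{equation*}
At $s=s_\ast$ we have $|f(y_\ast(s_\ast),s_\ast)|=2m_0$, and using $D(y_\ast,s_\ast)\geq\lambda_D$ yields the upper bound $-2m_0\lambda_D(1-\delta)+F_0$, which is strictly negative by the standing assumption $m_0\lambda_D>F_0/(2-2\delta)$. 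Integrating the Dini inequality over $[s_\natural,s_\ast]$ and using the strict negativity in a neighborhood of $s_\ast$ (by continuity of the relevant quantities along $y_\ast$) produces $\|f(\cdot,s_\ast)\|_{L^\infty(\mathbb{R})}<\|f(\cdot,s_\natural)\|_{L^\infty(\mathbb{R})}+o(1)$, contradicting the choice of $s_\ast$ as the first contact time with $2m_0$.

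The main obstacle is the Dini-derivative step: one must justify that the supremum is genuinely attained at an interior finite point $y_\ast(s)$ where $\partial_y f$ vanishes, and then that the elementary inequality $\limsup_{h\to 0^+}h^{-1}[\|f(\cdot,s)\|_{L^\infty}-\|f(\cdot,s-h)\|_{L^\infty}]\leq\mathrm{sgn}(f(y_\ast,s))(\partial_s+U\partial_y)f(y_\ast,s)$ holds. This is handled exactly as in \eqref{AP_R1-p}--\eqref{wtP-thm-p}: one tests the norm against the shifted evaluation $|f(y_\ast(s)-hU(y_\ast(s),s),s-h)|$, which is $\leq\|f(\cdot,s-h)\|_{L^\infty}$ by definition of the supremum, then passes to the limit $h\to 0^+$ and invokes Rademacher's theorem to convert the one-sided Dini derivative into an a.e.\ classical derivative so that the integration argument is legitimate. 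The sign bookkeeping between the $f(y_\ast,s)>0$ and $f(y_\ast,s)<0$ cases is symmetric and poses no additional difficulty.
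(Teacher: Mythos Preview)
Your approach matches the paper's (which cites \cite{BKK} and employs exactly this Rademacher/first-contact-time technique in the proofs of Lemma~\ref{propW} and Lemma~\ref{mainprop_1-p}). There is, however, a slip in your core estimate: you replace $D(y_\ast,s)$ by $\lambda_D$ in the damping term $-D|f(y_\ast,s)|$ \emph{before} handling the kernel term $\delta D(y_\ast,s)\cdot 2m_0$. This leaves you with $-\lambda_D\cdot 2m_0 + 2m_0\,\delta\,D(y_\ast,s)$, and since $D$ is only bounded \emph{below} by $\lambda_D$, you cannot then reduce this to $-2m_0\lambda_D(1-\delta)$. The fix is to keep $D$ in both terms and use $|f(y_\ast,s)|=\|f(\cdot,s)\|_{L^\infty}$ first:
\[
-D(y_\ast,s)\|f\|_{L^\infty}+\delta\,D(y_\ast,s)\|f\|_{L^\infty}+F_0=-(1-\delta)D(y_\ast,s)\|f\|_{L^\infty}+F_0\le-(1-\delta)\lambda_D\|f\|_{L^\infty}+F_0,
\]
which is strictly negative once $\|f\|_{L^\infty}=2m_0$. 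Also, your threshold $\tfrac{3m_0+2m_0}{2}=\tfrac{5m_0}{2}$ for $s_\natural$ exceeds $2m_0$ and so cannot be attained before the first contact time $s_\ast$; you presumably intend something like $\tfrac{3m_0}{2}$. With these two corrections the argument goes through.
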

	For the proof, we refer to that of Lemma 6.6 in Appendix of \cite{BKK}. 
	
\subsection{Blow-up criterion}\label{BC-app}
Owing to \eqref{Phi_0_M}, there exist constants $m_1$ and $m_2$ (depending only on the initial energy $H(0)$) such that 
\begin{equation}\label{A3}
0 < m_1 \leq  e^\phi \leq m_2
\end{equation}
 as long as the smooth solution to \eqref{EP-p} exists. In this subsection, we show that if 
 \begin{equation}\label{A1}
\partial_x u_0(\alpha) \geq \textstyle\sqrt{2\rho_0(\alpha)-m_2+\frac{4\rho_0^2(\alpha)}{m_1}\left( \frac{m_2-m_1}{m_1} \right)} \quad \text{for some } \alpha\in\mathbb{R},
\end{equation}
or
 \begin{equation}\label{A2}
\partial_x u_0(\alpha) \leq -\sqrt{2\rho_0(\alpha)-m_1} \quad \text{for some } \alpha\in\mathbb{R},
\end{equation}
then the maximal existence time $T_\ast$ of the $C^1$ solution to \eqref{EP-p} is finite. 

Following the approach of \cite{BCK}, we derive the associated second order ODE. Then, we make use of the variation of constants formula  and conduct a direct comparison with the associated differential inequalities obtained from \eqref{A3}. The proof given below is simple and clearly indicates that one cannot   obtain a sharper blow-up criterion than \eqref{A1}--\eqref{A2} when relying solely on \eqref{A3} and the ODE comparison. We remark that  \eqref{A1} and \eqref{A2} are the same as the criterion given in \cite{CKKT} (Theorem 1.11), where the authors consider the first order system of   ODEs and employ the comparison argument using  \eqref{A3}.

Let us define the characteristic curve $x(\alpha,t)$ along $u \in C^1$ as follows:
\begin{equation}\label{CharODE}
\dot{x} = u(x(\alpha,t),t), \quad x(\alpha,0)=\alpha \in \mathbb{R}, \quad t \geq 0,
\end{equation}
where $\dot{•}:=d/dt$. Here, the initial position $\alpha$ is considered as a parameter. By differentiating \eqref{CharODE} in $\alpha$,  we obtain that
\begin{equation}\label{var_ODE}
\dot{w} = u_x(x(\alpha,t),t) w, \quad w (\alpha,0)=1, \quad t \geq 0,
\end{equation}
where $w=w(\alpha,t):= \textstyle{\frac{\partial x}{\partial \alpha}(\alpha,t)}$. Using \eqref{EP-p}, we see that  $w$ satisfies
\begin{equation}\label{wrho}
\rho (x(\alpha,t),t) w(\alpha,t)  = \rho_0(\alpha) 
\end{equation}
and 
\begin{equation}\label{2ndOrdODE}
\ddot{w} + e^{\phi(x(\alpha,t),t)} w = \rho_0(\alpha), \quad w(\alpha,0)=1 , \quad \dot{w}(\alpha,0) = u_{0x}(\alpha),
\end{equation} 
and furthermore, the following statements hold:
 \begin{enumerate}
 \item for each $\alpha \in \mathbb{R}$, 
\begin{equation*}
\left.
\begin{array}{lll}
\lim_{t \nearrow T_*} w(\alpha,t)=0 \quad  & \text{iff} \quad \lim_{t \nearrow T_*} \rho(x(\alpha,t),t) = +\infty &  \text{iff}  \quad \liminf_{t \nearrow T_\ast } u_x\left(x(\alpha,t),t \right) = -\infty,
\end{array} 
\right.
\end{equation*} 
\item if $\lim_{t \nearrow T_*} w(\alpha,t)=0$ holds for some  $\alpha\in\mathbb{R}$, then $u_x\left(x(\alpha,t),t \right) \approx (t-T_\ast)^{-1}$  for all $t<T_\ast$ sufficiently close to $T_\ast$.
\end{enumerate}
(See \cite{BCK},  pp.15-16.) Therefore, the problem boils down to finding the zeros of $w$ for $t\ge0$.

From \eqref{A3} and \eqref{2ndOrdODE}, we have $\ddot{w} + m_1 w - \rho_0  =: h_1 \leq 0$ and $\ddot{w} + m_2 w - \rho_0  =: h_2 \geq  0$.  Using the variation of constants formula and recalling $w(\alpha,0)=1$ (see \eqref{var_ODE}), we obtain that for $k=1,2$,
\begin{equation}\label{Appn1}
w(t) = \left( 1 - \frac{b}{m_k} \right)\cos(\sqrt{m_k} t) + \dot{w}_0 \frac{\sin(\sqrt{m_k} t)}{\sqrt{m_k}} + \frac{b}{m_k} +  \frac{1}{\sqrt{m_k}}\int_0^t \sin(\sqrt{m_k} t') h_k(t-t')\,dt', 
\end{equation}
where we let $w(t)=w(\alpha,t)$, $\dot{w}_0 =\dot{w}(\alpha,0)$, and $b=\rho_0(\alpha)$ for simplicity. We notice that  the integrand of \eqref{Appn1} with $k=1$ is nonpositive  on $t\in[0,\textstyle{\frac{\pi}{\sqrt{m_1}}}]$. Similarly, for $k=2$, it is  nonnegative on $t\in[0,\textstyle{\frac{\pi}{\sqrt{m_2}}}]$. 

We first present the result of \cite{BCK} (see Theorem 1.2 and Remark 3 of \cite{BCK}). If $2b \leq m_1$ (i.e., $2\rho_0(\alpha) \leq m_1$ for some $\alpha$), then putting $t=\textstyle{\frac{\pi}{\sqrt{m_1}}}$ into \eqref{Appn1} with $k=1$, we see that
\[
w\left(\textstyle{\frac{\pi}{\sqrt{m_1}}}\right) \leq -1+2\frac{b}{m_1} \leq 0.
\]
Since $w(0)=1$, by the intermediate value theorem, there is $T_\ast \in[0,\textstyle{\frac{\pi}{\sqrt{m_1}}}]$ such that $\textstyle\lim_{t\searrow T_\ast}w(t)=0$. 

In what follows, using this result, we show that \eqref{A1} and  \eqref{A2} are sufficient conditions for the finite time blow-up.  Since $h_1 \leq 0$ and $h_2 \geq 0$, making use of the trigonometric identity, we obtain from \eqref{Appn1} that
\begin{subequations}
			\begin{align}
				& w(t) \leq  \sqrt{\dot{w}_0^2/m_1 + \left(1-b/m_1 \right)^2}\cos(\sqrt{m_1}t - \theta) + \frac{b}{m_1} \quad \text{for } t\in[0,\textstyle{\frac{\pi}{\sqrt{m_1}}}], \label{Appn1_1}
				\\
				&  w(t) \geq  \sqrt{\dot{w}_0^2/m_2 + \left(1-b/m_2 \right)^2}\cos(\sqrt{m_2}t - \theta) + \frac{b}{m_2}  \quad \text{for } t\in[0,\textstyle{\frac{\pi}{\sqrt{m_2}}}], \label{Appn1_2}
			\end{align}
		\end{subequations} 
where $\theta$ satisfies
\[
\sin\theta = \frac{\dot{w}_0}{\sqrt{m_k}\sqrt{\dot{w}_0^2/m_k + \left(1-b/m_k \right)^2}} \quad \text{and} \quad \cos\theta = \frac{1-b/m_k}{\sqrt{\dot{w}_0^2/m_k + \left(1-b/m_k \right)^2}}.
\]

We consider the case $\dot{w}_0 > 0$. It is enough to show that $w(t_1)  \geq 2\rho_0(\alpha)/m_1$  for some $t_1>0$. Indeed, using \eqref{wrho}, we have $m_1 \geq 2\rho_0(\cdot)/w(t_1,\cdot)  = 2\rho(t_1,\cdot)$. If a smooth solution to \eqref{EP-p} exists on $[0,t_1]$, then we apply the same argument as above with $w(\alpha,0)=1$.
We have $\theta \in \textstyle [0,\pi]$ since $\dot{w}_0 > 0$ (i.e., $\partial_x u_0(\alpha)>0$  for some $\alpha$). Thus, $\sqrt{m_2}t-\theta \in \textstyle [-\pi,\pi]$ for $t\textstyle\in[0,\frac{\pi}{\sqrt{m_2}}]$. We notice that $\cos(\sqrt{m_2}t -\theta)$ achieves its maximum value at $t=\textstyle{\frac{\theta}{\sqrt{m_2}}}>0$, where the strict inequality holds due to  $\dot{w}_0>0$. Using \eqref{Appn1_2}, it is easy to check that  $\dot{w}_0 \geq \textstyle\sqrt{2b-m_2+\frac{4b^2}{m_1}\left( \frac{m_2-m_1}{m_1} \right)}$ (i.e., \eqref{A1}) implies that  there is $t_1>0$ such that  $w(t_1)  \geq 2\rho_0(\alpha)/m_1$.

Now, we consider the case $\dot{w}_0\leq 0$.  Since $\dot{w}_0\leq 0$ (i.e.,  $\partial_x u(\alpha,0) \leq 0$ for some $\alpha$), we have $\theta \in \textstyle [-\pi,0]$, and thus $\sqrt{m_1}t-\theta \in \textstyle[0,2\pi]$. It is straightforward to see from \eqref{Appn1_1} that  if $\dot{w}_0 \leq -\sqrt{2b-m_1}$ (i.e., \eqref{A2}) holds, then  $w \leq 0$ at some point on the interval $[0,\textstyle{\frac{\pi}{\sqrt{m_1}}}]$. Since $w(0)=1$, there is $T_\ast \in[0,\textstyle{\frac{\pi}{\sqrt{m_1}}}]$ such that $\textstyle\lim_{t\searrow T_\ast}w(t)=0$. We finish the proof.

\section*{Acknowledgments.}
J.B. was supported by the National Research Foundation of Korea grant funded by the Ministry of Science and ICT (NRF-2022R1C1C2005658). 
B.K. was supported by Basic Science Research Program through the National Research Foundation of Korea (NRF) funded by the Ministry of science, ICT and future planning (NRF-2020R1A2C1A01009184). 


\begin{thebibliography}{10}

\bibitem{BCK} Bae, J., Choi, J., Kwon, B.:   Formation of singularities in plasma ion dynamics, Nonlinearity 37 (2024) 045011
%

\bibitem{BKK} Bae, J., Kim, Y., Kwon, B.: Structure of singularities for the Euler-Poisson system of ion dynamics, preprint, (2024) arXiv:2405.02557



%
%

	
	
	
	%
	%
	%
	\bibitem{BSV}
	Buckmaster, T., Shkoller, S. and Vicol, V. (2022), Formation of Shocks for 2D Isentropic Compressible Euler. Comm. Pure Appl. Math., 75: 2069-2120. https://doi.org/10.1002/cpa.21956
	
	\bibitem{BSV2}  Buckmaster, T., Shkoller, S. and Vicol, V. (2023), Formation of Point Shocks for 3D Compressible Euler. Comm. Pure Appl. Math., 76: 2073-2191. https://doi.org/10.1002/cpa.22068
	
	\bibitem{Ch} Chen, F.F.: Introduction to plasma physics and controlled fusion. 2nd edition, Springer (1984)	
	
	\bibitem{CKKT} Choi, Y., Kim, D., Koo, D., Tadmor, E.: Critical thresholds in pressureless Euler-Poisson equations with background states. Preprint, 2024. arXiv:2402.12839
	
	%
	\bibitem{Dav} Davidson, R.C.: Methods in nonlinear plasma theory. 1st edition, Academic Press (1972)
	%
	%
	%
	%
	
	
	

       

		
	\bibitem{EF} Eggers, J., Fontelos, MA.: Singularities: Formation, Structure, and Propagation. Cambridge: Cambridge University Press; 2015. doi:10.1017/CBO9781316161692
	
	
	
	
	%
	%
	\bibitem{GP} Guo, Y.; Pausader, B.: Global Smooth Ion Dynamics in the Euler-Poisson System, Commun. Math. Phys. 303: 89 (2011) 
	%
	%
	%
	\bibitem{HS} Haragus, M., Scheel, A. : Linear stability and instability of ion-acoustic plasma solitary waves. Physica D \textbf{170}, 13-30 (2002)
	%
	%
	%
	%
	%
	%
	%
	%
	%
	
\bibitem{J} John, F.: Formation of singularities in one-dimensional nonlinear wave propagation, Comm. Pure Appl. Math.27, 377-405 (1974) 






 


	
	
	\bibitem{LLS}  Lannes, D., Linares, F. and  Saut, J.C.: The Cauchy problem for the Euler-Poisson system and derivation of the Zakharov-Kuznetsov equation, in: M. Cicognani, F. Colombini, D. Del Santo (Eds.), Studies in Phase Space Analysis with
	Applications to PDEs, in: Progr. Nonlinear Differential Equations Appl., vol. 84, Birkhäuser, pp. 183-215  (2013)
	
	
	\bibitem{L} Lax, P. D. Development of singularities of solutions of nonlinear hyperbolic partial differentialequations, J. Mathematical Phys.5 (1964), 611-613. doi:10.1063/1.1704154
	
	
	
	
	\bibitem{Liu} Liu, H.: Wave breaking in a class of nonlocal dispersive wave equations. Journal of Nonlinear Mathematical Physics Volume 13, Number 3, 441-466 (2006)
	
	
	

\bibitem{TL}  Liu, T. P.: Development of singularities in the nonlinear waves for quasilinear hyperbolic partial
differential equations. J. Differential Equations 33, no. 1, 92–111 (1979)



	




	
	
	
	
	
	%
	%
	%
	%
	%
	%
	
	
	
	
	
	
	
	
	
\end{thebibliography}
 \end{document}